%% file: main_ams.tex
\documentclass[11pt,reqno]{amsart}
\usepackage{graphicx} 
    \usepackage[utf8]{inputenc}
    \usepackage{tikz}
\usetikzlibrary{calc,arrows.meta,decorations.pathreplacing}
    \usepackage{amsxtra}
    \usepackage{amssymb}
    \usepackage[margin=1.5in]{geometry}
    \usepackage{comment}
    \usepackage[
    backend=biber,
    style=alphabetic,
    sorting=ynt
    ]{biblatex} 
    \usepackage{amscd}
    \usepackage{amsfonts}
    \usepackage[T1]{fontenc}
    \usepackage{mathtools}
    \usepackage{xparse}
    \usepackage{bbm}
    \usepackage{esint}
    \usepackage{physics}
    \usepackage{dirtytalk}
    
    \usepackage[colorlinks=true, pdfstartview=Fit, linkcolor=blue, citecolor=blue, urlcolor=blue,pagebackref=false]{hyperref}
    \usepackage{microtype}

    \usepackage{bm}
    \usepackage{scalerel} 
    \usepackage{dsfont}
    \usepackage{mathrsfs}

    \usepackage[font={footnotesize}]{caption}
    \usepackage[shortlabels]{enumitem}
    \usepackage{mdwlist}
    \usepackage{wasysym}
    \usepackage{marvosym}
    \newcommand{\SW}{SW}
    \newcommand{\MSW}{\tilde W}

\input{shortcuts}

    \newtheorem{condition}[thm]{Condition}
\hypersetup{pdftitle={Nearly Sharp Comparison Results for sliced and max-sliced Wasserstein distances}, pdfauthor={Jonathan Niles-Weed and Jacob Shkrob}}
\title{Nearly Sharp Comparison Results for sliced and max-sliced Wasserstein distances}
\author[J. Niles-Weed]{Jonathan Niles-Weed$^{1}$}
\author[J. Shkrob]{Jacob Shkrob$^{2}$}
\thanks{$^{1}$Department of Mathematics and Data Science, Courant Institute of Mathematical Sciences, New York
University. \texttt{jnw@cims.nyu.edu}}
\thanks{$^{2}$Courant Institute of Mathematical Sciences, New York University. \texttt{jas10184@nyu.edu}}
\begin{document}

\begin{abstract}
    We prove new comparison results between the Wasserstein distance and its sliced and max-sliced counterparts.
    First, we show that the H\"older exponent~$\frac{2}{d+2}$ obtained by Bobkov and G\"otze for the max-sliced 1-Wasserstein distance on the unit ball is optimal for every $d \geq 2$, settling a question raised in their work.
    Second, we show that sharper comparisons are possible under stronger structural assumptions: if $\nu$ is a discrete measure and the optimal coupling between $\mu$ and $\nu$ transports each point to a nearest atom of $\nu$, then $W_p(\mu, \nu) \leq C \sqrt{d}\, K \, \SW_{p,1}(\mu, \nu)$ for a universal constant $C$, where the complexity parameter $K$ is always at most the number of atoms $N$ and can be substantially smaller.
    This complements a similar bound due to Park and Slepčev.
    An analogous bound holds for the sliced Wasserstein distance based on $k$-dimensional projections.
    Finally, using a construction from geometric discrepancy theory due to Chen and Travaglini, we prove that the linear dependence on $K$ in this bound cannot be improved, up to polylogarithmic factors.
\end{abstract}

\maketitle

\section{Introduction} 
Wasserstein metrics are an important notion of distance between probability distributions, but statistical and computational burdens associated with their use in applications have led to the search for more tractable alternatives.
A prominent option, whose study was initiated by \cite{rabin2011wasserstein, bonneel2015sliced} and \cite{bonnotte2013unidimensional}, is the \textit{sliced Wasserstein distance}, obtained by computing the average Wasserstein distance of one-dimensional projections. For $p \geq 1$ and probability measures $\mu$ and $\nu$ on $\R^d$ with finite $p$th moments, recall that the standard $p$-Wasserstein distance is defined by
\begin{equation}
	W_p(\mu, \nu) := \brac{\inf_{\gamma \in \Gamma(\mu, \nu)} \int_{\R^d \times \R^d} \norm{x - y}^p \dd\gamma(x,y)}^{1/p}\,,
\end{equation}
where $\Gamma(\mu, \nu)$ denotes the set of couplings of $\mu$ and $\nu$, i.e., probability measures on $\R^d \times \R^d$ with marginals $\mu$ and $\nu$.
The sliced Wasserstein distance is given by
\begin{equation}
	\SW_{p,1}(\mu,\nu) := \brac{\int_{\mathbb{S}^{d-1}} W_p^p(\theta_{\#}\mu, \theta_{\#}\nu)\,\,d\sigma_{d}(\theta)}^{1/p},
\end{equation}
where $\theta_\#\mu$ denotes the pushforward of $\mu$ under the map $x \mapsto \abrac{\theta, x}$ and $\sigma_{d}$ is the uniform measure on the sphere $\mathbb{S}^{d-1}$.
Later, applications in statistics and machine learning \cite{kolouri2019generalized, liutkus2019sliced, bonet2022efficient, manole2022minimax, doss2023optimal, rousseau2024wasserstein} suggested studying the \emph{max-sliced Wasserstein distance} \cite{deshpande2019max, paty2019subspace}, with averaging replaced by maximization:
 \begin{equation}
 	\MSW_{p,1}(\mu,\nu) := \max_{\theta \in \mathbb{S}^{d-1}}W_p(\theta_\#\mu,\theta_\# \nu)\,.
 \end{equation}

How do these distances relate to the standard Wasserstein distance?
It is clear that
\begin{equation}\label{eq:trivial_direction}
	\SW_{p, 1}(\mu, \nu) \leq \MSW_{p, 1}(\mu, \nu) \leq W_p(\mu, \nu)
\end{equation}
for all $\mu$ and $\nu$.
In light of~\eqref{eq:trivial_direction}, the key question is to obtain \textit{upper} bounds for $W_p(\mu, \nu)$ in terms of the sliced Wasserstein distance or its max-sliced counterpart.

Simple examples show that $\SW$, $\MSW$, and $W$ are not bi-Lipschitz equivalent, even when restricted to probability measures on a compact subset of $\R^d$ \cite[see, e.g.,][]{bayraktar2021strong, bayraktar2025erratum, park2025geometry, kitagawa2023sliced}.
It is therefore natural to seek quantitative comparisons of H\"older type, that is, bounds of the form
\begin{align}
	W_{p}(\mu, \nu) & \leq C_{p, d, R} \SW_{p, 1}^\alpha(\mu, \nu) \label{sw_comp}\\
	W_{p}(\mu, \nu) & \leq C_{p, d, R}' \MSW_{p, 1}^\beta(\mu, \nu) \label{msw_comp}
\end{align}
for all $\mu, \nu \in \cP(B_R)$, for some $\alpha, \beta > 0$.

The most substantial progress on obtaining such bounds is for $p = 1$, and we focus on this case in the subsequent discussion.
Bonnotte~\cite{bonnotte2013unidimensional} proved the inequality~\eqref{sw_comp} with $\alpha = \tfrac{1}{d+1}$, which stood for over a decade; surprisingly, Carlier, Figalli, M\'erigot, and Wang \cite{carlier2025sharp} very recently showed that~\eqref{sw_comp} holds with $\alpha = \tfrac 1d$ for any $d \geq 3$, and that this is optimal.
These questions have a close connection to geometric discrepancy theory: for one-dimensional measures, the Wasserstein distance agrees with the $L^1$ distance between cumulative distribution functions, so bounds on the average half-space discrepancy of point sets~\cite{chen20101} translate directly into bounds on the sliced Wasserstein distance.
We exploit this connection in the proof of Theorem~\ref{thm:discrete_lb} below.

The study of inequality~\eqref{msw_comp} goes back much further than the introduction of the max-sliced Wasserstein distance.
Motivated by the classical Cram\'er--Wold theorem,  Hahn and Quinto~\cite{hahn1985distances} showed that~\eqref{msw_comp} holds with $\beta = 1/\lceil\tfrac{d+3}{2}\rceil$, i.e., $\beta = \tfrac{2}{d+3}$ for odd $d$ and $\beta = \tfrac{2}{d+4}$ for even $d$.\footnote{Their result is stated for the bounded-Lipschitz metric, which is equivalent to $W_1$ on compactly supported measures up to constants depending on the support radius.}
More recently, Bobkov and G\"otze \cite{bobkov2024quantified} proved a stronger bound with $\beta = \tfrac{2}{d+2}$; they also obtain extensions for non-compactly-supported measures.
A simple argument \cite[Section 2]{bobkov2024quantified} shows that~\eqref{msw_comp} can only hold for $\beta \leq \tfrac 2 d$, but, unlike the case of the sliced Wasserstein distance, the sharpest version of~\eqref{msw_comp} was not known.

Our first main result is to show that Bobkov and G\"otze's $\beta = \tfrac{2}{d+2}$ bound is unimprovable.

\begin{thm}\label{thm: sharp exp}
	Let $d \geq 2$. There exist constants $C_d, \varrho_d > 0$ such that, for any $\varrho \in (0, \varrho_d]$, there exist $\mu, \nu \in \cP(B_1)$ satisfying
	\begin{align*}
		W_1(\mu, \nu) & \geq \varrho\\
		\MSW_{1, 1}(\mu, \nu) & \leq C_d \varrho^{1 + d/2}\,.
	\end{align*}
	In particular, for $p = 1$, \eqref{msw_comp} can only hold with $\beta \leq \tfrac{2}{d+2}$.
\end{thm}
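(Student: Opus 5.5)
The plan is to build $\mu$ and $\nu$ so that the signed measure $f=\mu-\nu$ oscillates at spatial scale $\varrho$. That oscillation forces $W_1(\mu,\nu)\gtrsim\varrho$, and the goal is to arrange cancellations so that every one-dimensional projection of $f$ is much smaller than $\varrho$. I would work in Fourier variables. For $\theta\in\mathbb S^{d-1}$ let $p_\theta=\theta_\#\mu-\theta_\#\nu$ and let $F_\theta$ be its cumulative distribution function. Then $\widehat{p_\theta}(s)=\hat f(s\theta)$ and $\widehat{F_\theta}(s)=\hat f(s\theta)/(is)$. Since everything lives in $B_1$, $F_\theta$ is supported in $[-1,1]$, so
\begin{equation*}
\MSW_{1,1}(\mu,\nu)=\max_\theta\int|F_\theta|\le\sqrt2\,\max_\theta\Bigl(\int_{\R}\frac{|\hat f(s\theta)|^2}{s^2}\,ds\Bigr)^{1/2}.
\end{equation*}
The theorem therefore reduces to two estimates. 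The first is the upper bound $\sup_\theta\int|\hat f(s\theta)|^2s^{-2}\,ds\lesssim\varrho^{d+2}$. The second is the lower bound $W_1\ge\varrho$, which I would certify by exhibiting a $1$-Lipschitz test function $h$ with $\int h\,df\gtrsim\varrho$; the natural choice is $h$ adapted to the oscillation of $f$, with amplitude $\varrho$.

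For the construction, I would tile a fixed small ball by cells of side $\varrho$, about $\varrho^{-d}$ of them. In each cell I would place a small signed piece $g_i$ of total mass $\asymp\varrho^d$ and vanishing mean. Splitting $f=\sum_i g_i$ into positive and negative parts and normalizing then gives $\mu,\nu\in\cP(B_1)$, and $W_1\gtrsim\varrho$ follows directly from the test function $h$ above.

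The key choice is the sign and shape pattern of the pieces, and I expect this step to be the main obstacle. Two natural choices fall short:
\begin{itemize}
\item Independent random signs give $\mathbb E\,|\hat f(\xi)|^2=\sum_i|\hat g_i(\xi)|^2\approx\varrho^{d}$ near $|\xi|\sim\varrho^{-1}$. This yields only $\int|F_\theta|^2\approx\varrho^{d+1}$, which is the Hahn--Quinto/Bobkov--G\"otze-type exponent $\frac{d+1}{2}$ rather than $\frac{d+2}{2}$.
\item A radial oscillation such as $\cos(|x|/\varrho)\phi(x)$, analysed by stationary phase or by the Abel transform, gives the same loss.
\end{itemize}
Gaining the missing factor $\varrho^{1/2}$ therefore requires cancellation that is correlated across cells rather than independent. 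Bounded densities cannot produce it, because Plancherel then caps the ray energies. So the pieces must be concentrated: I would let $\nu$ be (nearly) atomic, so that $\hat f$ is not controlled by $\|f\|_2$. The Fourier mass of $f$ at frequency $\varrho^{-1}$ should then be arranged to avoid every ray through the origin as much as possible. What I would try first is to choose the signs by a lattice character, so that $\hat f$ concentrates near the dual-lattice points $k/\varrho$ with $k\in\Z^d$. I would then make the pieces have vanishing moments up to a fixed high order, which kills the low-$|k|$ coefficients, and make them smooth, so that $\hat g$ decays rapidly in $|k|$. The remaining task is to count, uniformly over $\theta$, how many of the dual-lattice peaks a single ray $\{s\theta\}$ can pass near, and to show that their combined contribution to $\int|\hat f(s\theta)|^2s^{-2}\,ds$ is at most $\varrho^{d+2}$.

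Once the ray estimate holds uniformly in $\theta$, the displayed inequality gives $\MSW_{1,1}\le C_d\varrho^{1+d/2}$, while the test function gives $W_1\ge\varrho$ after fixing constants with $\varrho\le\varrho_d$. The final sentence of the theorem then follows by letting $\varrho\to0$ in~\eqref{msw_comp}: that inequality would force $\varrho\le C'C_d^\beta\varrho^{\beta(1+d/2)}$ for all small $\varrho$, which is only possible if $\beta\le\frac{2}{d+2}$.
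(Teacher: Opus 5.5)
There is a genuine gap, and it is more than the ray-counting step being left open. The estimate you reduce to, $\sup_\theta\int|\hat f(s\theta)|^2s^{-2}\,ds\lesssim\varrho^{d+2}$, cannot hold for any pair in $\mathcal{P}(B_1)$ with $W_1(\mu,\nu)\ge\varrho$. This is true whatever sign pattern, moment conditions, or atoms you use.

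To see this, write $f=\mu-\nu$ and average your Plancherel identity over $\theta$ in polar coordinates. This gives the energy distance:
\begin{equation*}
\int_{\mathbb{S}^{d-1}}\|F_\theta\|_{L^2}^2\,d\sigma_d(\theta)=c_d\int_{\mathbb{R}^d}\frac{|\hat f(\xi)|^2}{|\xi|^{d+1}}\,d\xi .
\end{equation*}
Now mollify $\mu,\nu$ with $\phi_\eta=\eta^{-d}\phi(\cdot/\eta)$, $\phi\in C_c^\infty(B_1)$; this costs $\lesssim\eta$ in $W_1$. Test the mollified difference against a $1$-Lipschitz function with $h(0)=0$, cut off outside $B_2$. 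Parseval, Cauchy--Schwarz and $|\xi|^{d+1}|\hat\phi(\eta\xi)|^2\lesssim\eta^{-(d-1)}|\xi|^2$ then give
\begin{equation*}
W_1(\mu,\nu)\lesssim_d \eta+\eta^{-(d-1)/2}\Big(\int_{\mathbb{R}^d}\frac{|\hat f(\xi)|^2}{|\xi|^{d+1}}\,d\xi\Big)^{1/2},
\qquad\text{hence}\qquad
W_1(\mu,\nu)\lesssim_d\Big(\sup_\theta\|F_\theta\|_{L^2}\Big)^{2/(d+1)} .
\end{equation*}
So $W_1\ge\varrho$ forces $\sup_\theta\int|\hat f(s\theta)|^2s^{-2}\,ds\gtrsim_d\varrho^{d+1}$. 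Your random-sign computation is therefore already optimal for the $L^2$ quantity. No correlated cancellation, vanishing moments or atoms can recover the missing $\varrho^{1/2}$ within this framework.

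Lattice sign patterns fail even more visibly. The line through a dual-lattice peak sees it at full height, so along short dual-lattice directions the projection keeps the whole periodic profile. Compare the lattice pair of Section~\ref{sec:lb}: its projections onto the coordinate axes are at distance $h/4$, while $W_1\le\sqrt d\,h/2$.

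The loss is in the Cauchy--Schwarz step $\int|F_\theta|\le\sqrt2\,\|F_\theta\|_{L^2}$, which is efficient only when $F_\theta$ is spread over $[-1,1]$. In an extremal example it is localized. In the paper's construction, take $\theta$ equal to one of the $m\asymp\epsilon^{-(d-1)}$ normals $u_i$. The $i$th pancake contributes to $F_\theta$ a bump of height $\asymp1/m$ on a window of length $\asymp\rho$. That bump has $L^1$ mass $\asymp\rho/m\asymp\rho^{1+d/2}$ but squared $L^2$ mass $\asymp\rho/m^2\asymp\rho^{d+1}$.

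This is why the paper never passes to $L^2$. It proceeds in three steps:
\begin{itemize}
\item It bounds $\|\theta_\#(\lambda^{(\rho)}_{(u,z,r)}-\lambda_{(u,z,r)})\|_{\mathrm{KR}}$ directly (Lemma~\ref{lem: upper w_1}). It uses the one-dimensional $L^1$ formula and a Taylor expansion of the smooth projected CDF against the weight $g$, which has $d+1$ vanishing moments.
\item It sums over an $\epsilon$-packing of $\mathbb{R}\mathbb{P}^{d-1}$ by dyadic angular annuli, giving $\MSW_{1,1}\lesssim r\epsilon/m$.
\item It uses random centers with $m\rho r^{d-1}\lesssim1$, together with the test function $d(x,\bigcup_jP_j)$, to keep $W_1\gtrsim\rho$.
\end{itemize}
To rescue your plan you would have to abandon the $L^2$/Fourier reduction and estimate $\|F_\theta\|_{L^1}$ directly, exploiting this kind of localization of the visible perturbations.
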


Together with prior work, Theorem~\ref{thm: sharp exp} completes the task of finding the best exponents in~\eqref{sw_comp} and~\eqref{msw_comp} when $p = 1$.\footnote{We note that for $p > 1$, the optimal exponents in~\eqref{sw_comp} and~\eqref{msw_comp} remain, in the words of \cite{carlier2025sharp}, ``widely open,'' though several, likely non-sharp, bounds appear in the literature~\cite{terzioglu2026stability,bonnotte2013unidimensional}.}
In both cases, there is a significant gap between the Wasserstein distance and its sliced or max-sliced counterparts which manifests as a ``curse of dimensionality'': as the dimension grows, both the sliced and max-sliced Wasserstein distances are exponentially coarser than the standard Wasserstein distance.
An analogous phenomenon is well known in statistical contexts: the empirical convergence rates of Wasserstein distances suffer from the curse of dimensionality~\cite{chewi2025statistical}, whereas the convergence rate of the max-sliced distance is entirely dimension-free \cite{boedihardjo2025sharp}.

This raises the question of identifying structural assumptions on the measures under which substantially tighter comparisons between $W$ and $\SW$ or $\MSW$ are possible. As a simple example,  Doss,  Wu,  Yang, and Zhou~\cite[Lemma 3.1]{doss2023optimal} show that if $\mu$ and $\nu$ are both supported on at most $N$ atoms, then
\begin{equation}\label{eq:doss}
	W_1(\mu, \nu) \leq \sqrt d N^2 \MSW_{1,1}(\mu, \nu)\,.
\end{equation}
Unlike the general H\"older-type bounds in~\eqref{sw_comp} and~\eqref{msw_comp}, this result gives a Lipschitz-type upper bound on $W_1$, at the price of a multiplicative constant that depends on the support of $\mu$ and $\nu$.

More recently, Park and Slepčev showed \cite{park2025geometry} that $\SW_{2,1}$ and $W_2$ are \textit{locally} comparable in the neighborhood of a discrete measure with respect to the $\infty$-Wasserstein distance \cite{champion2008infinity}, defined by
\begin{equation}\label{eq:infty_def}
	W_\infty(\mu, \nu) := \inf_{\gamma \in \Gamma(\mu, \nu)} \sup_{(x,y) \in \operatorname{supp}(\gamma)} \|x - y\|\,.
\end{equation}
They show that given  a finitely supported probability measure $\nu$ supported on $N$ atoms, each of which is at distance at least $\delta_\nu$ from the others, if $W_\infty(\mu, \nu) < \tfrac{\delta_\nu}{4 C_d N}$, then
\begin{equation}\label{eq:park_slepcev}
	    W_2(\mu,\nu) \leq \sqrt{d}(1 + 4C_dN\delta_\nu^{-1}W_{\infty}(\mu,\nu))^{1/2}SW_{2,1}(\mu,\nu)\,,
\end{equation}
where $C_d \geq 1$ is a constant depending only on the dimension.
Like~\eqref{eq:doss}, this bound still requires that $\nu$ be finitely supported, but obtains a Lipschitz-type comparison under less stringent conditions on $\mu$.

To clarify the scope of Park and Slepčev's bound, we first investigate the assumption that $W_\infty(\mu, \nu)$ is small.
Their requirement that $W_\infty(\mu, \nu) < \tfrac{\delta_\nu}{4 C_d N}$ implies in particular that
the following condition holds.
\begin{condition}\label{cond:voronoi}
	Let $\cY$ be the support of $\nu$.
	There exists a coupling $\gamma \in \Gamma(\mu, \nu)$ under which
	\begin{equation}\label{eq:voronoi}
		\|x - y\| = d(x, \cY) := \min_{y \in \cY} \|x - y\|\,,  \quad \text{for $\gamma$-a.e. $(x, y)$.}
	\end{equation}
\end{condition}
Indeed, the definition of $\delta_\nu$ implies that an open ball of radius $\delta_\nu/2$ can contain at most one element of $\cY$, so if $\|x - y\| < \tfrac{\delta_\nu}{2}$ for some $y \in \cY$, then $y$ is the closest point in $\cY$ to $x$.
Therefore, if $\gamma$ is any optimal coupling in~\eqref{eq:infty_def}\footnote{Such couplings exist by \cite[Section 2]{champion2008infinity}} and $W_\infty(\mu, \nu) < \tfrac{\delta_\nu}{4 C_d N}$, then $\mu$-a.e.\ $x$ is matched to $y \in \cY$ satisfying $\|x - y\| < \tfrac{\delta_\nu}{4 C_d N} < \tfrac{\delta_\nu}{2}$ and hence Condition~\ref{cond:voronoi} holds.

Condition~\ref{cond:voronoi} says that the mass of $\mu$ can be matched entirely within the Voronoi cells induced by $\cY$.
It is easy to see (Lemma~\ref{lem:voronoi_reduction}) that the resulting coupling is automatically optimal for all $p \geq 1$, and therefore evaluating $W_p(\mu, \nu)$ particularly simple.

Our second main result extends Park and Slepčev's bound to show that $W_p$ and $\SW_{p,1}$ are comparable whenever Condition~\ref{cond:voronoi}  holds, with a constant that depends on the complexity of $\nu$ relative to $\mu$.

Given a probability measure $\mu$ and a probability measure $\nu$ supported on a countable closed set $\cY$, define
\begin{equation}\label{eq:k_def}
	K_{\mu,\nu} := \operatorname*{ess\,sup}_{\substack{x \sim \mu \\ x \notin \cY}} \, d(x, \cY) \sum_{y \in \cY} \frac{1}{\|x - y\|}\,,
\end{equation}
where we interpret $K_{\mu, \nu} := 1$ if $\mu(\cY) = 1$.
Note that $K_{\mu, \nu} \geq 1$, since for $x \notin \cY$ the nearest atom alone contributes $1$ to the sum, and that $K_{\mu,\nu}$ depends on $\nu$ only through its support.
Since $d(x, \cY) \leq \|x - y\|$ for every $y \in \cY$, we always have $K_{\mu, \nu} \leq |\cY|$, but $K_{\mu,\nu}$ can be smaller if the points in $\cY$ are well spread relative to the support of $\mu$, see, for example, the construction presented in Section~\ref{sec:lb}.
We prove the following.

\begin{thm}\label{thm:discrete_bound}
	Let $p \geq 1$ and $d \geq 2$, let $\mu \in \cP_p(\R^d)$, and let $\nu$ be a probability measure supported on a countable closed set $\cY$, with $K_{\mu,\nu}$ as in~\eqref{eq:k_def}.
	If Condition~\ref{cond:voronoi} holds, then
	\begin{equation}\label{eq:discrete_bound}
		W_p(\mu, \nu) \leq C \sqrt{d}\, K_{\mu,\nu}\, \SW_{p,1}(\mu, \nu)\,,
	\end{equation}
	where $C$ is a universal constant.
\end{thm}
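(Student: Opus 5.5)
The strategy is to bound $\SW_{p,1}$ from below pointwise in $x$ via a union bound over the atoms, and $W_p$ from above using the coupling in Condition~\ref{cond:voronoi}. For the upper bound, the coupling $\gamma$ from Condition~\ref{cond:voronoi} gives directly
\begin{equation*}
W_p^p(\mu,\nu) \le \int \|x-y\|^p \, d\gamma(x,y) = \int d(x,\cY)^p \, d\mu(x),
\end{equation*}
and Lemma~\ref{lem:voronoi_reduction} even gives equality, though only the inequality is needed. It therefore suffices to show
\begin{equation*}
\SW_{p,1}^p(\mu,\nu) \ge c\,\bigl(\sqrt d\, K_{\mu,\nu}\bigr)^{-p} \int d(x,\cY)^p \, d\mu(x)
\end{equation*}
for a universal $c>0$. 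The constant only enters through $c^{-1/p}\le c^{-1}$ for $c\le 1$, so it stays universal in $p$.

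For the lower bound on each slice, fix $\theta$. The measure $\theta_\#\nu$ is supported on the set $\theta(\cY)=\{\langle\theta,y\rangle : y\in\cY\}$. Hence any coupling of $\theta_\#\mu$ and $\theta_\#\nu$ moves the point $\langle \theta,x\rangle$ a distance at least $\inf_{y\in\cY}|\langle\theta,x-y\rangle|$. This gives
\begin{equation*}
W_p^p(\theta_\#\mu,\theta_\#\nu)\ge \int \inf_{y\in\cY}|\langle\theta,x-y\rangle|^p\,d\mu(x).
\end{equation*}
Integrating over $\theta$ and applying Fubini reduces everything to a pointwise estimate. For $\mu$-a.e.\ $x\notin\cY$ we want
\begin{equation*}
\int_{\mathbb S^{d-1}} \inf_{y\in\cY}|\langle\theta,x-y\rangle|^p \, d\sigma_d(\theta) \ge c\,\Bigl(\frac{d(x,\cY)}{\sqrt d\,K_{\mu,\nu}}\Bigr)^p .
\end{equation*}
Points $x\in\cY$ contribute zero to both sides, so they can be ignored.

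The pointwise estimate, which is the heart of the proof, comes from an anti-concentration bound plus a union bound. For a fixed vector $v\neq0$, the quantity $\langle\theta,v\rangle/\|v\|$ has the law of one coordinate of a uniform point on the sphere. Its density is $c_d(1-t^2)^{(d-3)/2}$, which is bounded by $C_0\sqrt d$ on $|t|\le 1/2$ for all $d\ge2$; the case $d=2$ needs $|t|$ bounded away from $1$, which is why we restrict to $|t|\le1/2$. Hence for $s\le \|v\|/2$,
\begin{equation*}
\sigma_d\bigl(|\langle\theta,v\rangle|\le s\bigr)\le 2C_0\sqrt d\, \frac{s}{\|v\|}.
\end{equation*}
Apply this to each $v=x-y$ with $y\in\cY$ and take a countable union bound:
\begin{equation*}
\sigma_d\Bigl(\inf_y|\langle\theta,x-y\rangle|\le s\Bigr)\le 2C_0\sqrt d\, s\sum_y \frac1{\|x-y\|}\le 2C_0\sqrt d\, s\,\frac{K_{\mu,\nu}}{d(x,\cY)}.
\end{equation*}
Now choose $s = d(x,\cY)/(4C_0\sqrt d K_{\mu,\nu})$. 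This is at most $d(x,\cY)/2\le\|x-y\|/2$ for every $y$, since $K_{\mu,\nu}\ge1$ and we may take $C_0\ge1$, so the density bound applies. With this choice the probability above is at most $1/2$, and Markov's inequality gives that the $\theta$-integral is at least $s^p/2$. Combining the three displays yields
\begin{equation*}
W_p\le 2^{1/p}\cdot 4C_0\sqrt d\,K_{\mu,\nu}\,\SW_{p,1},
\end{equation*}
which is the claim with $C=8C_0$.

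The step requiring the most care is the uniform-in-$d$ density bound $C_0\sqrt d$, including the edge case $d=2$. A secondary point is the measurability of $\theta\mapsto\inf_y|\langle\theta,x-y\rangle|$ over the countable closed set $\cY$, which is needed for Fubini. Both are routine: the normalizing constant satisfies $c_d=\Gamma(d/2)/(\sqrt\pi\,\Gamma((d-1)/2))\lesssim\sqrt d$, and an infimum of countably many continuous functions is measurable.
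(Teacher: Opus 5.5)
Your proof is correct and follows the paper's argument, which is the $k=1$ case of the proof of Theorem~\ref{thm:kdim_bound}: the closed form for $W_p$ under Condition~\ref{cond:voronoi}, the per-slice lower bound from the fact that $\theta_\#\nu$ is concentrated on the projected atoms, anti-concentration plus a union bound over $\cY$, and a threshold chosen so the bad event has probability at most $\tfrac12$. The only difference is cosmetic: you get anti-concentration from the bounded density of a single spherical coordinate on $|t|\le \tfrac12$, whereas the paper uses the $\mathrm{Beta}(k/2,(d-k)/2)$ law of $\|Ue_1\|^2$ and Gamma-function bounds so as to cover general $k$.
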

Compared with the bound appearing in \cite{park2025geometry}, Theorem~\ref{thm:discrete_bound} holds under more general conditions on $\mu$ and $\nu$.
On the other hand, \eqref{eq:park_slepcev} becomes sharper as $W_\infty(\mu, \nu)$ shrinks, whereas the multiplicative constant in~\eqref{eq:discrete_bound} does not improve as $\mu$ approaches $\nu$.
In particular, in the local regime studied by Park and Slepčev, their bound improves on Theorem~\ref{thm:discrete_bound} by a factor of $K_{\mu, \nu}$.
Since $\SW_{p,1} \leq \MSW_{p,1}$, the bound~\eqref{eq:discrete_bound} also holds with the max-sliced distance in place of the sliced distance.

Towards establishing the tightness of Theorem~\ref{thm:discrete_bound}, we prove the following result, showing by example that the dependence on $K_{\mu,\nu}$ in~\eqref{eq:discrete_bound} is unavoidable up to polylogarithmic factors.

\begin{thm}\label{thm:discrete_lb}
	Let $d \geq 2$. For any $K \geq 1$, there exist a finitely supported probability measure $\nu$ and a probability measure $\mu$ on $\R^d$ with $K_{\mu,\nu} \asymp_d K$ such that Condition~\ref{cond:voronoi}  holds and
	\begin{equation}\label{eq:discrete_lb}
		W_1(\mu, \nu) \geq c_{d} \frac{K}{(\log (K+1))^d} \SW_{1,1}(\mu, \nu)\,.
	\end{equation}
\end{thm}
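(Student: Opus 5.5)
We will build the example from a point set with small average half-space discrepancy, namely the Chen--Travaglini construction mentioned in the introduction. Let $\cY=\{y_1,\dots,y_N\}\subset[0,1]^d$ be a set of $N$ points whose $L^1$ half-space discrepancy, averaged over directions $\theta\in\mathbb S^{d-1}$ and thresholds $t$, is $O_d\bigl((\log N)^{d}\bigr)$ in unnormalized counting terms. Suitably rescaled, such a set is also well spread, with separation $\gtrsim_d N^{-1/d}$ and covering radius $\lesssim_d N^{-1/d}$. We take $\nu=\frac1N\sum_i\delta_{y_i}$. For $\mu$, we place around each atom $y_i$ a small copy of a fixed radial profile, $\mu=\frac1N\sum_i (\tau_{y_i})_\#\eta_r$, where $\eta_r$ is (say) uniform on the sphere or ball of radius $r=cN^{-1/d}$, with $c$ small enough that these balls lie inside the Voronoi cells. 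Condition~\ref{cond:voronoi} then holds via the coupling sending each small ball to its center, and by Lemma~\ref{lem:voronoi_reduction} this coupling is optimal. Hence $W_1(\mu,\nu)\asymp r\asymp N^{-1/d}$.

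The next step is to compute $K_{\mu,\nu}$. For $x$ in the ball around $y_i$ we have $d(x,\cY)\asymp r$, and by separation the sum $\sum_j\|x-y_j\|^{-1}$ is comparable to $N^{1/d}\sum_{k\le N^{1/d}} k^{d-1}/k\asymp N^{1/d}\cdot N^{(d-1)/d}$, using shells containing $\asymp k^{d-1}$ points at distance $\asymp kN^{-1/d}$. This gives $K_{\mu,\nu}\asymp_d r\,N\asymp_d N^{1-1/d}$. Given $K$, we choose $N\asymp K^{d/(d-1)}$, which makes $\log N\asymp_d\log(K+1)$.

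The remaining task is to bound $\SW_{1,1}(\mu,\nu)$, which we do via the one-dimensional identity $W_1(\theta_\#\mu,\theta_\#\nu)=\int|F_{\theta\#\mu}(t)-F_{\theta\#\nu}(t)|\,dt$. Write $\mu=\nu*\eta_r$. Then $F_{\theta\#\mu}=F_{\theta\#\nu}*\rho_r$, where $\rho_r$ is the one-dimensional marginal of $\eta_r$, which is symmetric and has width $r$. We compare both CDFs with the CDF $G_\theta$ of the projected Lebesgue (uniform) measure $\lambda$ on the cube. The smoothing satisfies $\|F_{\theta\#\mu}-G_\theta*\rho_r\|_{L^1}\le\|F_{\theta\#\nu}-G_\theta\|_{L^1}$, and $\|G_\theta*\rho_r-G_\theta\|_{L^1}$ is a boundary effect of size about $r^2$, because $G_\theta$ is piecewise smooth and $\rho_r$ is symmetric. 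This boundary term is harmful, so instead we should use the discrepancy structure directly: we can instead take the torus-periodic construction, or extend the point set to a larger region so that boundary effects are negligible. Both comparisons are then controlled by the averaged half-space discrepancy, namely $\int\int|F_{\theta\#\nu}-G_\theta|\,dt\,d\sigma\lesssim_d N^{-1}(\log N)^d$. Summing, we get $\SW_{1,1}(\mu,\nu)\lesssim_d N^{-1}(\log N)^d$, and therefore
\[
\frac{W_1}{\SW_{1,1}}\gtrsim_d \frac{N^{1-1/d}}{(\log N)^d}\asymp_d\frac{K}{(\log(K+1))^d}.
\]

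The main obstacle is this last step. We need a Chen--Travaglini-type set whose average $L^1$ half-space discrepancy is polylogarithmic, uniformly over all directions and thresholds, and which is simultaneously well separated, so that the bound $K\lesssim N^{1-1/d}$ holds. We also need the boundary contributions of the cube to cancel between $\mu$ and $\nu$ rather than to $\lambda$. We would first try to avoid comparing to $\lambda$ at all: estimate $F_{\theta\#\nu}*\rho_r-F_{\theta\#\nu}$ directly as an average, over shifts $s\in[-r,r]$, of differences of discrepancies at $t$ and $t+s$. The boundary terms of $G_\theta$ then appear only through the second-order term $G_\theta*\rho_r-G_\theta$, which is $O(r^2)=O(N^{-2/d})$. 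This is a problem for $d\ge2$, since $N^{-2/d}\ge N^{-1}$. To remove it, we would use a $\mathbb Z^d$-periodic construction restricted to a large ball, or a smooth cutoff at scale $1$ applied to both measures, so that the curvature terms vanish to higher order.
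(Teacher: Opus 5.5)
Your proposal has a genuine gap exactly where you locate it, and the repairs you suggest cannot close it. The term $\tfrac{\sigma^2}{2}g_\theta'$ in $G_\theta*\rho_r-G_\theta$ is not a boundary artifact. It is the effect of diffusing any probability density, and neither periodization nor a smooth cutoff makes $g_\theta'$ vanish.

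In fact the obstruction is unconditional. Let $Y\sim\nu$, and given $Y$ let $X$ be distributed as the part of $\mu$ transported to $Y$. Suppose $\mathbb{E}[X-Y\mid Y]=0$, $\|X-Y\|\le r'$ and $\|Y\|\le R$; your $\mu=\nu*\eta_r$ with radial $\eta_r$ is of this form. Set $S=\langle\theta,Y\rangle$ and $Z=\langle\theta,X-Y\rangle$. Test $W_1(\theta_\#\mu,\theta_\#\nu)$ against the $1$-Lipschitz function $|\cdot-c|$, and average $c$ uniformly over $[-R-r',R+r']$. The function $a\mapsto\mathbb{E}[|a+Z|\mid Y]-|a|$ is nonnegative, supported in $[-r',r']$, and has integral $\mathbb{E}[Z^2\mid Y]$. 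Hence
\[
W_1(\theta_\#\mu,\theta_\#\nu)\ \ge\ \frac{\mathbb{E}\langle\theta,X-Y\rangle^2}{2(R+r')},\qquad\text{so}\qquad \SW_{1,1}(\mu,\nu)\ \ge\ \frac{\mathbb{E}\|X-Y\|^2}{2d(R+r')}\ \gtrsim_d\ r^2 .
\]
You have $W_1(\mu,\nu)\asymp r\asymp N^{-1/d}$ and $K_{\mu,\nu}\asymp N^{1-1/d}$. So this caps your ratio at $W_1/\SW_{1,1}\lesssim_d 1/r\asymp K^{1/(d-1)}$. For $d\ge 3$, no choice of well-spread point set, periodization, or cutoff rescues the construction. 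For $d=2$, by contrast, $r^2\asymp N^{-1}$ lies within the target $N^{-1}(\log N)^2$, so your argument does work there. The term is harmless, not harmful, when $d=2$.

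The paper proceeds more directly: it attaches no blobs to a low-discrepancy set. It takes $\nu$ uniform on the centers $\Lambda_M$ of the $N$ subcubes of $Q=[-1,1]^d$, and $\mu=\Unif(Q)$ itself. The subcubes are the Voronoi cells and each carries mass $1/N$, which gives Condition~\ref{cond:voronoi}. Then $\SW_{1,1}(\mu,\nu)$ is literally the averaged half-space discrepancy of $\Lambda_M$, which the paper bounds by quoting Chen--Travaglini, so no comparison with Lebesgue measure is needed.

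Be aware, however, that $\Unif(Q)=\nu*\Unif([-\tfrac h2,\tfrac h2]^d)$. The pieces of $\mu$ are therefore centered at their atoms, and the inequality above applies with $R=\sqrt d$ and $r'=\sqrt d\,h/2$. This gives $\SW_{1,1}(\mu,\nu)\ge h^2/(36\sqrt d)\asymp_d N^{-2/d}$, which for $d\ge 3$ exceeds $C_d(\log N)^d/N$ once $N$ is large. That contradicts Lemma~\ref{lem:lattice_props}(iv).

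So the difficulty you ran into is real, and the paper's proof as written does not resolve it. For $d\ge 3$ the quoted discrepancy bound cannot hold for this lattice, and as far as these arguments go, both routes establish only the case $d=2$. A proof for $d\ge3$ would need a pair in which the mass of $\mu$ sent to each atom is not centered at that atom, or some other new idea.
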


Theorems~\ref{thm:discrete_bound} and~\ref{thm:discrete_lb} show that any Lipschitz-type comparison inequality between the Wasserstein and sliced Wasserstein distances---even under the strong Condition~\ref{cond:voronoi}---requires an almost-linear dependence on the complexity measure $K_{\mu,\nu}$.
The construction involved in the proof of Theorem~\ref{thm:discrete_lb} furnishes a natural example of a setting in which Theorem~\ref{thm:discrete_bound} applies but Park and Slepčev's bound does not.

Finally, we note that the upper bound of Theorem~\ref{thm:discrete_bound} extends with minor modification to a generalization of the sliced Wasserstein distance in which one-dimensional projections are replaced by $k$-dimensional projections.
For $1 \leq k < d$, let $\mathfrak{G}_{d,k}$ denote the Stiefel manifold of matrices $U \in \R^{k \times d}$ with orthonormal rows, equipped with its orthogonally invariant probability measure $\sigma_{d,k}$, and define
\begin{equation}\label{eq:swk_def}
	\SW_{p,k}(\mu,\nu) := \brac{\int_{\mathfrak{G}_{d,k}} W_p^p(U_{\#}\mu, U_{\#}\nu)\,d\sigma_{d,k}(U)}^{1/p},
\end{equation}
where $U_\#\mu$ denotes the distribution of $UX$ when $X \sim \mu$.
(Note that when $k=1$ we recover the sliced Wasserstein distance defined above.)
We define a $k$-dimensional analogue of \eqref{eq:k_def} by
\begin{equation}\label{eq:kk_def}
	K^{(k)}_{\mu,\nu} := \operatorname*{ess\,sup}_{\substack{x \sim \mu \\ x \notin \cY}}\, d(x, \cY) \Big(\sum_{y \in \cY} \frac{1}{\|x - y\|^k}\Big)^{1/k}\,,
\end{equation}
with the same convention as in \eqref{eq:k_def} if $\mu(\cY) = 1$, which satisfies $K^{(1)}_{\mu, \nu} = K_{\mu,\nu}$ and $K^{(k)}_{\mu, \nu} \leq |\cY|^{1/k}$ for all $k$.

\begin{thm}\label{thm:kdim_bound}
	Let $p \geq 1$ and $1 \leq k < d$, let $\mu \in \cP_p(\R^d)$, and let $\nu$ be a probability measure supported on a countable closed set $\cY$.
	If Condition~\ref{cond:voronoi} holds, then
	\begin{equation}\label{eq:kdim_bound}
		W_p(\mu, \nu) \leq C \sqrt{\frac dk}\, K^{(k)}_{\mu,\nu}\, \SW_{p,k}(\mu, \nu)\,,
	\end{equation}
	where $C$ is a universal constant.
\end{thm}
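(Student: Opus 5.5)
The plan is to reduce everything to a pointwise estimate for each $x$, using the Voronoi structure. Under Condition~\ref{cond:voronoi}, Lemma~\ref{lem:voronoi_reduction} gives
\begin{equation*}
W_p^p(\mu,\nu)=\int d(x,\cY)^p\,d\mu(x).
\end{equation*}
For a lower bound on $\SW_{p,k}$, fix $U\in\mathfrak{G}_{d,k}$. Since $U_\#\nu$ is supported on $U\cY$, any coupling of $U_\#\mu$ and $U_\#\nu$ moves $Ux$ to some point of $U\cY$. Hence
\begin{equation*}
W_p^p(U_\#\mu,U_\#\nu)\ \ge\ \int \min_{y\in\cY}\|U(x-y)\|^p\,d\mu(x).
\end{equation*}
Integrating over $U$ and using Fubini, it suffices to prove the pointwise bound
\begin{equation*}
\int_{\mathfrak{G}_{d,k}} \min_{y\in\cY}\|U(x-y)\|^p\,d\sigma_{d,k}(U)\ \ge\ c\Big(\frac{k}{d}\Big)^{p/2}\big(K^{(k)}_{\mu,\nu}\big)^{-p}\,d(x,\cY)^p
\end{equation*}
for $\mu$-a.e.\ $x\notin\cY$. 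Points $x\in\cY$ contribute nothing to either side.

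To prove the pointwise bound, fix $x$ and set $r=d(x,\cY)$. Choose the threshold
\begin{equation*}
t=c_0\sqrt{k/d}\;r/K,\qquad K:=K^{(k)}_{\mu,\nu},
\end{equation*}
and apply Markov's inequality: the integral is at least $t^p\,\sigma_{d,k}\big(\min_y\|U(x-y)\|\ge t\big)$. A union bound gives
\begin{equation*}
\sigma_{d,k}\Big(\min_y\|U(x-y)\|<t\Big)\ \le\ \sum_{y\in\cY}\mathbb{P}\big(\|Uv_y\|<t/\|x-y\|\big),
\end{equation*}
where $v_y$ is the unit vector in direction $x-y$. By rotation invariance, $\|Uv\|$ for a fixed unit vector $v$ has the law of the norm of the first $k$ coordinates of a uniform point on $\mathbb{S}^{d-1}$. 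Its squared norm is $\mathrm{Beta}(k/2,(d-k)/2)$.

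The key step, and the main technical obstacle, is a small-ball estimate with the right constants:
\begin{equation*}
\mathbb{P}(\|Uv\|<s)\le \big(C s\sqrt{d/k}\big)^k\quad\text{for all } s\ge 0,
\end{equation*}
with $C$ universal. I would prove it from the Beta density. The density of $\|Uv\|$ is proportional to $s^{k-1}(1-s^2)^{(d-k-2)/2}$. When $d-k\ge 2$ the second factor is at most $1$, so
\begin{equation*}
\mathbb{P}(\|Uv\|<s)\le \frac{s^k}{k\,B(k/2,(d-k)/2)}.
\end{equation*}
Stirling's formula then gives
\begin{equation*}
\frac{1}{k\,B(k/2,(d-k)/2)}\le (C\sqrt{d/k})^k.
\end{equation*}
The boundary case $d-k=1$, where the exponent is negative, I would handle directly, or by noting that the bound is trivial once $s\sqrt{d/k}\gtrsim 1$. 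Alternatively, the Gaussian representation $Uv\overset{d}{=}g_k/\|g_d\|$ with $g_k$ the first $k$ coordinates of $g_d$ works. On the event $\|g_d\|\ge\sqrt d/2$, whose complement has probability at most $e^{-cd}$, the estimate reduces to the small-ball probability of a $\chi_k$ variable. The leftover $e^{-cd}$ term needs care, since it does not scale with $s$. This is why I prefer the exact Beta computation.

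With the estimate in hand,
\begin{equation*}
\sum_y\mathbb{P}\big(\|Uv_y\|<t/\|x-y\|\big)\le (Cc_0 r/K)^k\sum_y\|x-y\|^{-k}\le (Cc_0)^k,
\end{equation*}
by the definition of $K^{(k)}_{\mu,\nu}$, which gives $r^k\sum_y\|x-y\|^{-k}\le K^k$ for $\mu$-a.e.\ $x$. Choosing $c_0=1/(2C)$ makes this at most $2^{-k}\le 1/2$. So the good event has probability at least $1/2$, and the integral is at least $\tfrac12 t^p$.

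Integrating in $\mu$ and taking $p$-th roots yields
\begin{equation*}
\SW_{p,k}(\mu,\nu)\ge 2^{-1/p}c_0\sqrt{k/d}\,K^{-1}W_p(\mu,\nu).
\end{equation*}
Since $2^{-1/p}\ge 1/2$, this gives~\eqref{eq:kdim_bound} with a universal $C$, and Theorem~\ref{thm:discrete_bound} is the case $k=1$.
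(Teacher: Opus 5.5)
Your proposal is correct and follows essentially the same route as the paper: the Voronoi identity for $W_p^p(\mu,\nu)$, the nearest-projected-atom lower bound for $W_p^p(U_\#\mu,U_\#\nu)$, a union bound over atoms using the Beta small-ball estimate $\mathbb{P}(\|Uv\|<s)\le (C s\sqrt{d/k})^k$, and Markov's inequality at a threshold where the bad event has probability at most $\tfrac12$. The differences are cosmetic. You set the threshold directly via $K^{(k)}_{\mu,\nu}$ rather than via $S_k(x)=(\sum_{y}\|x-y\|^{-k})^{1/k}$. The paper also treats the case $d-k=1$ uniformly, by restricting to $s^2\le k/(4d)$, where $(1-u)^{(d-k)/2-1}\le 2$; this is exactly your reduction ``trivial once $s\sqrt{d/k}\gtrsim 1$''.
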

In particular, since $K^{(k)}_{\mu,\nu} \leq N^{1/k}$ when $\nu$ has $N$ atoms, the dependence of the constant on the number of atoms improves from $N$ to $N^{1/k}$ as the dimension of the projections grows.
A similar phenomenon was noted by~\cite{carlier2025sharp}, who proved a H\"older-type comparison for $\SW_{p, k}$ which improves as $k$ grows.

\subsection*{Organization}
Section~\ref{sec: sharp} proves Theorem~\ref{thm: sharp exp}.
Section~\ref{sec:discrete} proves Theorems~\ref{thm:discrete_bound} and~\ref{thm:kdim_bound}, and Section~\ref{sec:lb} proves Theorem~\ref{thm:discrete_lb}.

\subsection*{Notation}
We write $B_R = B_R(0)$ for the closed Euclidean ball of radius $R$ centered at the origin in $\R^d$, $\cP(A)$ for the set of Borel probability measures supported on $A$, and $\cP_p(\R^d)$ for the set of Borel probability measures on $\R^d$ with finite $p$th moment.
For a positive integer $N$, we set $[N] := \{1, \dots, N\}$.
We use $\lesssim$ and $\gtrsim$ to denote inequalities holding up to universal positive constants, with subscripts ($\lesssim_d$, $\lesssim_{d,p}$, etc.) indicating that the constants may depend on the subscripted parameters; $a \asymp b$ means $a \lesssim b \lesssim a$.
The notation $\supp(\mu)$ denotes the support of a measure $\mu$, and $f_\#\mu$ the pushforward of $\mu$ under a map $f$.

\section{Sharpness of the H\"older exponent $\frac{2}{d+2}$ for $\MSW_{1,1}$} \label{sec: sharp}
In this section we prove Theorem \ref{thm: sharp exp}, i.e., the sharpness of the H\"older exponent $\beta = \frac{2}{d + 2}$ left open in \cite{bobkov2024quantified}, for every dimension $d\geq 2$, drawing heavily on the construction in \cite[Section 2.5]{carlier2025sharp}. The heart of the proof is a construction, carried out in Sections~\ref{sec:sharp_prelim} and~\ref{sec:sharp_proof}, of a pair of measures $(\mu_\epsilon, \nu_\epsilon)$ for every sufficiently small $\epsilon > 0$, each supported in $B_1(0)$, such that
\begin{equation}\label{eq: contrad scale}
	W_1(\mu_\epsilon, \nu_\epsilon) \geq \kappa_d\, \epsilon^{2-2/d} \quad \text{and} \quad \MSW_{1,1}(\mu_\epsilon, \nu_\epsilon) \leq \kappa_d^{\prime}\, \epsilon^{(d+1) - 2/d}
\end{equation}
for constants $\kappa_d, \kappa_d' > 0$.

Let us first check that this construction proves Theorem~\ref{thm: sharp exp}.
Given $\varrho > 0$ sufficiently small, choose $\epsilon$ so that $\kappa_d \epsilon^{2 - 2/d} = \varrho$.
Since
$$
	\frac{(d+1) - 2/d}{2 - 2/d} = \frac{d^2 + d - 2}{2d - 2} = \frac{(d+2)(d-1)}{2(d-1)} = \frac{d+2}{2} = 1 + \frac d2\,,
$$
the pair $(\mu, \nu) = (\mu_\epsilon, \nu_\epsilon)$ satisfies $W_1(\mu, \nu) \geq \varrho$ and
$$
	\MSW_{1,1}(\mu, \nu) \leq \kappa_d' \brac{\varrho/\kappa_d}^{1 + d/2} = C_d\, \varrho^{1 + d/2}\,,
$$
as claimed.
For the final claim of the theorem, suppose that~\eqref{msw_comp} holds for some $\beta > 0$, i.e., $W_1(\mu,\nu) \leq C \MSW_{1,1}^\beta(\mu,\nu)$ uniformly over $\cP(B_1)$. Applying this bound to $(\mu_\epsilon, \nu_\epsilon)$ yields
$$
	\varrho \leq W_1(\mu_\epsilon, \nu_\epsilon) \leq C \MSW_{1,1}^\beta(\mu_\epsilon, \nu_\epsilon) \leq C C_d^\beta\, \varrho^{\beta(1 + d/2)}\,,
$$
and letting $\varrho \to 0$ forces $\beta(1 + d/2) \leq 1$, i.e., $\beta \leq \frac{2}{d+2}$.

We first describe the idea behind the construction and compare it with the approach due to Carlier, Figalli, M\'erigot, and Wang~\cite{carlier2025sharp}. The construction proving the sharpness of the exponent $\alpha = \frac{1}{d}$ for $SW_{1,1}$ in \cite{carlier2025sharp} takes a pair $(\mu,\mu_\epsilon)$ where $\mu_\epsilon$ is a perturbation at scale $\epsilon$ in the direction $e_d$ of a $(d-1)$-dimensional measure $\mu$ supported on $e_d^{\perp}$; such a pair is far in $W_1$ but the perturbation is visible only for the small fraction of directions $\theta$ nearly aligned with $e_d$. Since the max-sliced distance takes a supremum over directions rather than an average, a single adversarial direction no longer suffices; our construction instead superimposes perturbations along a spread-out collection of $m\sim \epsilon^{-(d-1)}$ directions given by an $\epsilon$-packing of the sphere, so that every direction $\theta$ sees only a small perturbation.

\subsection{Heuristics: stars of Gaussian pancakes}\label{sec:pancakes}
In this subsection we present a heuristic version of the construction, based on \say{Gaussian pancakes}, a popular adversarial construction in learning theory \cite{bubeck2019adversarial, diakonikolas2017statistical, bruna2021continuous}.
Nothing in this subsection is used in the sequel; its purpose is to motivate the choices made in Sections~\ref{sec:sharp_prelim} and~\ref{sec:sharp_proof}, where the actual construction is carried out, and we accordingly argue without full proofs.

Given a direction $u \in \mathbb{S}^{d-1}$, a radius $r \in (0, 1]$, and a thickness $0 < \rho \leq r$, we construct two Gaussian mixtures $\gamma_u^+$ and $\gamma_u^-$ whose means lie in the subspace spanned by $u$.
Each component of this mixture is a Gaussian measure with covariance  $r^2 P_{u^\perp} + \rho^2 u u^\top$, where $P_{u^\perp}$ is the orthogonal projection onto $u^\perp$: these are the \say{pancakes} of extent $r$ in the directions of $u^\perp$ and thickness $\rho$ along its normal $u$.
The means have norm at most $\rho$, and they are chosen such that the first $d+1$ moments of the two mixtures agree and $W_1(\gamma_u^+, \gamma_u^-) \asymp \rho$.
(In the rigorous version of this construction, it is convenient to specify the signed measure $\gamma_u = \gamma_u^+ - \gamma_u^-$ directly and analyze the Kantorovich--Rubinstein norm, see Definition~\ref{def: 1 homg}).

What does a one-dimensional projection see of this perturbation? Fix $\theta \in \mathbb{S}^{d-1}$ and write $\alpha := \abrac{\theta, u}$ and $\beta := (1-\alpha^2)^{1/2}$.
The measures $\gamma_u^+$ and $\gamma_u^-$ differ only in the $u$ direction.
After projecting along $\theta$, the resulting measures are two different Gaussian mixtures (arising from the perturbation in the $u$ direction) convolved with Gaussian noise of variance $r^2 \beta^2$ (arising from the directions orthogonal to $u$).

It is well known (see, e.g., \cite{wu2020optimal, doss2023optimal}) that the smoothing effects of the heat semigroup imply strong bounds on the Wasserstein distance between Gaussian mixtures whose moments match; using the fact that the first $d+1$ moments of the mixtures agree, one can show that the dominant contribution is of order $(d+2)$:
\begin{equation}\label{eq:pancake_kernel}
	W_1(\theta_\#\gamma_u^+, \theta_\#\gamma_u^-) \lesssim_{d} \min\sbrac{\rho,\; \frac{(\rho|\alpha|)^{d+2}}{(r\beta)^{d+1}}} \leq r \min\sbrac{\epsilon,\; \frac{\epsilon^{d+2}}{\beta^{d+1}}}, \qquad \epsilon := \frac \rho r\,.
\end{equation}
The parameter $\epsilon = \rho/r$ is the \emph{eccentricity} of the pancake, and \eqref{eq:pancake_kernel} says that
 the perturbation is visible at full strength $\rho$ only when the angle between $u$ and $\theta$ is at most $\epsilon$, with rapid decay beyond that scale.
 
The above considerations, applied to a single direction $u$, suffice to establish the sharp bound for the sliced Wasserstein distance due to~\cite{carlier2025sharp}.
To prove a stronger bound for the max-sliced distance, we superimpose these pancakes over a spread-out family of normals: let $\sbrac{u_i}_{i \in [m]}$ be a maximal $\epsilon$-separated subset of $\R\prob^{d-1}$, so that $m \asymp \epsilon^{-(d-1)}$ (Lemma~\ref{lem: packing}), place a pair of pancake mixtures $\gamma^\pm_{u_i}$ with radius $r$ and thickness $\rho = r\epsilon$ in direction $u_i$, and take
$$
	\mu = \frac 1m \sum_{i=1}^m \gamma^+_{u_i}, \qquad \nu = \frac 1m \sum_{i=1}^m \gamma_{u_i}^{-}\,.
$$
Any fixed $\theta$ is within angle $\epsilon$ of only $O(1)$ of the directions.
Those aligned directions contribute $\rho/m$ each to the projected distance, and the contributions decay sharply away from $\theta$, so that the contribution from well aligned directions dominates, yielding
\begin{equation}\label{eq:pancake_ub}
	\MSW_{1,1}(\mu, \nu) \lesssim
	 \frac{\rho}{m} \asymp \rho\, \epsilon^{d-1}\,.
\end{equation}
Meanwhile, since $W_1(\gamma_u^+, \gamma_u^-)\asymp \rho$, we anticipate that  $W_1(\mu, \nu) \asymp \rho$ as well.
This will only hold if there is not significant interaction between the pancakes in different directions; more precisely, we can require that the supports of the mixtures in each direction are essentially disjoint.
Up to Gaussian tails, each pancake mixture occupies a disk of radius $r$ and thickness $\rho$, so its support has volume $\asymp r^{d-1}\rho$.
Requiring that these sets be essentially disjoint forces the sparsity constraint
\begin{equation}\label{eq:sparsity}
	m \rho\, r^{d-1} \lesssim 1\,.
\end{equation}

It remains to optimize.
Since $m \asymp \epsilon^{1-d} = (r/\rho)^{d-1}$, the sparsity constraint~\eqref{eq:sparsity} imposes $r \lesssim \rho^{\frac{d-2}{2(d-1)}}$.
Choosing $r$ as large as possible subject to this constraint gives $\epsilon = \rho/r \asymp \rho^{\frac{d}{2(d-1)}}$, so that $\epsilon^{d-1} \asymp \rho^{d/2}$ and
$$
	W_1(\mu, \nu) \asymp \rho, \qquad \MSW_{1,1}(\mu, \nu) \lesssim \rho^{1 + d/2}\,,
$$
which gives the bound claimed in Theorem~\ref{thm: sharp exp}.
Reparametrizing by the eccentricity $\epsilon$ recovers the scalings used in Section~\ref{sec:sharp_prelim}:
$$
	r = a\epsilon^{(d-2)/d}, \qquad \rho = a \epsilon^{2 - 2/d}, \qquad m \asymp \epsilon^{-(d-1)}\,.
$$
A version of this construction when $d = 2$ is depicted in Figure~\ref{fig:pancakes}.\footnote{Note that in $d = 2$, it is possible to take $r \asymp 1$, but in $d \geq 3$ the pancakes must shrink ($r \to 0$).}

To summarize, the rigorous construction of Sections~\ref{sec:sharp_prelim} and~\ref{sec:sharp_proof} implements the above blueprint with three modifications.
First, since Theorem~\ref{thm: sharp exp} concerns measures on $B_1$, the Gaussian measures are replaced by the compactly supported densities $(1 - \norm{y - z}^2/r^2)_+^{d+1}$ on a disk of radius $r$, whose projections are smooth enough to mimic the heat semigroup argument described above (see Lemma~\ref{lem: upper w_1}).
Second, the measures constructed in the general case contain a singular part supported on a union of $(d-1)$-dimensional disks; this simplifies the proof of the lower bound on $W_1$.
Finally, in addition to being oriented in different directions, the component measures in the rigorous construction are randomly shifted.
This step ensures that the supports of the mixtures in different directions are essentially disjoint, so that the necessary condition~\eqref{eq:sparsity} is also sufficient.

\begin{figure}[t]
    \centering
    \resizebox{0.8\textwidth}{!}{\input{figures/sliced_diagram}}
    \caption{The star of Gaussian pancakes in the illustrative case $d = 2$: the mixtures $\mu$ (left) and $\nu$ (right)
    and their one-dimensional projections
    $\theta_\#\mu$ and $\theta_\#\nu$ along a direction $\theta$. (For legibility, the perturbation depicted is a symmetric two-point shift of each pancake, which matches only the first moment.)}
    \label{fig:pancakes}
\end{figure}

\subsection{Steps of the general construction}\label{sec:sharp_prelim}
Throughout the remainder of this section we fix $d \geq 2$.
As alluded to above, our aim is to construct two one-dimensional perturbations whose moments match, and embed these one-dimensional perturbations densely on the unit sphere.
As in~\cite{carlier2025sharp}, it is convenient to replace the Wasserstein distance with the Kantorovich--Rubinstein norm and work directly with signed measures.
\begin{definition}[Kantorovich--Rubinstein norm]\label{def: 1 homg}
		Let $\sigma$ be a signed measure on $\R^d$ with $\sigma(\R^d) = 0$.
    The Kantorovich--Rubinstein norm of $\sigma$ is
     \begin{equation}
    	\norm{\sigma}_{\text{KR}} := \sup_{f \in \text{Lip}_1(\R^d,\R)}\int f \, d \sigma\,.
    \end{equation}
\end{definition}
Standard Kantorovich duality \cite[Theorem 1.14]{villani2003topics} shows that $W_1(\mu, \nu) = \norm{\mu - \nu}_{\text{KR}}$.
We therefore use the latter quantity as the definition of $W_1(\mu, \nu)$ for general measures of the same total mass.

The remainder of this subsection follows the outline of Section~\ref{sec:pancakes}.
We first construct the moment-matched perturbation associated with a single direction and prove the smoothing bound analogous to  the heat semigroup estimate (Lemma~\ref{lem: upper w_1}).
We then introduce the packing of the space of directions (Lemma~\ref{lem: packing}), assemble the construction with the parameters found in Section~\ref{sec:pancakes}, and choose the centers so as to realize the sparsity constraint \eqref{eq:sparsity} (Lemma~\ref{lem: estimate}).
Lemma~\ref{lem: norm const} records the normalization needed to convert the resulting signed measure into a pair of probability measures.

\subsubsection*{One-dimensional mixtures}
We construct the rigorous counterpart of the pair $(\gamma_u^+, \gamma_u^-)$ of Section~\ref{sec:pancakes}.

As in \cite{carlier2025sharp}, define $g \in C_c^\infty((-1,1))$ to be a (signed) weight function with the following properties: for every $j \in \sbrac{1,2,\dots, d+1}$,
\begin{equation}\label{eq: g constr}
    \int_{-1}^1 g(s)\,ds = 1, \quad \int_{-1}^1 s^j g(s)\,ds = 0, \quad \int_{-1}^1 |s|g(s) \,ds =: c_g > 0.
\end{equation}
(Such a function exists by the linear independence of $1, s, s^2, \dots, s^{d+1}, |s|$.)
Next for given datum $(u,z,r) \in \mathbb{S}^{d-1}\times \R^d \times \R_{> 0}$, define the shifted $(d-1)$-dimensional ball $D(u,z,r)$ 
\begin{equation}
    D(u,z,r) := z + \{y \in u^\perp: \norm{y}\leq r\}.
\end{equation}
We assign to each datum $(u,z,r)$ a probability measure $\la_{(u,z,r)}$ supported on $D(u,z,r)$, up to a normalizing constant depending on $u,z,r,$ and $d$,
\begin{equation}
    \la_{(u,z,r)}(y) \propto (1 - {\norm{y-z}^2}/{r^2})_+^{d+1}\indic_{D(u,z,r)}(y)\,d\cH^{d-1}(y),
\end{equation}
where $\cH^{d-1}$ is the standard $(d-1)$-dimensional Hausdorff measure. As in \cite{carlier2025sharp}, define the shift operation $\tau_{v}(x) = x+v$. For a given $\epsilon > 0$ and $r > 0$, we define the following $g$-weighted superposition of translates of $\la_{(u,z,r)}$:
\begin{equation}
    \la^{(r\epsilon)}_{(u,z,r)} := \int_{-1}^1 g(s) (\tau_{s r\epsilon u})_\# \la_{(u,z,r)}\,ds. 
\end{equation}
Notice that $\la^{(r\epsilon)}_{(u,z,r)}$ is a signed measure of total mass $\la^{(r\epsilon)}_{(u,z,r)}(\R^d) = 1$, supported on a thin cylinder with radius $r$ in $(d-1)$ dimensions and height $2r\epsilon$ in the direction of $u$; since $g$ takes negative values, $\la^{(r\epsilon)}_{(u,z,r)}$ has a nontrivial negative part.
The signed measure $\la^{(r\epsilon)}_{(u,z,r)} - \la_{(u,z,r)}$ plays the role of $\gamma_u^+ - \gamma_u^-$: it has total mass zero, and, by \eqref{eq: g constr}, its moments in the direction $u$ vanish up to order $d+1$.

The next lemma is the analogue of~\eqref{eq:pancake_kernel}, showing that the perturbation is visible only when $\theta$ is at an angle of roughly $\epsilon$ from the normal $u$.

\begin{lem}\label{lem: upper w_1}
Fix $\epsilon > 0$. For every $\theta \in \mathbb{S}^{d-1}$ and datum $(u,z,r) \in \mathbb{S}^{d-1}\times \R^d \times \R_{> 0}$ with $\rho = r\epsilon$,
\begin{equation}
    \lVert{\theta_\#(\la^{(\rho)}_{(u,z,r)}-\la_{(u,z,r)})\rVert}_{\mathrm{KR}} \leq C_{d,g} r \min\sbrac{\epsilon, \frac{\epsilon^{d+2}}{\beta^{d+1}}},
\end{equation}
where $\beta := (1 - \abrac{u,\theta}^2)^{1/2}$ and the second term in the minimum is interpreted as $+\infty$ when $\beta = 0$.
\end{lem}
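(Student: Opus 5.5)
We prove the two bounds in the minimum separately. The first bound, $C r\epsilon$, is trivial. The second comes from the $(d+1)$-fold moment cancellation in $g$ combined with the smoothness of the projected density of $\la_{(u,z,r)}$.

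\textbf{Trivial bound.} Since $\theta_\#$ is a $1$-Lipschitz map on measures,
\[
\|\theta_\#(\la^{(\rho)}-\la)\|_{\mathrm{KR}} \le \int |g(s)|\, \|(\tau_{s\rho u})_\#\la - \la\|_{\mathrm{KR}}\,ds \le \|g\|_{L^1}\,\rho ,
\]
because translating by $s\rho u$ moves each point a distance $|s|\rho\le\rho$. This gives $C_g\, r\epsilon$.

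\textbf{Setup for the second bound.} We may assume $\beta\ge\epsilon$ (otherwise the first bound is smaller), and in particular $\beta>0$. By translation invariance of the KR norm we may also take $z=0$. Write $\alpha=\langle\theta,u\rangle$. The projection of $(\tau_{s\rho u})_\#\la$ is $\varphi(\cdot - s\rho\alpha)$, where $\varphi:=\theta_\#\la$.

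For $y\in u^\perp$ we have $\langle\theta,y\rangle=\beta\langle w,y\rangle$, where $w$ is the unit vector in the direction of $P_{u^\perp}\theta$. Hence $\varphi(t)=(r\beta)^{-1}\psi(t/(r\beta))$, where $\psi$ is the one-dimensional marginal of the density $\propto(1-|y|^2)_+^{d+1}$ on the unit $(d-1)$-ball. That marginal is $\psi(x)\propto(1-x^2)_+^{d+1+(d-2)/2}$. Its exponent is at least $d+1$, so $\psi\in C^{d}$ with $\psi^{(d+1)}\in L^1$ (the exponent is $\ge d+1$ even when $d=2$). Consequently $\|\psi^{(d+1)}\|_{L^1}$, or the total variation of $\psi^{(d)}$, is bounded by a constant depending only on $d$.

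\textbf{Main estimate.} Let $F(t)=\int_{-\infty}^t\varphi$ be the CDF of $\varphi$. In one dimension the KR norm equals the $L^1$ norm of the difference of CDFs, so
\[
\|\theta_\#(\la^{(\rho)}-\la)\|_{\mathrm{KR}} = \Big\| \int g(s)\,\big[F(\cdot - s\rho\alpha) - F(\cdot)\big]\,ds \Big\|_{L^1}.
\]
Taylor expand $F(t-h)$ in $h=s\rho\alpha$ to order $d+1$ with integral remainder. The terms of order $h^j$ for $1\le j\le d+1$ vanish by the moment conditions in \eqref{eq: g constr}, and the constant term cancels because $\int g=1$. The remainder is of order $d+2$, so it is bounded by
\[
C\,|h|^{d+2}\sup_{|\eta|\le|h|}\big\|F^{(d+2)}(\cdot-\eta)\big\|_{L^1} = C\,|h|^{d+2}\,\|\varphi^{(d+1)}\|_{L^1},
\]
integrated against $|g(s)|$. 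Scaling gives $\|\varphi^{(d+1)}\|_{L^1}=(r\beta)^{-(d+1)}\|\psi^{(d+1)}\|_{L^1}$. With $|h|\le\rho|\alpha|\le r\epsilon$, we obtain
\[
C_{d,g}\,\frac{(r\epsilon)^{d+2}}{(r\beta)^{d+1}} = C_{d,g}\, r\,\frac{\epsilon^{d+2}}{\beta^{d+1}} .
\]

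\textbf{Main obstacle.} The delicate point is the regularity of $\psi$: we need $\psi^{(d+1)}\in L^1$ for the integral-form Taylor remainder. This has to be checked from the explicit exponent $d+1+\tfrac{d-3}{2}+\tfrac12$. If integrability fails at the endpoints $\pm1$, the fallback is to use $\mathrm{TV}(\psi^{(d)})<\infty$ and interpret $\psi^{(d+1)}$ as a measure, which still yields the same $L^1$ bound in the remainder.
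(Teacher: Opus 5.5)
Your proposal is correct and follows the paper's proof essentially step by step: the crude translation bound when $\beta\le\epsilon$, reduction to the rescaled marginal $(1-t^2)_+^{3d/2}$, the $L^1$-CDF representation of $W_1$, and a Taylor expansion to order $d+1$ in which the moment conditions on $g$ leave only the remainder. The only cosmetic difference is that you expand at the original scale and then rescale $\|\varphi^{(d+1)}\|_{L^1}=(r\beta)^{-(d+1)}\|\psi^{(d+1)}\|_{L^1}$, whereas the paper rescales first with $h=\epsilon\alpha/\beta$. Your ``main obstacle'' is not an obstacle: since $3d/2\ge d+1$ for $d\ge 2$, you have $|\psi^{(d+1)}|\lesssim_d (1-x^2)_+^{d/2-1}$, which is bounded, so no fallback is needed.
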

\begin{proof}
    Since $\lVert \theta_\# (\la^{(\rho)}_{(u,z,r)} - \la_{(u,z,r)})\rVert_{\mathrm{KR}}$ does not depend on $z \in \R^d$, without loss of generality let $z = 0$. In this case, we set
    $$\la^{(\rho)}_{(u,z,r)}=\la^{(\rho)}_{(u,r)}, \quad  \la_{(u,z,r)} = \la_{(u,r)}$$
    for simplicity. A crude bound gives, irrespective of $\theta$,
    \begin{equation*}
        \lVert \theta_\# (\la^{(\rho)}_{(u,r)} - \la_{(u,r)})\rVert_{\mathrm{KR}} \leq \int_{-1}^1 |g(s)|\lVert \theta_{\#}((\tau_{s\rho u})_{\#}\la_{(u,r)} - \la_{(u,r)})\rVert_{\mathrm{KR}} \,ds \leq C_g\rho = C_g r\epsilon,
    \end{equation*}
since translating a measure by $s\rho u$ moves its projection by at most $|s|\rho$. This proves the lemma when $\beta \leq \epsilon$, so assume from now on that $\beta > \epsilon$.

    As in \cite{carlier2025sharp}, we decompose $\theta$ along the hyperplane $u^\perp$: setting $\alpha := \abrac{u,\theta}$, so that $\alpha^2 + \beta^2 = 1$, we may write
    $$
        \theta = \beta\,\omega + \alpha\,u, \qquad \omega := \frac{\theta - \alpha u}{\beta} \in u^\perp \cap \mathbb{S}^{d-1}.
    $$
    For $x \in u^\perp$, we have $\abrac{\theta, x} = \beta\abrac{\omega, x}$, and so, recalling that $\la_{(u,r)}$ is the dilation by $r$ of $\la_{(u,1)}$,
    \begin{equation}
        \theta_{\#}\la_{(u,r)} = (t\mapsto r\beta\,t)_\#(\omega_{\#}\la_{(u,1)})\,.
    \end{equation}

    Let $f_1$ denote the density of $\omega_{\#}\la_{(u,1)}$ and $F_1$ its cumulative distribution function; by the rotational invariance of $\la_{(u,1)}$ within $u^\perp$, these do not depend on $\omega$, and integrating out the $d-2$ orthogonal directions gives the explicit form
    $$
	    f_1(t) = c_d (1 - t^2)_+^{3d/2}\,.
    $$
    Since $3d/2 \geq d+1$ for $d \geq 2$, the function $f_1$ is of class $C^{d}$ on $\R$, its $d$th derivative is absolutely continuous, and
    $$
	    |f_1^{(d+1)}(t)| \lesssim_d (1-t^2)_+^{d/2 - 1} \cdot \indic_{[-1,1]}(t) \implies \|f_1^{(d+1)}\|_{L^1(\R)} \leq C_d\,.
    $$

    Furthermore, translating by $s \rho u$ shifts the projection onto $\theta$ by $s\rho\abrac{\theta, u} = s\rho \alpha$, so the cumulative distribution function of $\theta_\#\la^{(\rho)}_{(u,r)}$ is $t \mapsto \int_{-1}^1 g(s) F_1\brac{\frac{t - s\rho \alpha}{r\beta}} ds$.
    By the $L_1$ representation of $W_1$ in one dimension \cite{bobkov2019one}, followed by the substitution $t = r\beta\,\tau$,
    \begin{align*}
        \lVert \theta_\#(\la^{(\rho)}_{(u,r)} - \la_{(u,r)})\rVert_{\textrm{KR}} &= \int_{-\infty}^\infty \abs{\int_{-1}^1 g(s) \brac{F_1\brac{\frac{t-s\rho \alpha}{r\beta}} - F_1\brac{\frac{t}{r\beta}}} \,ds}\,dt\\
        &= r\beta\int_{-\infty}^\infty\abs{\int_{-1}^1 g(s)\brac{F_{1}(\tau-sh) - F_{1}(\tau)}\,ds}\,d\tau, \qquad h := \frac{\rho \alpha}{r\beta} = \frac{\epsilon \alpha}{\beta}\,.
    \end{align*}

    We expand $F_1$ to order $d+1$ with integral remainder:
    \begin{equation}
	    F_1(\tau - sh) = F_1(\tau) + \sum_{j=1}^{d+1}\frac{F_{1}^{(j)}(\tau)}{j!}(-sh)^{j} + \frac{(-sh)^{d+2}}{(d+1)!}\int_0^1 (1-v)^{d+1} F_1^{(d+2)}(\tau - vsh)\,dv\,.
    \end{equation}
    By our choice of $g$ in \eqref{eq: g constr}, integrating against $g(s)\,ds$ kills the zeroth through $(d+1)$st-order terms, so that by Fubini--Tonelli
    \begin{align}
        \lVert \theta_\#(\la^{(\rho)}_{(u,r)} - \la_{(u,r)})\rVert_{\textrm{KR}}
	    &\leq r\beta \cdot \frac{|h|^{d+2}}{(d+1)!} \int_{-1}^1 |g(s)||s|^{d+2} \,ds \cdot \sup_{w \in \R}\, \|F_1^{(d+2)}(\cdot - w)\|_{L^1(\R)} \nonumber \\
	    &\leq C_{d,g}\, r \beta \abs{\frac{\epsilon \alpha}{\beta}}^{d+2} \leq \frac{C_{d,g}r \epsilon^{d+2}}{\beta^{d+1}}\,, \label{eq: final bound}
    \end{align}
    where we used $\|F_1^{(d+2)}\|_{L^1} = \|f_1^{(d+1)}\|_{L^1} \leq C_d$ and $|\alpha| \leq 1$.

    Combining \eqref{eq: final bound} with the crude bound gives the result.
\end{proof}

\subsubsection*{Constructing the packing}
We next introduce the family of normal directions along which the perturbations will be superimposed.
The natural home for these directions is the real projective space $\mathbb{RP}^{d-1} = \mathbb{S}^{d-1}/\sim$ where $u \sim v$ if $v = \pm u$. We identify $u\in \mathbb{S}^{d-1}$ with the equivalence class $[u]\in\mathbb{R}\mathbb{P}^{d-1}$ equipped with the projective metric
$$
    \delta([u],[v]) = \sqrt{1-\abrac{u,v}^2}\,.
$$
The following is a classical result for the metric entropy of the Grassmannian going back to Pajor \cite{pajor1998metric} (who attributes the result to Szarek \cite{szarek1982nets}; see also \cite{szarek1997metric}).
\begin{lem}[Local entropy bounds for $(\mathbb{RP}^{d-1},\delta)$]\label{lem: packing} For $0 < \epsilon < 1$, there exists an $\epsilon$-packing of $\R\prob^{d-1}$ of size $m$ with respect to the metric $\delta$
\begin{equation}
    \cU_\epsilon = \sbrac{[u_1],\dots, [u_m]} \subseteq \R\prob^{d-1}
\end{equation}
such that for two universal constants $c, C > 0$,
\begin{equation}\label{eq: metric entrop}
    \brac{\frac{c}{\epsilon}}^{(d-1)} \leq m \leq \brac{\frac{C}{\epsilon}}^{(d-1)}.
\end{equation}
Additionally, for some universal $\tilde{c} > 0$, for each $v\in \R\prob^{d-1}$ and $R \geq \epsilon$,
\begin{equation}\label{eq: local entropy}
    |\sbrac{j \in [m]: \delta([u_j], [v])\leq R}| \leq \brac{\frac{\tilde{c} R}{\epsilon}}^{(d-1)}.
\end{equation}
\end{lem}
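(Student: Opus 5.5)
The plan is to pass to the unit sphere and use volume comparison: an $\epsilon$-packing with respect to $\delta$ on $\R\prob^{d-1}$ is the same as a symmetric packing of $\mathbb{S}^{d-1}$ with respect to an equivalent metric. First I note that for $u,v\in\mathbb{S}^{d-1}$, $\delta([u],[v]) = \sin\angle(u,v)$ if we choose signs so that $\langle u,v\rangle\ge 0$. In that case $\min(\|u-v\|,\|u+v\|)$ is comparable to $\delta([u],[v])$ up to absolute constants: $\|u-v\|^2 = 2(1-\langle u,v\rangle)$ and $1-\langle u,v\rangle \le 1-\langle u,v\rangle^2 \le 2(1-\langle u,v\rangle)$ when $\langle u,v\rangle\ge0$. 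Consequently, the $\delta$-ball of radius $R$ about $[v]$ lifts to the union of two antipodal spherical caps whose Euclidean radii are comparable to $R$ (for $R<1$; for $R\ge 1$ the ball is all of $\R\prob^{d-1}$).

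Next, take $\cU_\epsilon$ to be a \emph{maximal} $\epsilon$-separated subset of $(\R\prob^{d-1},\delta)$; such a set is finite by compactness. By maximality it is also an $\epsilon$-net. For the lower bound in~\eqref{eq: metric entrop}, the $\delta$-balls of radius $\epsilon$ about the $[u_j]$ cover $\R\prob^{d-1}$, so the corresponding double caps cover $\mathbb{S}^{d-1}$. Each such cap has $\sigma_d$-measure at most $(C'\epsilon)^{d-1}$, by the standard estimate $\sigma_d(\text{cap of radius } t)\asymp (t)^{d-1}$ up to constants of the form $c^{d-1}$. This yields $m\ge (c/\epsilon)^{d-1}$.

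For the upper bound, the $\delta$-balls of radius $\epsilon/2$ are disjoint, so the lifted caps of Euclidean radius $\gtrsim\epsilon$ are disjoint in $\mathbb{S}^{d-1}$, each of measure $\ge (c'\epsilon)^{d-1}$. This gives $m\le (C/\epsilon)^{d-1}$.

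For the local bound~\eqref{eq: local entropy}, the argument is the same, run inside a ball. Every $j$ with $\delta([u_j],[v])\le R$ has its disjoint radius-$\epsilon/2$ neighborhood contained in the $\delta$-ball of radius $R+\epsilon/2\le 2R$ about $[v]$, since $\delta$ is a metric on $\R\prob^{d-1}$ (the triangle inequality for $\sin$ of angles between lines is standard). Comparing measures, the count is at most $\sigma_d(\text{cap}_{CR})/\sigma_d(\text{cap}_{c\epsilon}) \le (\tilde c R/\epsilon)^{d-1}$. When $2R$ exceeds the range where the cap estimate holds, I will simply bound the numerator by $1$ and use the global bound, absorbing this into $\tilde c$.

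The main obstacle is making the cap-measure estimates hold with constants of the form $c^{d-1}$ uniformly in $d$, since the lemma asserts universal constants. I would derive these from the ratio formula $\sigma_d(\text{cap}_\phi)=\int_0^\phi\sin^{d-2}t\,dt\big/\int_0^\pi\sin^{d-2}t\,dt$. The denominator is $\asymp d^{-1/2}$, and the numerator lies between $(2\phi/\pi)^{d-2}\phi/(d-1)$ and $\phi^{d-1}/(d-1)$. The resulting polynomial-in-$d$ factors are then absorbed into $c^{d-1}$. Alternatively, one can cite Pajor/Szarek as the paper does.
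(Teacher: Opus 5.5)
Your proposal is correct and uses essentially the same volume-comparison argument as the paper. The local bound comes from packing the disjoint $\delta$-balls of radius $\epsilon/2$ into the ball of radius $R+\epsilon/2$ and comparing measures via $\bar\sigma_d(B_\delta([x],t))\asymp$ $t^{d-1}$ (with constants raised to the power $d-1$). The only difference is that you derive these ball-measure estimates from the spherical-cap formula, and you obtain the global bounds on $m$ from maximality and covering. The paper instead cites the literature for both, so your version is self-contained; your treatment of the case $R+\epsilon/2>1$, which forces $R>1/2$, is also sound.
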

\begin{proof}
    The inequality in \eqref{eq: metric entrop} follows directly from the fact that $\R\prob^{d-1}$ is a compact $(d-1)$-dimensional space, see \cite[Proposition 8]{pajor1998metric}. To show the local bound \eqref{eq: local entropy}, if $I_{v,R} = \sbrac{j \in [m]: \delta([u_j],[v]) \leq R}$ and $\cU_\epsilon$ is an $\epsilon$-packing, then the balls $B_\delta([u_j], \epsilon/2)$ for $j \in I_{v,R}$ are pairwise disjoint and contained in $B_\delta([v], R + \epsilon/2)$, so by a volume argument
    $$
        |I_{v,R}| \min_{j}\bar\sigma_{d}(B_{\delta}([u_j], \epsilon/2)) \leq \bar\sigma_{d}(B_\delta([v], R + \epsilon/2)),
    $$
    where $\bar\sigma_{d}$ is the unique probability measure on $\R\prob^{d-1}$ invariant under orthogonal transformations. By Corollary 2.2 of \cite{paouris2014neighborhoods}, $(t/c_1)^{d-1} \leq \bar\sigma_{d}(B_\delta([x],t)) \leq (t/c_2)^{d-1}$ for some $c_1, c_2 > 0$, and so since $R \geq \epsilon$
    $$
        |I_{v,R}| \leq \brac{\frac{\tilde{c}(R + \epsilon/2)}{\epsilon/2}}^{d-1} \leq \brac{\frac{3\tilde{c}R}{\epsilon}}^{d-1},
    $$
    for some constant $\tilde{c} > 0$.
\end{proof}

We now superimpose the perturbations using the packing constructed above.
As in Section~\ref{sec:pancakes}, the eccentricity $\rho/r$ of each slab is chosen to match the separation $\epsilon$ between directions, and $r$ is taken as large as the sparsity constraint allows.
For our construction, we will use the following rescaled parameters: for a constant ${a} > 0$ to be chosen, set
\begin{equation*}
    r := a \epsilon^{(d-2)/d}
\end{equation*}
to be the scale of the radius in $D(u,z,r)$ and 
$$\rho:= r\epsilon = a\epsilon^{2-2/d}$$ to be the scale of the shift in $\la^{(\rho)}_{(u,z,r)}$. 
Let $c > 0$ denote the universal constant appearing in the lower bound of \eqref{eq: metric entrop}, and let $\cU=\sbrac{u_i: i\in [m]}$ be an $\epsilon$-packing of $(\R\prob^{d-1},\delta)$ of size $m = \floor{c^{d-1}\epsilon^{-(d-1)}}$.
Let $\cZ = \sbrac{z_i: i \in [m]}$ be a set of shifts in $B_{1/4}(0)$, which will be chosen as in Lemma \ref{lem: estimate} below. The constant $a$ will be chosen at the end of the proof, depending only on $d$ and $g$; we then take $\epsilon$ sufficiently small depending on $a$.

 For each datum $(u_i,z_i,r)$, define $P_i := D(u_i, z_i, r)$ and $\cE^{(\rho)}$ to be the union of $\rho$-slabs
\begin{equation}
    \cE^{(\rho)} := \bigcup_{j=1}^m P_j + [-\rho,\rho]u_j.
\end{equation}
Define the signed measure
\begin{equation}
    \sigma_\epsilon := \frac{1}{m}\sum_{i=1}^m (\la^{(\rho)}_{(u_i, z_i, r)} - \la_{(u_i,z_i, r)}).
\end{equation}
For every $i \in [m]$, $\la_{(u_i,z_i,r)}(\R^d\setminus P_i) = 0$ and since $\epsilon \in (0,1)$, we also have  $\supp(\la^{(\rho)}_{(u_i,z_i,r)}) \subseteq \cE^{(\rho)}$.
Notice that $\sigma_\epsilon$ is a nonzero signed measure satisfying $\sigma_\epsilon(\R^d) = 0,$ and hence, setting $M_\epsilon := |\sigma_\epsilon^+|,$ we can define probability measures
\begin{align}
    \mu_\epsilon := \frac{\sigma^+_\epsilon}{M_\epsilon}, \quad \nu_\epsilon := \frac{\sigma^{-}_\epsilon}{M_\epsilon}\,.
\end{align}
Note that $\cE^{(\rho)}\subseteq B_1(0)$, since for any $s \in [-1,1]$ and $j \in [m]$, if $y \in B_r(0) \cap u_j^\perp$, by our choice of $\rho$ and $r$,
\begin{equation}
	\norm{z_j + y + s \rho u_j} \leq \frac{1}{4}+ |s|\rho + r \leq 1.
\end{equation}
Therefore $\mu_\epsilon, \nu_\epsilon \in \cP(B_1(0))$.
In Section~\ref{sec:sharp_proof} we will show that they achieve the sharp scale separations in \eqref{eq: contrad scale}, i.e., that $W_1(\mu_\epsilon, \nu_\epsilon)\geq \kappa_d \epsilon^{2-2/d}$ and $\MSW_{1,1}(\mu_\epsilon, \nu_\epsilon) \leq \kappa^{\prime}_d\epsilon^{(d+1)-2/d}$.

\subsubsection*{Constructing the shifts}
We now choose $\cZ$ to ensure that the perturbations planted in different directions do not interact.
The following result shows that when $m \rho r^{d-1}$ is smaller than a sufficiently small constant, the distance of a point $x$ in the support of $\la^{(\rho)}_{(u_i, z_i, r)}$ to $P_i$ is close to the distance from $x$ to the union $\bigcup_{j=1}^m P_j$; in other words, this guarantees that typical points associated with the direction $u_i$ are not close to the mixtures associated with any other direction.

\begin{lem}\label{lem: estimate}
Let $\Phi(x) = d(x,\bigcup_{j=1}^m P_j)$ be the distance function and let $0 < \rho < r$. For any collection $\cU = \sbrac{u_i: i\in[m]}\subseteq \mathbb{S}^{d-1}$, there exists a set $\cZ = \sbrac{z_i: i \in [m]} \subseteq B_{1/4}(0)$ such that
    \begin{equation}\label{eq: dist bound}
        \cD_{\cZ} := \frac{1}{m}\sum_{j=1}^m\int_{\R^d} \int_{-1}^1  |g(s)||\Phi(x + s\rho u_j) - \rho|s||\,ds\,d\la_{(u_j,z_j,r)} \leq C_{g,d} m\rho^2 r^{d-1}
    \end{equation}
    where $C_{g,d}$ is a constant depending on $g$ and $d$.
\end{lem}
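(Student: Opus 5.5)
We will use the probabilistic method: draw the shifts $z_1,\dots,z_m$ independently and uniformly from $B_{1/4}(0)$, bound $\mathbb{E}\,\cD_{\cZ}$, and conclude that some realization satisfies \eqref{eq: dist bound}.

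The first step is a pointwise reduction. Fix $j$ and a point $x\in P_j$ (so $x\in\supp\la_{(u_j,z_j,r)}$), and set $y:=x+s\rho u_j$. Since $y$ sits at distance $\rho|s|$ directly above $P_j$, we have $\Phi(y)\le d(y,P_j)=\rho|s|$. Hence
\[
|\Phi(y)-\rho|s||=\rho|s|-\min_{i}d(y,P_i).
\]
This quantity is nonzero only when $d(y,P_i)<\rho|s|\le\rho$ for some $i\neq j$, and it is always at most $\rho$. Therefore
\[
|\Phi(y)-\rho|s||\le \rho\sum_{i\ne j}\mathbbm{1}\{y\in P_i+B_\rho\}.
\]
Substituting this into $\cD_{\cZ}$ and using $\int|g|\le C_g$, we obtain
\[
\cD_{\cZ}\le \frac{\rho}{m}\sum_{j}\sum_{i\neq j}\int_{-1}^1|g(s)|\int \mathbbm{1}\{x+s\rho u_j\in P_i+B_\rho\}\,d\la_{(u_j,z_j,r)}(x)\,ds.
\]

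The second step is to take the expectation over $z_i$, conditionally on $z_j$, for each pair $i\ne j$. The point $x+s\rho u_j$ is then fixed, and we need the probability that it lies in the random set $z_i+(D(u_i,0,r)+B_\rho)$. This set has Lebesgue volume at most $C_d r^{d-1}\rho$, since $\rho<r$ and it is a $\rho$-neighborhood of a $(d-1)$-disk of radius $r$. Because $z_i$ is uniform on $B_{1/4}$ with density $\lesssim_d 1$, the probability is at most $C_d r^{d-1}\rho$. Independence of $z_i$ and $z_j$ justifies this conditioning, and Fubini–Tonelli lets us swap the expectation with the integrals. Since $\la$ is a probability measure, we get
\[
\mathbb{E}\,\cD_{\cZ}\le \frac{\rho}{m}\, m(m-1)\, C_g C_d\, r^{d-1}\rho\le C_{g,d}\,m\rho^2r^{d-1}.
\]
Consequently some $\cZ\subseteq B_{1/4}(0)$ achieves this bound.

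The main point requiring care is the pointwise reduction in the first step. We must check that the error vanishes unless another disk comes within $\rho$, which uses only the trivial inequality $\Phi\le d(\cdot,P_j)$, and that the error is capped by $\rho$ rather than by something larger. The bound is uniform over all directions $u_i$ and needs no property of the packing, which matches the hypothesis that $\cU$ is arbitrary. Measurability of $\cD_{\cZ}$ in $\cZ$ is routine, since $\Phi$ depends continuously on the $z_i$.
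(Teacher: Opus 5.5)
Your proof is correct and follows essentially the same route as the paper. Both arguments use i.i.d.\ uniform shifts in $B_{1/4}(0)$, bound the error by a sum of indicators that the lifted point $x+s\rho u_j$ comes close to another disk $P_i$, and control each pairwise probability by the bounded density times the $O_d(r^{d-1}\rho)$ volume of the $\rho$-neighborhood of a $(d-1)$-disk. The differences are cosmetic: you cap the error by $\rho$ rather than $\rho|s|$, so your constant involves $\int|g|$ instead of $\int s^2|g(s)|\,ds$, and you condition on $z_j$ and use the density of $z_i$ rather than the density of $z_i - z_j$.
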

\begin{proof}
To prove \eqref{eq: dist bound}, we proceed with a probabilistic argument. Let $\cZ$ be an $\iid$ random sample $z_1,\dots,z_m$ from $\Unif(B_{1/4}(0))$. Notice that $\Phi(x+s\rho u_j) \leq |s|\rho,$ for any $x \in \cup_{j=1}^m P_j$, and furthermore,
\begin{equation}
    (|s|\rho - \Phi(x+s\rho u_j))_+\leq |s|\rho\sum_{i\neq j}\indic_{\sbrac{d(x+s\rho u_j,P_i)\leq |s|\rho}},
\end{equation}
so it suffices to control for each $j\in [m]$ and $x = z_j + y$ where $y\in B_r(0) \cap u_j^\perp$ the pairwise probability for $i\neq j$
\begin{equation}\label{eq: prob factor}
    \Psub{\cZ}{d\brac{z_j + y + s\rho u_j, z_i + (B_r(0) \cap u_i^\perp)}\leq |s|\rho}.
\end{equation}
The random variable $z_i - z_j$ has a bounded density $f_d$; that is, there exists a constant $C_d > 0$ such that $\norm{f_d}_\infty \leq C_d$. By shift-invariance, \eqref{eq: prob factor} can be rewritten as
\begin{align*}
    \Psub{\cZ}{d\brac{z_j - z_i, (B_r(0) \cap u_i^\perp)-y-s\rho u_j} \leq |s|\rho} &\leq C_d |\{x: d(x, B_r(0)\cap u^{\perp}_i) \leq |s|\rho\}| \\
    &= C_d|\{x: x \in (B_r(0) \cap u_i^\perp) + B_{|s|\rho}(0)\}|\\
    &\leq C_d r^{d-1}|s|\rho,
\end{align*}
where in the last inequality we used the containment $$(B_r(0)\cap u_i^\perp) + B_{|s|\rho}(0) \subseteq (B_{r+|s|\rho}(0) \cap u_i^\perp) \times [-|s|\rho, |s|\rho]$$ to bound the volume. Therefore, summing over all $i\in [m]$ with $i \neq j$ and integrating against $|g(s)|\,ds$,
\begin{equation}
    \E{\cD_{\cZ}} \leq m C_d \tilde{c}_g \rho^2 r^{d-1}, \qquad \tilde c_g := \int_{-1}^1 s^2 |g(s)|\,ds,
\end{equation}
and we can conclude by the probabilistic method the existence of a fixed set $\cZ$ satisfying \eqref{eq: dist bound} with $C_{g,d} = C_d \tilde{c}_g$.
\end{proof}

\subsubsection*{Normalization}
Our last lemma records the following bounds on the size of the normalizing constant $M_\epsilon$ appearing in the definition of $(\mu_\epsilon, \nu_\epsilon)$; it guarantees that passing between the signed measure $\sigma_\epsilon$ and the pair of probability measures $(\mu_\epsilon, \nu_\epsilon)$ costs only a constant factor.
\begin{lem}\label{lem: norm const}
    For $\sigma_\epsilon$ and $M_\epsilon$ defined as before, we have that for a constant $C_g > 0$ depending only on $g$,
    \begin{equation}
        1 \leq M_\epsilon \leq C_g.
    \end{equation}
\end{lem}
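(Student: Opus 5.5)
Since $\sigma_\epsilon(\R^d)=0$, we have $|\sigma_\epsilon^+| = |\sigma_\epsilon^-|$, and hence $M_\epsilon = \tfrac12 \|\sigma_\epsilon\|_{\mathrm{TV}}$. The plan is to bound the total variation of $\sigma_\epsilon$ from both sides.

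\emph{Upper bound.} By the triangle inequality for total variation,
\begin{equation*}
    \|\sigma_\epsilon\|_{\mathrm{TV}} \leq \frac1m\sum_{i=1}^m \brac{\|\la^{(\rho)}_{(u_i,z_i,r)}\|_{\mathrm{TV}} + \|\la_{(u_i,z_i,r)}\|_{\mathrm{TV}}}.
\end{equation*}
Each $\la_{(u_i,z_i,r)}$ is a probability measure, so its total variation is $1$. The measure $\la^{(\rho)}_{(u_i,z_i,r)} = \int g(s)(\tau_{s\rho u_i})_\#\la_{(u_i,z_i,r)}\,ds$ is a $g$-weighted average of probability measures, so its total variation is at most $\int_{-1}^1|g(s)|\,ds =: \|g\|_{L^1}$. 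Therefore $M_\epsilon \leq \tfrac12(1+\|g\|_{L^1})$, and we may take $C_g := \tfrac12(1+\|g\|_{L^1})$.

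\emph{Lower bound.} It suffices to exhibit a set $A$ with $\sigma_\epsilon(A) \leq -1$, since then $M_\epsilon = |\sigma_\epsilon^-| \geq -\sigma_\epsilon(A) \geq 1$. The natural candidate is $A := \bigcup_{j=1}^m P_j$, which is a finite union of $(d-1)$-dimensional disks and so has Lebesgue measure zero.
\begin{itemize}
    \item Each $\la_{(u_i,z_i,r)}$ is concentrated on $P_i \subseteq A$, so $\tfrac1m\sum_i \la_{(u_i,z_i,r)}(A) = 1$.
    \item We claim $\la^{(\rho)}_{(u_i,z_i,r)}(A) = 0$ for every $i$. This measure is absolutely continuous with respect to Lebesgue measure on $\R^d$: it is the product of the $\cH^{d-1}$-density on $u_i^\perp$ with the density $g(s)/\rho$ along $u_i$, obtained via the change of variables $t = s\rho$. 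Since $g\in C_c^\infty$ is bounded, this product density is integrable, and the measure cannot charge the null set $A$.
\end{itemize}
Combining the two items gives $\sigma_\epsilon(A) = 0 - 1 = -1$, and hence $M_\epsilon \geq 1$.

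The only point requiring care is the absolute continuity claim. It follows from Fubini in coordinates adapted to $u_i$: writing $x = z_i + y + t u_i$ with $y \in u_i^\perp$, the measure has density $\propto (1-\|y\|^2/r^2)_+^{d+1}\, g(t/\rho)/\rho$ with respect to $d\cH^{d-1}(y)\,dt$, which is Lebesgue measure on $\R^d$. This argument holds for any $\epsilon>0$ and uses nothing about the choice of $\cZ$. This is the singular-part feature mentioned in the construction overview, and it is what makes the normalization constant at least $1$.
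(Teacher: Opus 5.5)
Your proof is correct and follows the paper's argument. The upper bound is the same triangle-inequality estimate; yours is sharper by a factor of $\tfrac12$ because you use $M_\epsilon = \tfrac12\|\sigma_\epsilon\|_{\mathrm{TV}}$. The lower bound tests against the same set $\bigcup_j P_j$. The only difference is how you show $\lambda^{(\rho)}_{(u_i,z_i,r)}(\bigcup_j P_j)=0$. The paper argues slice by slice that for $s\neq 0$ each translated disk $P_i+s\rho u_i$ misses $P_i$ and meets each $P_j$, $j\neq i$, in a $(d-2)$-dimensional set, using $[u_i]\neq[u_j]$. Your observation that $\lambda^{(\rho)}_{(u_i,z_i,r)}$ is absolutely continuous with respect to Lebesgue measure on $\R^d$ gives the claim directly, without needing the directions to be distinct.
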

\begin{proof}
    Since $\sigma_\epsilon(\R^d) = 0$, we have $|\sigma_\epsilon^+| = |\sigma_\epsilon^-| = M_\epsilon$.
    To show the upper bound, by the triangle inequality for signed measures,
    \begin{align}
        M_\epsilon &\leq \frac{1}{m}\sum_{i=1}^m |(\la^{(\rho)}_{(u_i,z_i,r)} - \la_{(u_i,z_i,r)})^+| \nonumber\\
        &\leq \frac{1}{m}\sum_{i=1}^m \brac{\la_{(u_i,z_i,r)}(\R^d) + |\la^{(\rho)}_{(u_i,z_i,r)}|(\R^d)} \leq 1 + \norm{g}_{L^1([-1,1])} = C_g.
    \end{align}
    For the lower bound, we claim that $\la^{(\rho)}_{(u_i,z_i,r)}(\cup_{j=1}^m P_j) = 0$ for every $i \in [m]$. Indeed, for $s \neq 0$, the translated disk $P_i + s\rho u_i$ is disjoint from $P_i$ (the translation is orthogonal to $P_i$), and for $j \neq i$ it meets the disk $P_j$ in a set of dimension at most $d - 2$ (the disks are not parallel, since $[u_i] \neq [u_j]$), which is $(\tau_{s\rho u_i})_\#\la_{(u_i,z_i,r)}$-null. Since $s = 0$ is Lebesgue-null in the $ds$ integral defining $\la^{(\rho)}_{(u_i,z_i,r)}$, the claim follows. On the other hand, $\frac 1m \sum_{i} \la_{(u_i,z_i,r)}(\cup_j P_j) = 1$, so the restriction of $\sigma_\epsilon$ to $\cup_j P_j$ is nonpositive with total mass $-1$, whence
    \begin{equation}
        M_\epsilon = |\sigma^-_\epsilon| \geq \sigma_\epsilon^-\brac{\cup_{j=1}^m P_j} =1.
    \end{equation}
    Therefore, we have both bounds and conclude.
\end{proof}
\subsection{Proof that the exponent $\frac{2}{d + 2}$ is sharp}\label{sec:sharp_proof} Given our construction $(\mu_\epsilon,\nu_\epsilon)$, it remains to establish the two bounds in \eqref{eq: contrad scale}; we split the proof into the lower and upper bounds. For the sake of clarity, given the sets $\cU$ and $\cZ$, we write $\la_{(u_i,z_i,r)} = \la_i$ and $\la^{(\rho)}_{(u_i,z_i,r)} = \la^{(\rho)}_{i}$. Recall that $\cU$ is an $\epsilon$-packing of $\R\mathbb{P}^{d-1}$ from Lemma \ref{lem: packing} and $\cZ$ is chosen according to Lemma \ref{lem: estimate}. Both measures are supported in $\cE^{(\rho)} \subseteq B_1(0)$.

We begin with the lower bound. By homogeneity of $W_1$ in Definition \ref{def: 1 homg} and Lemma \ref{lem: norm const}, it suffices to prove a lower bound of order $\epsilon^{2-2/d}$ on $W_1(\sigma^+_\epsilon, \sigma^-_\epsilon)$, since
$$W_1(\mu_\epsilon, \nu_\epsilon) = M_\epsilon^{-1} W_1(\sigma_\epsilon^+, \sigma_\epsilon^-)$$
and the normalizing constant adds only a factor $C_g$ independent of $\epsilon$. The function $\Phi(x) := d(x, \bigcup_{j=1}^m P_j)$ is $1$-Lipschitz and vanishes on $\bigcup_{j=1}^m P_j$. Therefore, by Kantorovich duality,
\begin{align*}
W_1(\sigma^+_\epsilon,\sigma^-_\epsilon) \geq \int_{\R^d} \Phi \,d(\sigma^+_\epsilon - \sigma^-_\epsilon)
    &= \frac{1}{m}\sum_{i=1}^m\int_{\R^d} \int_{-1}^1  g(s)\Phi(x + s\rho u_i)\,ds\,d\la_i \\
    &= c_g \rho + \frac{1}{m}\sum_{i=1}^m\int_{\R^d} \int_{-1}^1  g(s)(\Phi(x + s\rho u_i) - \rho|s|)\,ds\,d\la_i \\
    &\geq c_g \rho - C_{g,d}m\rho^2 r^{d-1}\,,
\end{align*}
where $c_g = \int_{-1}^1|s|g(s)\,ds$. Here the first equality uses that $\Phi$ vanishes on the support of each $\la_i$, together with the change of variables $x \mapsto x + s\rho u_i$, and the final inequality follows from the bound in Lemma \ref{lem: estimate}. By our choice of $m$ and $r$, the exponent of $\epsilon$ in $m\rho r^{d-1}$ vanishes:
$$
    m \rho r^{d-1} \leq c^{d-1} a^{d-1}\, a\, \epsilon^{2 - \frac{2}{d}-(d-1) + \frac{(d-1)(d-2)}{d}} = c^{d-1} a^{d}\,,
$$
and choosing $a$ small enough that $C_{g,d}\, c^{d-1} a^d < \frac{c_g}{2}$ gives $C_{g,d} m \rho^2 r^{d-1} < \frac{c_g}2 \rho$. Therefore, $W_1(\sigma_\epsilon^+, \sigma_\epsilon^-) \geq \frac{c_g}{2}\rho = \frac{ac_g}{2}\epsilon^{2-2/d}$, which proves the lower bound in \eqref{eq: contrad scale} with $\kappa_d = \frac{a c_g}{2 C_g}$.

Next, we complete the upper bound. As before, by homogeneity and Lemma \ref{lem: norm const} (using $M_\epsilon \geq 1$), it suffices to bound $\sup_{\theta \in \mathbb{S}^{d-1}}W_1((\theta_{\#}\sigma_\epsilon)^+, (\theta_\#\sigma_\epsilon)^-) = \sup_{\theta \in \mathbb{S}^{d-1}}\norm{\theta_\#\sigma_\epsilon}_{\text{KR}}$. Fix $\theta \in \mathbb{S}^{d-1}$ and let $\beta_i = \delta([u_i], [\theta]) = (1 - \abrac{u_i, \theta}^2)^{1/2}$. By the triangle inequality applied to $\norm{\cdot}_{\text{KR}}$ combined with Lemma \ref{lem: upper w_1},
\begin{align*}
    \norm{\theta_\#\sigma_\epsilon}_{\text{KR}} \leq \frac{1}{m}\sum_{i=1}^m \norm{\theta_\#(\la^{(\rho)}_i - \la_i)}_{\text{KR}}
    &{\leq} \frac{C_{d,g}}{m}\sum_{i=1}^m  r\min\sbrac{\epsilon, \frac{\epsilon^{d+2}}{\beta_i^{d+1}}}\,.
\end{align*}
By \eqref{eq: local entropy}, for every $\ell \geq 0$ there are at most $(\tilde c\, 2^{\ell+1})^{d-1}$ indices with $\beta_i \leq 2^{\ell} \epsilon$.
We decompose the sum accordingly: the indices with $\beta_i \leq \epsilon$ number at most $(2\tilde c)^{d-1}$ and contribute at most $r\epsilon$ each, while each index in the $\ell$th dyadic annulus $\sbrac{i : 2^{\ell}\epsilon < \beta_i \leq 2^{\ell+1}\epsilon}$ contributes at most $r\epsilon^{d+2}(2^{\ell}\epsilon)^{-(d+1)} = 2^{-\ell(d+1)}\, r\epsilon$. Therefore
\begin{align*}
	\sum_{i=1}^m r\min\sbrac{\epsilon, \frac{\epsilon^{d+2}}{\beta_i^{d+1}}} &\leq (2\tilde c)^{d-1} r\epsilon + \sum_{\ell = 0}^\infty (\tilde c\, 2^{\ell+1})^{d-1}\, 2^{-\ell(d+1)}\, r\epsilon \\
	&= (2\tilde c)^{d-1}\Big(1 + \sum_{\ell=0}^\infty 2^{-2\ell}\Big) r\epsilon \lesssim_d r\epsilon\,,
\end{align*}
and we obtain
\begin{equation}
	\norm{\theta_\#\sigma_\epsilon}_{\text{KR}} \lesssim_{d, g} \frac{r \epsilon}{m} \asymp \epsilon^{(d+1) - 2/d}\,.
\end{equation}
Since the bound is uniform in $\theta$, this proves the upper bound in \eqref{eq: contrad scale} and hence completes the proof of Theorem~\ref{thm: sharp exp}. \qed

\section{Comparison bounds under Condition~\ref{cond:voronoi}}\label{sec:discrete}

In this section we prove Theorems \ref{thm:discrete_bound} and \ref{thm:kdim_bound}. Throughout, $\mu \in \cP_p(\R^d)$ and $\nu$ is a probability measure supported on a countable closed set $\cY \subseteq \R^d$.
Since both sides of \eqref{eq:discrete_bound} and \eqref{eq:kdim_bound} vanish when $\mu = \nu$, and Condition~\ref{cond:voronoi} forces $\mu = \nu$ whenever $\mu(\cY) = 1$, we may assume $\mu(\R^d \setminus \cY) > 0$.
Since Theorem~\ref{thm:discrete_bound} is a special case of Theorem~\ref{thm:kdim_bound}, we proceed directly to the $k$-dimensional case.

The proof of Theorem~\ref{thm:kdim_bound} is based on the simple observation that Condition~\ref{cond:voronoi} gives rise to a closed-form expression for $W_p^p(\mu, \nu)$ and a compatible lower bound on $W_p^p(U_\#\mu, U_\#\nu)$ for any $U \in \mathfrak{G}_{d,k}$.

\begin{lem}\label{lem:voronoi_reduction}
	If Condition~\ref{cond:voronoi} holds, then for every $p \geq 1$,
	\begin{equation}
		W_p^p(\mu, \nu) = \int_{\R^d} d(x, \cY)^p \dd\mu(x)\,.
	\end{equation}
\end{lem}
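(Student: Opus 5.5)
The proof is a two-sided squeeze: an upper bound from the coupling supplied by Condition~\ref{cond:voronoi}, and a matching lower bound that holds for every coupling because every point of $\cY$ is at least $d(x,\cY)$ away from $x$.

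\emph{Upper bound.} Let $\gamma$ be the coupling from Condition~\ref{cond:voronoi}. Since $\|x-y\| = d(x,\cY)$ for $\gamma$-a.e.\ $(x,y)$, we get
\[
W_p^p(\mu,\nu) \le \int \|x-y\|^p \, d\gamma(x,y) = \int d(x,\cY)^p \, d\gamma(x,y) = \int d(x,\cY)^p \, d\mu(x).
\]
The last step uses only that the integrand depends on $x$ alone and that the first marginal of $\gamma$ is $\mu$.

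\emph{Lower bound.} Take an arbitrary coupling $\pi \in \Gamma(\mu,\nu)$. Its second marginal is $\nu$, and $\nu(\R^d\setminus\cY)=0$, so $\pi(\R^d\times\cY)=1$. For every $(x,y)$ with $y\in\cY$ we have $\|x-y\| \ge d(x,\cY)$ by definition of the distance to a set. Integrating against $\pi$ gives
\[
\int \|x-y\|^p \, d\pi \;\ge\; \int d(x,\cY)^p \, d\mu(x).
\]
Taking the infimum over $\pi$ yields the reverse inequality, which proves the lemma. It also shows that $\gamma$ is optimal for every $p \ge 1$.

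The only technical points are measurability and finiteness. The map $x\mapsto d(x,\cY)$ is $1$-Lipschitz, hence Borel. Because $\cY$ is closed, the minimum defining $d(x,\cY)$ is attained, and the integral is finite since $\mu\in\cP_p$ and $\nu$ is supported on $\cY$. None of these is a real obstacle; the only step needing care is noting that $\pi$ is concentrated on $\R^d\times\cY$, which is where the hypothesis that $\nu$ is supported on the closed set $\cY$ enters.
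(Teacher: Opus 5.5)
Your proposal is correct and follows the paper's proof: it uses the same lower bound (every coupling is concentrated on $\R^d\times\cY$, where $\norm{x-y}\ge d(x,\cY)$) and the same upper bound from the coupling supplied by Condition~\ref{cond:voronoi}. Your additional measurability and finiteness remarks are harmless but not needed. In particular, finiteness follows from $d(x,\cY)\le\norm{x-y_0}$ for any fixed $y_0\in\cY$ together with $\mu\in\cP_p(\R^d)$.
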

\begin{proof}
	Every coupling $\gamma' \in \Gamma(\mu, \nu)$ is supported on $\R^d \times \cY$, so
	$$
		\int \norm{x-y}^p \dd\gamma'(x,y) \geq \int d(x, \cY)^p \dd\gamma'(x,y) = \int d(x,\cY)^p \dd\mu(x)\,,
	$$
	and hence $W_p^p(\mu, \nu) \geq \int d(x, \cY)^p \dd\mu$.
	Conversely, the coupling $\gamma$ furnished by Condition~\ref{cond:voronoi} achieves this bound, by~\eqref{eq:voronoi}.
\end{proof}

\begin{lem}\label{lem:projected_lb}
	For $1 \leq k < d$ and $U \in \mathfrak{G}_{d,k}$, write $U\cY := \sbrac{Uy : y \in \cY} \subseteq \R^k$ and $d(z, U\cY) := \inf_{y \in \cY}\norm{z - Uy}$. Then
	\begin{equation}
		W_p^p(U_\#\mu, U_\#\nu) \geq \int_{\R^d} d(Ux, U\cY)^p \dd\mu(x)\,.
	\end{equation}
	In particular, for $\theta \in \mathbb{S}^{d-1}$, writing $\cY_\theta := \sbrac{\abrac{\theta, y}: y \in \cY}$,
	\begin{equation}
		W_p^p(\theta_\#\mu, \theta_\#\nu) \geq \int_{\R^d} d(\abrac{\theta, x}, \cY_\theta)^p \dd\mu(x)\,.
	\end{equation}
\end{lem}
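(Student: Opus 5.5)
The argument mirrors Lemma~\ref{lem:voronoi_reduction}, now applied to the pushed-forward measures. Let $\gamma' \in \Gamma(U_\#\mu, U_\#\nu)$ be an arbitrary coupling. Since $\nu$ is concentrated on $\cY$, the measure $U_\#\nu$ is concentrated on the countable set $U\cY$. Hence $\gamma'$ is concentrated on $\R^k \times U\cY$, and for $\gamma'$-a.e.\ $(z,w)$ we have
$$\norm{z-w} \geq d(z, U\cY).$$

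Integrating this inequality against $\gamma'$ gives
$$\int \norm{z-w}^p \dd\gamma'(z,w) \geq \int d(z,U\cY)^p \dd\gamma'(z,w) = \int_{\R^k} d(z,U\cY)^p \dd(U_\#\mu)(z),$$
where the last equality holds because the first marginal of $\gamma'$ is $U_\#\mu$. The change of variables for pushforwards then rewrites the right-hand side as $\int_{\R^d} d(Ux,U\cY)^p\dd\mu(x)$.

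Taking the infimum over all $\gamma'$ yields the first claim. No use of Condition~\ref{cond:voronoi} is needed: we only need a lower bound here, not equality.

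Two technical points need checking.

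\emph{Measurability.} The function $z \mapsto d(z,U\cY)$ is an infimum of distances to a set, so it is $1$-Lipschitz and hence continuous. This holds even if $U\cY$ fails to be closed, which can happen since $U$ need not be proper on $\cY$.

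\emph{Support of $U_\#\nu$.} One should use that $U_\#\nu$ gives full mass to $U\cY$, rather than any statement about its topological support. Full mass is immediate because $\nu(\cY)=1$.

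The one-dimensional statement is the special case $k=1$ with $U=\theta^\top$. In that case $U\cY = \cY_\theta$ and $Ux = \abrac{\theta,x}$.

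I expect no real obstacle. The only care needed is to phrase the argument via "concentrated on" rather than via closed supports.
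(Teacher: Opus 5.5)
Your proposal is correct and follows the same argument as the paper. Every coupling of $(U_\#\mu, U_\#\nu)$ is concentrated on $\R^k \times U\cY$, so its cost dominates $\int d(z,U\cY)^p\, d(U_\#\mu)(z) = \int d(Ux,U\cY)^p\, d\mu(x)$. Your extra remarks are valid refinements of the paper's one-line proof rather than a different route: continuity of $z\mapsto d(z,U\cY)$ even when $U\cY$ is not closed, and phrasing via full mass instead of topological support.
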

\begin{proof}
	The measure $U_\#\nu$ is supported on $U\cY$, so every coupling of $(U_\#\mu, U_\#\nu)$ is supported on $\R^k \times U\cY$ and therefore has cost at least $\int d(z, U\cY)^p \dd (U_\#\mu)(z) = \int d(Ux, U\cY)^p \dd\mu(x)$.
\end{proof}

We now proceed to the proof of the main theorem.

\begin{proof}[Proof of Theorem~\ref{thm:kdim_bound}]
	Abbreviate $K := K^{(k)}_{\mu, \nu}$; we may assume $K < \infty$.
	By Lemma~\ref{lem:projected_lb} and Fubini--Tonelli,
	\begin{equation}
		\SW_{p,k}^p(\mu, \nu) = \int_{\mathfrak{G}_{d,k}} W_p^p(U_\#\mu, U_\#\nu) \dd\sigma_{d,k}(U) \geq \int_{\R^d} \Esub{U \sim \sigma_{d,k}}{d(Ux, U\cY)^p} \dd\mu(x)\,.
	\end{equation}

	It therefore suffices to prove a suitable lower bound on $\Esub{U}{d(Ux, U\cY)^p}$.
	We employ an elementary anticoncentration bound on the Stiefel manifold, whose proof is deferred.
	
	\begin{lem}[Anti-concentration for $\sigma_{d,k}$]\label{lem: anti conc}
		Let $1 \leq k < d$ and let $U \sim \sigma_{d,k}$. There exists a universal constant $C_0 > 0$ such that for any $v\in \R^d$ and $t > 0$,
		\begin{equation}
			\Psub{U \sim \sigma_{d,k}}{\norm{Uv} < t\sqrt{k/d}\,\norm{v}} \leq (C_0 t)^{k}.
		\end{equation}
	\end{lem}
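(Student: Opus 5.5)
Fix $v \neq 0$; by homogeneity we may take $\norm{v} = 1$. By orthogonal invariance of $\sigma_{d,k}$, the law of $\norm{Uv}^2$ is the same as that of $\norm{P_k w}^2$, where $w$ is uniform on $\mathbb{S}^{d-1}$ and $P_k$ is the coordinate projection onto the first $k$ coordinates. Equivalently, with $U$ fixed and $v$ rotated uniformly, $Uv$ is the first $k$ coordinates of a uniform point on the sphere. We then use the Gaussian representation $w = g/\norm{g}$ with $g \sim N(0, I_d)$. This gives
$$\norm{Uv}^2 \overset{d}{=} \frac{\norm{g_{1:k}}^2}{\norm{g_{1:k}}^2 + \norm{g_{k+1:d}}^2},$$
which is a $\mathrm{Beta}(k/2,(d-k)/2)$ random variable.

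The plan is to bound the probability that this Beta variable falls below $s := t^2 k/d$. The case $t \geq 1/C_0$ is trivial, since then the right side is at least $1$, so we may assume $t$ is small.

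There are two routes. The first works directly with the Beta density $B(k/2,(d-k)/2)^{-1} x^{k/2-1}(1-x)^{(d-k)/2-1}$. Drop the factor $(1-x)^{(d-k)/2-1}$ when the exponent is nonnegative; when $d-k=1$ it is harmless on $[0,1/2]$. This gives
$$\mathbb{P}(\mathrm{Beta} < s) \leq \frac{2}{k}\, \frac{s^{k/2}}{B(k/2,(d-k)/2)}.$$
Stirling's formula then gives $1/B(k/2,(d-k)/2) = \Gamma(d/2)/(\Gamma(k/2)\Gamma((d-k)/2)) \lesssim (C d/k)^{k/2}$ up to polynomial factors absorbed into $C^k$. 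Multiplying, $s^{k/2}(d/k)^{k/2} = t^k$, which yields the bound $(C_0 t)^k$.

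Tracking the Gamma ratio uniformly in $k$ and $d-k$ is the fiddly step. To avoid it, I would instead use the second route, a Gaussian-comparison argument, and split by events:
$$\mathbb{P}\bigl(\norm{Uv} < t\sqrt{k/d}\bigr) \leq \mathbb{P}\bigl(\norm{g_{1:k}}^2 < 4t^2 k\bigr) + \mathbb{P}\bigl(\norm{g}^2 > 4d\bigr).$$
The first term is a chi-square small-ball probability. Its density is bounded by $x^{k/2-1}/(2^{k/2}\Gamma(k/2))$, so the term is at most $(4t^2k)^{k/2}/(2^{k/2}\Gamma(k/2+1)) \leq (Ct)^k$ by Stirling.

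The second term is at most $e^{-cd}$ by chi-square concentration. This is the main obstacle: it is not of the form $(C_0t)^k$ when $t$ is extremely small. To fix this, avoid the union bound and condition on $\norm{g_{k+1:d}}^2 = R$, which is independent of $g_{1:k}$. The event becomes $\norm{g_{1:k}}^2 < s(\norm{g_{1:k}}^2 + R)/(1-s)$, which implies $\norm{g_{1:k}}^2 < 2sR$ for $s \leq 1/2$. The conditional probability is then at most $(2sR)^{k/2}/(2^{k/2}\Gamma(k/2+1))$. Take expectations using $\mathbb{E} R^{k/2} = 2^{k/2}\Gamma((d-k+k)/2)/\Gamma((d-k)/2) \leq (Cd)^{k/2}$, which holds since $\Gamma(x+a)/\Gamma(x) \leq (x+a)^a$. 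Combining, we get
$$\mathbb{P} \leq \frac{(2s\cdot Cd)^{k/2}}{\Gamma(k/2+1)} \leq \frac{(C' t^2 k)^{k/2}}{(k/2e)^{k/2}} = (C_0 t)^k,$$
uniformly in $1 \leq k < d$. This conditioning argument is the route I would write up.
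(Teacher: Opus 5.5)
Your argument is correct, and in substance it is the paper's proof. The paper takes your first route. It writes $\|Ue_1\|^2\sim\mathrm{Beta}(k/2,(d-k)/2)$ and bounds $(1-u)^{(d-k)/2-1}\le 2$ on $[0,\eta]$ with $\eta=(k/d)t^2\le 1/4$, which also covers $d-k=1$. It then controls the Gamma ratio via $\psi(u)\le\log u$, giving $\Gamma(d/2)\le (d/2)^{k/2}\Gamma((d-k)/2)$, and finishes with Stirling.

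Your conditioning route does not actually sidestep the Gamma ratio you call fiddly. The moment $\mathbb{E}R^{k/2}=2^{k/2}\Gamma(d/2)/\Gamma((d-k)/2)$ is exactly that ratio, and your inequality $\Gamma(x+a)/\Gamma(x)\le (x+a)^a$ is the same digamma estimate. Unwinding, your bound reads $\frac{2}{k}B(k/2,(d-k)/2)^{-1}(2s)^{k/2}$, which is the paper's Beta-CDF estimate up to a factor $2^{k/2-1}$. So the Gaussian representation only saves you from handling the factor $(1-x)^{(d-k)/2-1}$ by hand.

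One small slip: with $X=\|g_{1:k}\|^2$, the event $X/(X+R)<s$ is equivalent to $X<sR/(1-s)$, which gives $X<2sR$ for $s\le 1/2$. The expression $s(X+R)/(1-s)$ you wrote keeps a stray $X$ on the right-hand side and would only yield $X<sR/(1-2s)$.
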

	
	For $x \notin \cY$, write $S_k(x) := \big(\sum_{y \in \cY} \norm{x-y}^{-k}\big)^{1/k}$.
	For any $\tau \geq 0$, applying Lemma~\ref{lem: anti conc} and the union bound yields
	\begin{equation*}
		\Prob{d(Ux, U\cY) \leq \tau} \leq \sum_{y \in \cY} \Prob{\norm{U(x-y)} \leq \tau} \leq \sum_{y \in \cY} \brac{\frac{C_0 \tau \sqrt{d/k}}{\norm{x-y}}}^k\,.
	\end{equation*}
	Choosing $\tau = \brac{2^{1/k}\, C_0 \sqrt{d/k}\, S_k(x)}^{-1}$ makes this probability at most $\frac 12$, so 
	\begin{equation}\label{eq:kdim_pointwise}
		\Esub{U \sim \sigma_{d,k}}{d(Ux, U\cY)^p} \geq \tau^p\, \Prob{d(Ux, U\cY) > \tau} \geq \frac{\tau^p}{2} = \frac 12 \brac{2^{1/k}\, C_0 \sqrt{\frac dk}\, S_k(x)}^{-p}\,.
	\end{equation}

	By the definition~\eqref{eq:kk_def} of $K^{(k)}$, for $\mu$-almost every $x \notin \cY$, we have $S_k(x)^{-1} \geq {d(x, \cY)/K}$, so \eqref{eq:kdim_pointwise} yields, for $\mu$-almost every $x$,
	\begin{equation*}
		\Esub{U \sim \sigma_{d,k}}{d(Ux, U\cY)^p} \geq \frac 12 \brac{2^{1/k}\, C_0 \sqrt{\frac dk}}^{-p} K^{-p}\, d(x, \cY)^p
	\end{equation*}
	(for $x \in \cY$ this holds trivially, since the right-hand side vanishes).
	Integrating against $\mu$ and applying Lemma~\ref{lem:voronoi_reduction},
	\begin{equation*}
		\SW_{p,k}^p(\mu, \nu) \geq \frac 12 \brac{2^{1/k}\, C_0 \sqrt{\frac dk}\, K}^{-p} W_p^p(\mu, \nu)\,.
	\end{equation*}
	Raising both sides to the power $1/p$ and using $2^{1/p} \leq 2$ and $2^{1/k} \leq 2$ gives
	\begin{equation*}
		W_p(\mu, \nu) \leq 4 C_0 \sqrt{\frac dk}\, K\, \SW_{p,k}(\mu, \nu)\,,
	\end{equation*}
	which proves the theorem with $C = 4C_0$.
\end{proof}

It remains to prove Lemma~\ref{lem: anti conc}.

\begin{proof}[Proof of Lemma~\ref{lem: anti conc}]
    It suffices to check the case $v = e_1$. By rotational invariance of the measure $\sigma_{d,k}$, $\norm{Ue_1}^2 = \sum_{i=1}^k \theta_{i}^2$ in distribution, where $\theta \sim \sigma_{d}$, and it is well known that $\sum_{i=1}^k \theta_i^2$ has a $\text{Beta}(k/2, (d-k)/2)$ distribution \cite{frankl1990some}.
    If $t \geq \frac 12$, the trivial bound $\Prob{\norm{Ue_1} < t\sqrt{k/d}} \leq 1 \leq (2t)^k$ suffices, so assume $0 < t < \frac 12$ and set $\eta := (k/d)t^2 \leq \frac{1}{4}$. Since $d - k \geq 1$, we have $(1-u)^{\frac{d-k}{2}-1} \leq (3/4)^{-1/2} \leq 2$ for every $u \in [0,\eta]$, and therefore
    \begin{align*}
        \Psub{U\sim\sigma_{d,k}}{\norm{Ue_1} < \eta^{1/2}} &\leq \frac{\Gamma(d/2)}{\Gamma(k/2)\Gamma((d-k)/2)}\int_0^\eta u^{\frac{k}{2}-1}(1-u)^{\frac{d-k}{2}-1}\,du \\
        &\leq \frac{4}{k} \cdot \frac{\Gamma(d/2)}{\Gamma(k/2)\Gamma((d-k)/2)}\, \eta^{k/2}\,.
    \end{align*}
    Since the digamma function $\psi = (\log \Gamma)'$ satisfies $\psi(u) \leq \log u$, we have $\log \Gamma(\frac d2) - \log\Gamma(\frac{d-k}2) = \int_{(d-k)/2}^{d/2}\psi(u)\dd u \leq \frac k2 \log \frac d2$, i.e., $\Gamma(d/2) \leq (d/2)^{k/2}\, \Gamma((d-k)/2)$. Hence, using $\eta^{k/2} = (k/d)^{k/2} t^k$,
    \begin{equation*}
        \Psub{U\sim\sigma_{d,k}}{\norm{Ue_1} < t\sqrt{k/d}} \leq \frac{4}{k}\cdot \frac{(k/2)^{k/2}}{\Gamma(k/2)}\, t^k \leq \frac{4}{k}\cdot \frac{(k/2)^{1/2}e^{k/2}}{\sqrt{2\pi}}\, t^k \leq (C_0 t)^{k}
    \end{equation*}
    for a universal constant $C_0$, where we used the Stirling bound $\Gamma(z) \geq \sqrt{2\pi}\, z^{z - 1/2} e^{-z}$ with $z = k/2$.
\end{proof}

\section{Lower bound: proof of Theorem~\ref{thm:discrete_lb}}\label{sec:lb}

In this section we show that the linear dependence on the complexity parameter $K_{\mu,\nu}$ in Theorem~\ref{thm:discrete_bound} cannot be improved, up to polylogarithmic factors. The example is due to Chen and Travaglini \cite{chen20101}.

Fix an integer $M > 1$ and let
\begin{equation}\label{eq:lattice_def}
	h := \frac{2}{2M+1}, \qquad N := (2M+1)^d, \qquad \Lambda_M := \sbrac{hj : j \in \sbrac{-M, \dots, M}^d} \subseteq Q := [-1,1]^d\,.
\end{equation}
The set $\Lambda_M$ consists of the centers of the $N$ subcubes of side length $h$ tiling $Q$. Let $\mu := \Unif(Q)$ be the normalized Lebesgue measure on $Q$ and let $\nu := \frac 1N \sum_{y \in \Lambda_M} \delta_y$.

For a unit vector $\theta \in \mathbb{S}^{d-1}$ and $t \in \R$, write $P_{\theta, t} := Q \cap \sbrac{x \in \R^d : \abrac{x, \theta} \leq t}$ for the intersection of $Q$ with a half-space. Chen and Travaglini prove \cite[Theorems 1 and 2]{chen20101} that the lattice $\Lambda_M$ satisfies
\begin{equation}\label{eq:chen_travaglini}
	\sup_{t \geq 0} \int_{\mathbb{S}^{d-1}} \abs{\operatorname{card}(\Lambda_M \cap P_{\theta,t}) - N 2^{-d} |P_{\theta,t}|}\, \dd\sigma_d(\theta) \leq c_d (\log N)^d\,,
\end{equation}
where $|P_{\theta,t}|$ denotes the Lebesgue measure of $P_{\theta, t}$.
By the symmetry, the bound~\eqref{eq:chen_travaglini} holds for all $t \in \R$.

The relevant properties of this pair $(\mu, \nu)$ are collected in the following lemma.

\begin{lem}\label{lem:lattice_props}
	Let $d \geq 2$, and let $\mu, \nu$ be as above. Then:
	\begin{enumerate}[(i)]
		\item Condition~\ref{cond:voronoi} holds for the pair $(\mu, \nu)$;
		\item $W_1(\mu, \nu) \geq \frac{h}{8}$;
		\item $K_{\mu, \nu} \asymp_d M^{d-1}$;
		\item $\SW_{1,1}(\mu, \nu) \leq C_d \frac{(\log N)^d}{N}$.
	\end{enumerate}
\end{lem}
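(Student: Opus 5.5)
I will prove the four items in order; all but (iv) are elementary geometry of the lattice.

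\emph{Items (i) and (ii).} For (i), the Voronoi cell of $y \in \Lambda_M$, intersected with $Q$, is exactly the subcube $y + [-h/2,h/2]^d$. Indeed, the points of $\Lambda_M$ are the centers of a cubic tiling of $Q$, so for $x$ in that subcube the nearest lattice point is $y$. Each subcube has $\mu$-mass $h^d/2^d = 1/N$. Hence the coupling that sends each subcube to its center is a valid coupling of $\mu$ and $\nu$ and satisfies~\eqref{eq:voronoi}. For (ii), Lemma~\ref{lem:voronoi_reduction} gives $W_1(\mu,\nu)=\int d(x,\Lambda_M)\,d\mu$. This equals the mean distance from a uniform point of $[-h/2,h/2]^d$ to the origin, which is at least the mean of $|x_1|$, namely $h/4 \geq h/8$.

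\emph{Item (iii).} Fix $x \in Q\setminus \Lambda_M$, and let $y_0$ be its cell center, so that $d(x,\Lambda_M)=\|x-y_0\|\le \sqrt d\,h/2$. I split $\sum_y \|x-y\|^{-1}$ into two parts.
\begin{itemize}
\item The term $y_0$ contributes $1/d(x,\Lambda_M)$.
\item Every other lattice point $y=y_0+hj$ with $j\neq0$ satisfies $\|x-y\|\ge h\|j\|/2$ up to constants (use $\|j\|_\infty\ge1$). The number of $j$ with $\|j\|\asymp 2^\ell$ is $\asymp 2^{\ell d}$, so these terms sum to $\lesssim_d h^{-1}\sum_{\ell\lesssim\log M}2^{\ell(d-1)} \asymp_d M^{d-1}/h$.
\end{itemize}
Multiplying by $d(x,\Lambda_M)\lesssim h$ gives $K\lesssim_d 1+M^{d-1}\asymp M^{d-1}$.

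For the matching lower bound, take $x$ at a corner of a central subcube, e.g.\ with $d(x,\Lambda_M)\ge h/4$; such points form a set of positive $\mu$-measure. At least a constant fraction of the $\asymp M^d$ lattice points lie at distance $\lesssim 1$ from $x$. This gives $\sum_y\|x-y\|^{-1}\gtrsim_d M^d$, hence $K\gtrsim_d hM^d\asymp M^{d-1}$. (One may use the ess sup, so a positive-measure set of $x$ suffices.)

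\emph{Item (iv).} This is the main step. In one dimension, $W_1(\theta_\#\mu,\theta_\#\nu)=\int_\R|F_\mu^\theta(t)-F_\nu^\theta(t)|\,dt$. Here $F_\nu^\theta(t)=\operatorname{card}(\Lambda_M\cap P_{\theta,t})/N$ and $F_\mu^\theta(t)=2^{-d}|P_{\theta,t}|$. Since both measures live in $Q$, the difference vanishes for $|t|>\sqrt d$. Then by Fubini,
\[
\SW_{1,1}(\mu,\nu)=\int_{-\sqrt d}^{\sqrt d}\int_{\mathbb S^{d-1}}|F^\theta_\nu(t)-F^\theta_\mu(t)|\,d\sigma_d(\theta)\,dt \le 2\sqrt d\,\sup_t \frac{1}{N}\int_{\mathbb S^{d-1}}\bigl|\operatorname{card}(\Lambda_M\cap P_{\theta,t})-N2^{-d}|P_{\theta,t}|\bigr|\,d\sigma_d(\theta).
\]
By~\eqref{eq:chen_travaglini} (valid for all $t$ by symmetry), the right-hand side is at most $2\sqrt d\,c_d(\log N)^d/N$.

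The only delicate points are the strict versus non-strict inequality in the CDF, which is irrelevant for a.e.\ $t$, and measurability in $(\theta,t)$. I expect no real obstacle, since the heavy lifting is the cited Chen--Travaglini bound. The main care is the counting in (iii), specifically ensuring that the lower bound holds on a positive-measure set of $x$.
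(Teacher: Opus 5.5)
Your proposal is correct and follows the paper's proof in structure. You use the Voronoi-cell coupling for (i), Lemma~\ref{lem:voronoi_reduction} for (ii), a shell/dyadic decomposition for the upper bound in (iii), and the CDF representation with Fubini and~\eqref{eq:chen_travaglini} for (iv). Your local variants are slightly simpler than the paper's: for (ii) you use $\|x-y_0\|\ge |x_1-(y_0)_1|$ to get $W_1\ge h/4$ directly, instead of the small-ball volume estimate; for the lower bound in (iii) you use that all $N$ lattice points lie within $\operatorname{diam}(Q)=2\sqrt d$ of $x$, which avoids the paper's restriction to $\cY_0$ and the separate case $M\le 8$.
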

\begin{proof}
	(i) For $y \in \Lambda_M$, the Voronoi cell of $y$ relative to $\Lambda_M$ intersected with $Q$ is the subcube $Q_y := y + [-\frac h2, \frac h2]^d$.
	The subcubes $\sbrac{Q_y}_{y \in \Lambda_M}$ tile $Q$ and each satisfies $\mu(Q_y) = \frac{h^d}{2^d} = \frac 1N = \nu(\sbrac y)$. The map $T$ sending $x$ to the nearest point of $\Lambda_M$ therefore pushes $\mu$ forward to $\nu$, and the coupling $\gamma := (\mathrm{id}, T)_\#\mu$ satisfies $\norm{x - T(x)} = d(x, \Lambda_M)$ for $\mu$-a.e.\ $x$, which is Condition~\ref{cond:voronoi}.

	(ii) By (i) and Lemma~\ref{lem:voronoi_reduction}, $W_1(\mu, \nu) = \int_Q d(x, \Lambda_M) \dd\mu(x)$. Since
	$$
		\mu\brac{x : d(x, \Lambda_M) < \tfrac h4} \leq N \cdot \frac{\omega_d (h/4)^d}{2^d} = \frac{\omega_d}{4^d} \leq \frac 12\,,
	$$
	where $\omega_d = \pi^{d/2}/\Gamma(\frac d2 + 1)$ is the volume of the unit ball, we get $W_1(\mu, \nu) \geq \frac h4 \cdot \frac 12 = \frac h8$.

	(iii) For the upper bound, fix $x \in Q$ and split the sum defining $K_{\mu,\nu}$ according to the distance from $x$. Since $d(x, \Lambda_M) \leq \frac{\sqrt d\, h}{2}$, and each term satisfies $d(x, \Lambda_M)/\norm{x - y} \leq 1$,
	\begin{align*}
		d(x, \Lambda_M) \sum_{y \in \Lambda_M} \frac{1}{\norm{x-y}} &\leq \operatorname{card}\sbrac{y : \norm{x - y} \leq 2\sqrt d h}\\
		&\qquad+ \frac{\sqrt d h}{2}\sum_{\substack{y \in \Lambda_M \\ \norm{x-y} > 2\sqrt d h}} \frac{1}{\norm{x - y}}\,.
	\end{align*}
	The first term is at most $C_d$. For the second, grouping the lattice points into shells $\sbrac{y : mh \leq \norm{x - y} < (m+1)h}$, each containing at most $C_d m^{d-1}$ points, with $m$ ranging up to $O(\sqrt d/h) = O_d(M)$,
	$$
		\sum_{\substack{y \in \Lambda_M \\ \norm{x-y} > 2\sqrt d h}} \frac{1}{\norm{x-y}} \leq \frac{C_d}{h}\sum_{m = 1}^{O_d(M)} m^{d-2} \leq \frac{C_d'}{h}\, M^{d-1}\,,
	$$
	so that $d(x, \Lambda_M) \sum_y \norm{x-y}^{-1} \leq C_d M^{d-1}$, and hence $K_{\mu,\nu} \lesssim_d M^{d-1}$.
	For the lower bound, note first that $K_{\mu,\nu} \geq 1$, as observed after \eqref{eq:k_def}; since $M^{d-1} \lesssim_d 1$ for $M \leq 8$, this proves the claim when $M \leq 8$, so assume $M \geq 8$, i.e., $h \leq \frac 18$.
	Consider the set $A$ of points $x$ with $\norm{x - x_*} \leq \frac h8$, where $x_* := \frac h2 e_1$; then $\mu(A) > 0$ and every $x \in A$ satisfies $d(x, \Lambda_M) \geq \frac h2 - \frac h8 = \frac{3h}8$.
	Let $\cY_0 := \Lambda_M \cap [\tfrac 14, \tfrac 12]^d$. Since $h \leq \frac 18$, each coordinate interval $[\tfrac 14, \tfrac 12]$ contains at least $\frac{1}{4h} - 1 \geq \frac{1}{8h}$ points of $h\Z$, all of the form $hj$ with $0 \leq j \leq M$, so $\operatorname{card}(\cY_0) \geq (8h)^{-d}$.
	Moreover, for $x \in A$ and $y \in \cY_0$, we have $\norm{x - y} \leq \norm{x} + \norm{y} \leq h + \frac{\sqrt d}{2} \leq \sqrt d$. Therefore, for every $x \in A$,
	$$
		d(x, \Lambda_M) \sum_{y \in \Lambda_M} \frac{1}{\norm{x - y}} \geq \frac{3h}{8} \cdot \frac{\operatorname{card}(\cY_0)}{\sqrt d} \geq \frac{3}{8^{d+1}\sqrt{d}}\, h^{1-d} \gtrsim_d M^{d-1}\,,
	$$
	and hence $K_{\mu, \nu} \gtrsim_d M^{d-1}$.

	(iv) For one-dimensional probability measures, $W_1$ coincides with the $L^1$ distance between cumulative distribution functions \cite{bobkov2019one}. For fixed $\theta$, the distribution functions of $\theta_\#\mu$ and $\theta_\#\nu$ at $t \in \R$ are $2^{-d}|P_{\theta,t}|$ and $N^{-1}\operatorname{card}(\Lambda_M \cap P_{\theta,t})$ respectively, and both distributions are supported in $[-\sqrt d, \sqrt d]$. Therefore, by Fubini--Tonelli and \eqref{eq:chen_travaglini} (extended to all $t \in \R$ as noted above),
	\begin{align*}
		\SW_{1,1}(\mu,\nu) &= \int_{\mathbb{S}^{d-1}} \int_{-\sqrt d}^{\sqrt d} \abs{\frac{\operatorname{card}(\Lambda_M \cap P_{\theta,t})}{N} - \frac{|P_{\theta,t}|}{2^d}}\dd t \dd \sigma_d(\theta) \\
		&\leq \frac{2\sqrt d}{N} \sup_{t \in \R}\int_{\mathbb{S}^{d-1}} \abs{\operatorname{card}(\Lambda_M \cap P_{\theta,t}) - N2^{-d}|P_{\theta,t}|}\dd\sigma_d(\theta) \leq C_d \frac{(\log N)^d}{N}\,. \qedhere
	\end{align*}
\end{proof}

\begin{proof}[Proof of Theorem~\ref{thm:discrete_lb}]
	Given $K \geq 1$, set $M := \max\brac{\ceil{K^{1/(d-1)}}, 2}$ and let $(\mu, \nu)$ be the pair constructed above. By Lemma~\ref{lem:lattice_props}(iii), $K_{\mu, \nu} \asymp_d M^{d-1} \asymp_d K$. Combining parts (ii) and (iv) of Lemma~\ref{lem:lattice_props}, and using $Nh = 2(2M+1)^{d-1} \asymp M^{d-1}$ and $\log N = d \log(2M+1) \asymp_d \log(K+1)$,
	\begin{equation}
		\frac{W_1(\mu,\nu)}{\SW_{1,1}(\mu,\nu)} \geq \frac{h/8}{C_d (\log N)^d/N} = \frac{Nh}{8C_d(\log N)^d} \geq c_d \frac{K}{(\log (K+1))^{d}}\,,
	\end{equation}
	which is the claimed bound.
\end{proof}

\section*{AI Disclosure}
Claude Fable 5 and ChatGPT 5.6 were used in editing the manuscript, checking for errors, and filling in routine technical details; in particular, these models proved useful in correcting an error in a previous version of Theorem~\ref{thm:discrete_bound}. The authors developed the arguments and wrote the manuscript.

\section*{Acknowledgments}
The authors are grateful to Yanjun Han, N\'estor Guill\'en, Jaume de Dios Pont, Shay Sadovsky, and Uriel Mart\'inez Le\'on for helpful discussions.
J.N.W. was supported in part by NSF grant DMS-2339829.
J.S. was supported by the NYU MacCracken Fellowship.

\printbibliography
\end{document}

%% file: shortcuts.tex
    \DeclarePairedDelimiter\ceil{\lceil}{
\rceil}
    \DeclarePairedDelimiter\floor{\lfloor}{\rfloor}

    \newtheorem{thm}{Theorem}[section]

    \newtheorem{lem}[thm]{Lemma}

    \theoremstyle{remark}
    
    \theoremstyle{definition}
    \newtheorem{definition}[thm]{Definition}
    
    \numberwithin{equation}{section}

    \newcommand{\bean}{\begin{eqnarray}}
    \newcommand{\eean}{\end{eqnarray}}
    \newcommand{\be}{\begin{displaymath}}
    \newcommand{\ee}{\end{displaymath}}
    \newcommand{\bea}{\begin{eqnarray*}}   
    \newcommand{\eea}{\end{eqnarray*}}

    \newcommand{\nc}{\newcommand}
    
    \makeatletter
    \newcommand{\oset}[3][0ex]{%
      \mathrel{\mathop{#3}\limits^{
        \vbox to#1{\kern-2\ex@
        \hbox{$\scriptstyle#2$}\vss}}}}
    \makeatother
    \newcommand{\brac}[1]{\left( #1 \right)}                                
    \newcommand{\sbrac}[1]{\left\{ #1 \right\}}                             
    \newcommand{\abrac}[1]{\left\langle #1\right\rangle}                    
    
    \newcommand{\Unif}{\operatorname{Unif}}
    
    \newcommand{\E}[1]{\mathbb{E}\left[#1\right]} 							
    
    \newcommand{\Prob}[1]{\mathbb{P}\left(#1\right)}						
    \newcommand{\Psub}[2]{{\mathbb P}_{#1}\left(#2\right)}		      	

    \newcommand{\Esub}[2]{{\mathbb E_{#1}}\left[#2\right]}
    
    \newcommand{\iid}{\text{i.i.d.}}
    
    \DeclareDocumentCommand{\Pwto} {o} {
    	\IfNoValueTF {#1}
    	{\overset{n\to\infty}{\longrightarrow}}
    	{ \xrightarrow[ #1 \to \infty]{\Pr }}
    }
    \DeclareDocumentCommand{\Prto} {o} {
    	\IfNoValueTF {#1}
    	{\overset{\Pr}{\longrightarrow}}
    	{ \xrightarrow[ #1 \to \infty]{\Pr }}
    }
    \DeclareDocumentCommand{\Asto} {o} {
    	\IfNoValueTF {#1}
    	{\overset{\operatorname{a.s.}}{\longrightarrow}}
    	{
    		\xrightarrow[ #1 \to \infty]{\operatorname{a.s.} }
    	}
    }
    \DeclareDocumentCommand{\Mgfto} {o} {
    	\IfNoValueTF {#1}
    	{\overset{\operatorname{mgf}}{\longrightarrow}}
    	{ \xrightarrow[ #1 \to \infty]{\operatorname{mgf} }}
    }
    
    \DeclareDocumentCommand{\Wkto} {o} {
    	\IfNoValueTF {#1}
    	{\overset{(d)}{\longrightarrow}}
    	{ \xrightarrow[ #1 \to \infty]{(d) }}
    }
    
    \DeclareDocumentCommand \LPto { O{1} }
    {\overset{\operatorname{L^{#1}}}{\longrightarrow}}
    \newcommand\restr[2]{{
      \left.\kern-\nulldelimiterspace 
      #1 
      \vphantom{\big|} 
      \right|_{#2} 
      }}
    
    \nc{\on}{\operatorname}
    \nc{\ch}{\mbox{ch}}
    \nc{\Z}{{\mathbb Z}}
    \nc{\C}{{\mathbb C}}
    \nc{\prob}{\mathbb{P}}
    
    \nc{\indic}{\mathbbm{1}}
    \nc{\indistr}{\overset{d}{\longrightarrow}}
    \nc{\inprob}{\overset{p}{\longrightarrow}}
    \nc{\N}{{\mathbb N}}
    \nc{\pone}{{\mathbb C}{\mathbb P}^1}
    \nc{\pa}{\partial}
    \nc{\F}{{\mathcal F}}
    \nc{\arr}{\rightarrow}
    \nc{\larr}{\longrightarrow}
    \nc{\al}{\alpha}
    \nc{\ri}{\rangle}
    \nc{\lef}{\langle}
    \nc{\W}{{\mathbb W}}
    \nc{\la}{\lambda}
    \nc{\lapl}{\mathcal{L}}
    \nc{\ep}{\lambda}
    \nc{\su}{\widehat{{\mathfrak s}{\mathfrak l}}_2}
    \nc{\sw}{{\mathfrak s}{\mathfrak l}}
    \nc{\g}{{\mathfrak g}}
    \nc{\h}{{\mathfrak h}}
    \nc{\n}{{\mathfrak n}}
    \nc{\G}{\widehat{\g}}
    \nc{\De}{\Delta}
    \nc{\gt}{\widetilde{\g}}
    \nc{\Ga}{\Gamma}
    \nc{\one}{{\mathbf 1}}
    \nc{\z}{{\mathfrak Z}}
    \nc{\La}{\Lambda}
    \nc{\wt}{\widetilde}
    \nc{\wh}{\widehat}
    \nc{\cri}{_{\kappa_c}}
    \nc{\si}{\sigma}
    \nc{\el}{\ell}
    \nc{\bi}{\bibitem}
    \nc{\om}{\omega}
    \nc{\ol}{\overline}
    \nc{\dzz}{\frac{dz}{z}}
    \nc{\mc}{\mathcal}
    \nc{\Cal}{\mathcal}
    \nc{\bb}{{\mathfrak b}}
    \nc{\ot}{\otimes}
    \nc{\R}{{\mathbb R}}
    \nc{\yy}{{\mc Y}}
    \nc{\ga}{\gamma}
    \nc{\us}{\underset}
    \nc{\opl}{\oplus}
    \nc{\Fq}{{\mathbb F}_q}
    \nc{\Mq}{{\mathcal M}}
    \nc{\Rep}{\on{Rep}}
    \nc{\sssec}{\subsubsection}
    \nc{\ssec}{\subsection}
    \nc{\lan}{\langle}
    \nc{\ran}{\rangle}

    \newcommand\cD{\mathcal D}
    \newcommand\cE{\mathcal E}

    \newcommand\cH{\mathcal H}

    \newcommand\cP{\mathcal P}

    \newcommand\cU{{\mathcal U}}

    \newcommand\cY{{\mathcal Y}}
    \newcommand\cZ{{\mathcal Z}}
    
    \nc{\D}{\mathcal D}
    \nc{\Vect}{\on{Vect}}
    \nc{\ghat}{\G}
    \nc{\T}{\mc T}
    \nc{\Tloc}{\T^\g_{\on{loc}}}
    \nc{\vac}{|0\ran}
    \nc{\Wick}{{\mb :}}
    \nc{\mb}{\mathbf}
    \nc{\delz}{\partial_z}
    \nc{\K}{{\cali K}}
    \nc{\cali}{\mathcal}
    \nc{\li}{\mathfrak l}
    \nc{\lt}{\widetilde{\li}}
    \nc{\astar}{a^*}
    \nc{\ka}{\kappa}
    
    \nc{\OO}{{\mc O}}
    \nc{\AutO}{\on{Aut}\OO}
    \nc{\DerO}{\on{Der}\OO}
    \nc{\DerpO}{\on{Der}_+\OO}
    \nc{\Au}{{\mc A}ut}
    \nc{\mf}{\mathfrak}
    \nc{\V}{{\mc V}}
    \nc{\hh}{\wh{\h}}
    
    \nc{\pp}{{\mathfrak p}}
    \nc{\mm}{{\mathfrak m}}
    \nc{\rr}{{\mathfrak r}}
    \nc{\gr}{\on{gr}}
    \nc{\Spe}{\on{Spec}}
    \nc{\rv}{\rho^\vee}
    \nc{\can}{\on{can}}
    \nc{\CC}{\on{Op}_G(D))}
    \nc{\Op}{\on{Op}_G(D)}
    \nc{\MOp}{\on{MOp}_G(D)}
    \nc{\Db}{{\mathbb D}}
    \nc{\ww}{w}
    
    \nc{\oQl}{\ol{{\mathbb Q}}_\ell}
    \nc{\oFq}{\ol{{\mathbb F}}_q}
    \nc{\Q}{{\mathbb Q}}
    \nc{\Ql}{{\mathbb Q}_\ell}
    \nc{\bs}{\backslash}
    \nc{\AD}{{\mathbb A}}
    \nc{\M}{{\mc M}}
    \nc{\Bun}{\on{Bun}}
    \nc{\hl}{h^{\leftarrow}}
    \nc{\hr}{h^{\rightarrow}}
    \nc{\supp}{\on{supp}}
    \nc{\He}{\on{H}}
    \nc{\Aut}{\on{Aut}}
    \nc{\Ll}{{\mc L}}
    \nc{\Coh}{{{\mathcal C}oh}}
    \nc{\ovc}{\overset{\circ}}
    \nc{\Hav}{\on{H}}
    \nc{\Mod}{\on{Mod}}
    \nc{\kk}{{\mathfrak k}}
    \nc{\vf}{\varphi} 
    \nc{\Gr}{\on{Gr}}
    \nc{\gen}{\on{gen}}
    \nc{\IC}{\on{IC}}
    \nc{\Jac}{\on{Jac}}

%% file: figures/sliced_diagram.tex
\begin{tikzpicture}
 
\begin{scope}[shift={(0,0)}]
  \draw[gray!70] (-2.8,0) -- (2.8,0);
  \draw[gray!70] (0,-2.8) -- (0,2.8);
 
  \foreach \a in {45,90,135,180}{
    \filldraw[rotate=\a, fill=gray!35, fill opacity=0.5,
              draw=gray!70, very thin]
      (0,0) ellipse (2 and 0.22);
  }
 
  \draw[very thin, gray!75] (0,0) -- (45:2);
  \draw[very thin, gray!75] (0,0) -- (135:0.22);
  \node[inner sep=1pt] at ($(45:1)+(-45:0.34)$) {\scriptsize $1$};
  \node[inner sep=1pt] at ($(135:0.11)+(225:0.24)$) {\scriptsize $\rho$};
 
  \fill[gray!70] (0,0) circle (1.2pt);
 
  \draw[->, very thin] (0,0) -- (55:2)
      node[above right, inner sep=1pt] {$\theta$};
 
  \node[above] at (0,2.8) {$\mu$};
\end{scope}
 
\begin{scope}[shift={(8.2,0)}]
  \draw[gray!70] (-2.8,0) -- (2.8,0);
  \draw[gray!70] (0,-2.8) -- (0,2.8);
 
  \foreach \a in {45,90,135,180}{
    \filldraw[rotate=\a, fill=gray!35, fill opacity=0.5,
              draw=gray!70, very thin]
      (0,0.3) ellipse (2 and 0.22);
    \filldraw[rotate=\a, fill=gray!35, fill opacity=0.5,
              draw=gray!70, very thin]
      (0,-0.3) ellipse (2 and 0.22);
  }
 
  \draw[very thin, gray!75] (135:0.3) -- ($(135:0.3)+(45:2)$);
  \draw[very thin, gray!75] (135:0.3) -- ($(135:0.3)+(135:0.22)$);
 
  \foreach \a in {45,90,135,180}{
    \fill[gray!70, rotate=\a] (0,0.3)  circle (1.2pt);
    \fill[gray!70, rotate=\a] (0,-0.3) circle (1.2pt);
  }
 
  \draw[->, very thin] (0,0) -- (55:2)
      node[above right, inner sep=1pt] {$\theta$};
 
  \node[above] at (0,2.8) {$\nu$};
\end{scope}
 
\begin{scope}[shift={(4.1,-6.4)}, scale=1.2]
  \draw[gray!70] (-3.5,0) -- (3.5,0);
  \draw[gray!70] (0,-0.15) -- (0,2.2);
 
  \fill[gray!35, fill opacity=0.5]
    (-3.400,0) -- plot coordinates {(-3.400,0.043) (-3.389,0.043) (-3.379,0.044) (-3.368,0.044) (-3.357,0.045) (-3.347,0.045) (-3.336,0.046) (-3.326,0.046) (-3.315,0.047) (-3.304,0.047) (-3.294,0.048) (-3.283,0.048) (-3.272,0.049) (-3.262,0.049) (-3.251,0.050) (-3.241,0.050) (-3.230,0.051) (-3.219,0.052) (-3.209,0.052) (-3.198,0.053) (-3.188,0.053) (-3.177,0.054) (-3.166,0.055) (-3.156,0.055) (-3.145,0.056) (-3.134,0.056) (-3.124,0.057) (-3.113,0.058) (-3.103,0.058) (-3.092,0.059) (-3.081,0.060) (-3.071,0.060) (-3.060,0.061) (-3.049,0.061) (-3.039,0.062) (-3.028,0.063) (-3.018,0.063) (-3.007,0.064) (-2.996,0.065) (-2.986,0.066) (-2.975,0.066) (-2.964,0.067) (-2.954,0.068) (-2.943,0.068) (-2.933,0.069) (-2.922,0.070) (-2.911,0.070) (-2.901,0.071) (-2.890,0.072) (-2.879,0.073) (-2.869,0.073) (-2.858,0.074) (-2.848,0.075) (-2.837,0.076) (-2.826,0.077) (-2.816,0.077) (-2.805,0.078) (-2.794,0.079) (-2.784,0.080) (-2.773,0.081) (-2.763,0.081) (-2.752,0.082) (-2.741,0.083) (-2.731,0.084) (-2.720,0.085) (-2.709,0.085) (-2.699,0.086) (-2.688,0.087) (-2.678,0.088) (-2.667,0.089) (-2.656,0.090) (-2.646,0.091) (-2.635,0.091) (-2.624,0.092) (-2.614,0.093) (-2.603,0.094) (-2.593,0.095) (-2.582,0.096) (-2.571,0.097) (-2.561,0.098) (-2.550,0.099) (-2.539,0.100) (-2.529,0.101) (-2.518,0.102) (-2.508,0.102) (-2.497,0.103) (-2.486,0.104) (-2.476,0.105) (-2.465,0.106) (-2.454,0.107) (-2.444,0.108) (-2.433,0.109) (-2.423,0.110) (-2.412,0.111) (-2.401,0.112) (-2.391,0.113) (-2.380,0.114) (-2.369,0.115) (-2.359,0.116) (-2.348,0.117) (-2.338,0.118) (-2.327,0.119) (-2.316,0.121) (-2.306,0.122) (-2.295,0.123) (-2.284,0.124) (-2.274,0.125) (-2.263,0.126) (-2.252,0.127) (-2.242,0.128) (-2.231,0.129) (-2.221,0.130) (-2.210,0.131) (-2.199,0.132) (-2.189,0.133) (-2.178,0.135) (-2.167,0.136) (-2.157,0.137) (-2.146,0.138) (-2.136,0.139) (-2.125,0.140) (-2.114,0.141) (-2.104,0.142) (-2.093,0.144) (-2.083,0.145) (-2.072,0.146) (-2.061,0.147) (-2.051,0.148) (-2.040,0.149) (-2.029,0.151) (-2.019,0.152) (-2.008,0.153) (-1.998,0.154) (-1.987,0.155) (-1.976,0.156) (-1.966,0.158) (-1.955,0.159) (-1.944,0.160) (-1.934,0.161) (-1.923,0.162) (-1.913,0.164) (-1.902,0.165) (-1.891,0.166) (-1.881,0.167) (-1.870,0.168) (-1.859,0.170) (-1.849,0.171) (-1.838,0.172) (-1.828,0.173) (-1.817,0.175) (-1.806,0.176) (-1.796,0.177) (-1.785,0.178) (-1.774,0.180) (-1.764,0.181) (-1.753,0.182) (-1.743,0.183) (-1.732,0.184) (-1.721,0.186) (-1.711,0.187) (-1.700,0.188) (-1.689,0.189) (-1.679,0.191) (-1.668,0.192) (-1.657,0.193) (-1.647,0.194) (-1.636,0.196) (-1.626,0.197) (-1.615,0.198) (-1.604,0.199) (-1.594,0.201) (-1.583,0.202) (-1.573,0.203) (-1.562,0.204) (-1.551,0.206) (-1.541,0.207) (-1.530,0.208) (-1.519,0.209) (-1.509,0.211) (-1.498,0.212) (-1.488,0.213) (-1.477,0.214) (-1.466,0.216) (-1.456,0.217) (-1.445,0.218) (-1.434,0.219) (-1.424,0.221) (-1.413,0.222) (-1.403,0.223) (-1.392,0.224) (-1.381,0.226) (-1.371,0.227) (-1.360,0.228) (-1.349,0.229) (-1.339,0.231) (-1.328,0.232) (-1.318,0.233) (-1.307,0.234) (-1.296,0.235) (-1.286,0.237) (-1.275,0.238) (-1.264,0.239) (-1.254,0.240) (-1.243,0.241) (-1.233,0.243) (-1.222,0.244) (-1.211,0.245) (-1.201,0.246) (-1.190,0.247) (-1.179,0.248) (-1.169,0.250) (-1.158,0.251) (-1.147,0.252) (-1.137,0.253) (-1.126,0.254) (-1.116,0.255) (-1.105,0.257) (-1.094,0.258) (-1.084,0.259) (-1.073,0.260) (-1.062,0.261) (-1.052,0.262) (-1.041,0.263) (-1.031,0.264) (-1.020,0.265) (-1.009,0.266) (-0.999,0.267) (-0.988,0.269) (-0.977,0.270) (-0.967,0.271) (-0.956,0.272) (-0.946,0.273) (-0.935,0.274) (-0.924,0.275) (-0.914,0.276) (-0.903,0.277) (-0.893,0.278) (-0.882,0.279) (-0.871,0.280) (-0.861,0.281) (-0.850,0.282) (-0.839,0.283) (-0.829,0.284) (-0.818,0.285) (-0.808,0.285) (-0.797,0.286) (-0.786,0.287) (-0.776,0.288) (-0.765,0.289) (-0.754,0.290) (-0.744,0.291) (-0.733,0.292) (-0.723,0.293) (-0.712,0.293) (-0.701,0.294) (-0.691,0.295) (-0.680,0.296) (-0.669,0.297) (-0.659,0.298) (-0.648,0.298) (-0.638,0.299) (-0.627,0.300) (-0.616,0.301) (-0.606,0.302) (-0.595,0.303) (-0.584,0.305) (-0.574,0.306) (-0.563,0.308) (-0.552,0.310) (-0.542,0.312) (-0.531,0.316) (-0.521,0.319) (-0.510,0.324) (-0.499,0.330) (-0.489,0.338) (-0.478,0.347) (-0.467,0.358) (-0.457,0.371) (-0.446,0.386) (-0.436,0.405) (-0.425,0.427) (-0.414,0.452) (-0.404,0.481) (-0.393,0.514) (-0.383,0.550) (-0.372,0.590) (-0.361,0.633) (-0.351,0.680) (-0.340,0.729) (-0.329,0.779) (-0.319,0.831) (-0.308,0.883) (-0.298,0.933) (-0.287,0.982) (-0.276,1.027) (-0.266,1.067) (-0.255,1.102) (-0.244,1.130) (-0.234,1.151) (-0.223,1.164) (-0.213,1.168) (-0.202,1.165) (-0.191,1.152) (-0.181,1.132) (-0.170,1.105) (-0.159,1.072) (-0.149,1.032) (-0.138,0.989) (-0.128,0.943) (-0.117,0.895) (-0.106,0.846) (-0.096,0.798) (-0.085,0.752) (-0.074,0.709) (-0.064,0.670) (-0.053,0.635) (-0.043,0.606) (-0.032,0.583) (-0.021,0.566) (-0.011,0.556) (0.000,0.553) (0.011,0.556) (0.021,0.566) (0.032,0.583) (0.043,0.606) (0.053,0.635) (0.064,0.670) (0.074,0.709) (0.085,0.752) (0.096,0.798) (0.106,0.846) (0.117,0.895) (0.128,0.943) (0.138,0.989) (0.149,1.032) (0.159,1.072) (0.170,1.105) (0.181,1.132) (0.191,1.152) (0.202,1.165) (0.212,1.168) (0.223,1.164) (0.234,1.151) (0.244,1.130) (0.255,1.102) (0.266,1.067) (0.276,1.027) (0.287,0.982) (0.297,0.933) (0.308,0.883) (0.319,0.831) (0.329,0.779) (0.340,0.729) (0.351,0.680) (0.361,0.633) (0.372,0.590) (0.382,0.550) (0.393,0.514) (0.404,0.481) (0.414,0.452) (0.425,0.427) (0.436,0.405) (0.446,0.386) (0.457,0.371) (0.467,0.358) (0.478,0.347) (0.489,0.338) (0.499,0.330) (0.510,0.324) (0.521,0.319) (0.531,0.316) (0.542,0.312) (0.552,0.310) (0.563,0.308) (0.574,0.306) (0.584,0.305) (0.595,0.303) (0.606,0.302) (0.616,0.301) (0.627,0.300) (0.638,0.299) (0.648,0.298) (0.659,0.298) (0.669,0.297) (0.680,0.296) (0.691,0.295) (0.701,0.294) (0.712,0.293) (0.722,0.293) (0.733,0.292) (0.744,0.291) (0.754,0.290) (0.765,0.289) (0.776,0.288) (0.786,0.287) (0.797,0.286) (0.807,0.285) (0.818,0.285) (0.829,0.284) (0.839,0.283) (0.850,0.282) (0.861,0.281) (0.871,0.280) (0.882,0.279) (0.892,0.278) (0.903,0.277) (0.914,0.276) (0.924,0.275) (0.935,0.274) (0.946,0.273) (0.956,0.272) (0.967,0.271) (0.977,0.270) (0.988,0.269) (0.999,0.267) (1.009,0.266) (1.020,0.265) (1.031,0.264) (1.041,0.263) (1.052,0.262) (1.062,0.261) (1.073,0.260) (1.084,0.259) (1.094,0.258) (1.105,0.257) (1.116,0.255) (1.126,0.254) (1.137,0.253) (1.147,0.252) (1.158,0.251) (1.169,0.250) (1.179,0.248) (1.190,0.247) (1.201,0.246) (1.211,0.245) (1.222,0.244) (1.233,0.243) (1.243,0.241) (1.254,0.240) (1.264,0.239) (1.275,0.238) (1.286,0.237) (1.296,0.235) (1.307,0.234) (1.318,0.233) (1.328,0.232) (1.339,0.231) (1.349,0.229) (1.360,0.228) (1.371,0.227) (1.381,0.226) (1.392,0.224) (1.403,0.223) (1.413,0.222) (1.424,0.221) (1.434,0.219) (1.445,0.218) (1.456,0.217) (1.466,0.216) (1.477,0.214) (1.488,0.213) (1.498,0.212) (1.509,0.211) (1.519,0.209) (1.530,0.208) (1.541,0.207) (1.551,0.206) (1.562,0.204) (1.572,0.203) (1.583,0.202) (1.594,0.201) (1.604,0.199) (1.615,0.198) (1.626,0.197) (1.636,0.196) (1.647,0.194) (1.657,0.193) (1.668,0.192) (1.679,0.191) (1.689,0.189) (1.700,0.188) (1.711,0.187) (1.721,0.186) (1.732,0.184) (1.742,0.183) (1.753,0.182) (1.764,0.181) (1.774,0.180) (1.785,0.178) (1.796,0.177) (1.806,0.176) (1.817,0.175) (1.827,0.173) (1.838,0.172) (1.849,0.171) (1.859,0.170) (1.870,0.168) (1.881,0.167) (1.891,0.166) (1.902,0.165) (1.912,0.164) (1.923,0.162) (1.934,0.161) (1.944,0.160) (1.955,0.159) (1.966,0.158) (1.976,0.156) (1.987,0.155) (1.997,0.154) (2.008,0.153) (2.019,0.152) (2.029,0.151) (2.040,0.149) (2.051,0.148) (2.061,0.147) (2.072,0.146) (2.083,0.145) (2.093,0.144) (2.104,0.142) (2.114,0.141) (2.125,0.140) (2.136,0.139) (2.146,0.138) (2.157,0.137) (2.167,0.136) (2.178,0.135) (2.189,0.133) (2.199,0.132) (2.210,0.131) (2.221,0.130) (2.231,0.129) (2.242,0.128) (2.252,0.127) (2.263,0.126) (2.274,0.125) (2.284,0.124) (2.295,0.123) (2.306,0.122) (2.316,0.121) (2.327,0.119) (2.338,0.118) (2.348,0.117) (2.359,0.116) (2.369,0.115) (2.380,0.114) (2.391,0.113) (2.401,0.112) (2.412,0.111) (2.423,0.110) (2.433,0.109) (2.444,0.108) (2.454,0.107) (2.465,0.106) (2.476,0.105) (2.486,0.104) (2.497,0.103) (2.508,0.102) (2.518,0.102) (2.529,0.101) (2.539,0.100) (2.550,0.099) (2.561,0.098) (2.571,0.097) (2.582,0.096) (2.592,0.095) (2.603,0.094) (2.614,0.093) (2.624,0.092) (2.635,0.091) (2.646,0.091) (2.656,0.090) (2.667,0.089) (2.677,0.088) (2.688,0.087) (2.699,0.086) (2.709,0.085) (2.720,0.085) (2.731,0.084) (2.741,0.083) (2.752,0.082) (2.762,0.081) (2.773,0.081) (2.784,0.080) (2.794,0.079) (2.805,0.078) (2.816,0.077) (2.826,0.077) (2.837,0.076) (2.847,0.075) (2.858,0.074) (2.869,0.073) (2.879,0.073) (2.890,0.072) (2.901,0.071) (2.911,0.070) (2.922,0.070) (2.932,0.069) (2.943,0.068) (2.954,0.068) (2.964,0.067) (2.975,0.066) (2.986,0.066) (2.996,0.065) (3.007,0.064) (3.018,0.063) (3.028,0.063) (3.039,0.062) (3.049,0.061) (3.060,0.061) (3.071,0.060) (3.081,0.060) (3.092,0.059) (3.103,0.058) (3.113,0.058) (3.124,0.057) (3.134,0.056) (3.145,0.056) (3.156,0.055) (3.166,0.055) (3.177,0.054) (3.188,0.053) (3.198,0.053) (3.209,0.052) (3.219,0.052) (3.230,0.051) (3.241,0.050) (3.251,0.050) (3.262,0.049) (3.272,0.049) (3.283,0.048) (3.294,0.048) (3.304,0.047) (3.315,0.047) (3.326,0.046) (3.336,0.046) (3.347,0.045) (3.357,0.045) (3.368,0.044) (3.379,0.044) (3.389,0.043) (3.400,0.043)} -- (3.400,0) -- cycle;
  \draw[gray!90, thin] plot coordinates {(-3.400,0.043) (-3.389,0.043) (-3.379,0.044) (-3.368,0.044) (-3.357,0.045) (-3.347,0.045) (-3.336,0.046) (-3.326,0.046) (-3.315,0.047) (-3.304,0.047) (-3.294,0.048) (-3.283,0.048) (-3.272,0.049) (-3.262,0.049) (-3.251,0.050) (-3.241,0.050) (-3.230,0.051) (-3.219,0.052) (-3.209,0.052) (-3.198,0.053) (-3.188,0.053) (-3.177,0.054) (-3.166,0.055) (-3.156,0.055) (-3.145,0.056) (-3.134,0.056) (-3.124,0.057) (-3.113,0.058) (-3.103,0.058) (-3.092,0.059) (-3.081,0.060) (-3.071,0.060) (-3.060,0.061) (-3.049,0.061) (-3.039,0.062) (-3.028,0.063) (-3.018,0.063) (-3.007,0.064) (-2.996,0.065) (-2.986,0.066) (-2.975,0.066) (-2.964,0.067) (-2.954,0.068) (-2.943,0.068) (-2.933,0.069) (-2.922,0.070) (-2.911,0.070) (-2.901,0.071) (-2.890,0.072) (-2.879,0.073) (-2.869,0.073) (-2.858,0.074) (-2.848,0.075) (-2.837,0.076) (-2.826,0.077) (-2.816,0.077) (-2.805,0.078) (-2.794,0.079) (-2.784,0.080) (-2.773,0.081) (-2.763,0.081) (-2.752,0.082) (-2.741,0.083) (-2.731,0.084) (-2.720,0.085) (-2.709,0.085) (-2.699,0.086) (-2.688,0.087) (-2.678,0.088) (-2.667,0.089) (-2.656,0.090) (-2.646,0.091) (-2.635,0.091) (-2.624,0.092) (-2.614,0.093) (-2.603,0.094) (-2.593,0.095) (-2.582,0.096) (-2.571,0.097) (-2.561,0.098) (-2.550,0.099) (-2.539,0.100) (-2.529,0.101) (-2.518,0.102) (-2.508,0.102) (-2.497,0.103) (-2.486,0.104) (-2.476,0.105) (-2.465,0.106) (-2.454,0.107) (-2.444,0.108) (-2.433,0.109) (-2.423,0.110) (-2.412,0.111) (-2.401,0.112) (-2.391,0.113) (-2.380,0.114) (-2.369,0.115) (-2.359,0.116) (-2.348,0.117) (-2.338,0.118) (-2.327,0.119) (-2.316,0.121) (-2.306,0.122) (-2.295,0.123) (-2.284,0.124) (-2.274,0.125) (-2.263,0.126) (-2.252,0.127) (-2.242,0.128) (-2.231,0.129) (-2.221,0.130) (-2.210,0.131) (-2.199,0.132) (-2.189,0.133) (-2.178,0.135) (-2.167,0.136) (-2.157,0.137) (-2.146,0.138) (-2.136,0.139) (-2.125,0.140) (-2.114,0.141) (-2.104,0.142) (-2.093,0.144) (-2.083,0.145) (-2.072,0.146) (-2.061,0.147) (-2.051,0.148) (-2.040,0.149) (-2.029,0.151) (-2.019,0.152) (-2.008,0.153) (-1.998,0.154) (-1.987,0.155) (-1.976,0.156) (-1.966,0.158) (-1.955,0.159) (-1.944,0.160) (-1.934,0.161) (-1.923,0.162) (-1.913,0.164) (-1.902,0.165) (-1.891,0.166) (-1.881,0.167) (-1.870,0.168) (-1.859,0.170) (-1.849,0.171) (-1.838,0.172) (-1.828,0.173) (-1.817,0.175) (-1.806,0.176) (-1.796,0.177) (-1.785,0.178) (-1.774,0.180) (-1.764,0.181) (-1.753,0.182) (-1.743,0.183) (-1.732,0.184) (-1.721,0.186) (-1.711,0.187) (-1.700,0.188) (-1.689,0.189) (-1.679,0.191) (-1.668,0.192) (-1.657,0.193) (-1.647,0.194) (-1.636,0.196) (-1.626,0.197) (-1.615,0.198) (-1.604,0.199) (-1.594,0.201) (-1.583,0.202) (-1.573,0.203) (-1.562,0.204) (-1.551,0.206) (-1.541,0.207) (-1.530,0.208) (-1.519,0.209) (-1.509,0.211) (-1.498,0.212) (-1.488,0.213) (-1.477,0.214) (-1.466,0.216) (-1.456,0.217) (-1.445,0.218) (-1.434,0.219) (-1.424,0.221) (-1.413,0.222) (-1.403,0.223) (-1.392,0.224) (-1.381,0.226) (-1.371,0.227) (-1.360,0.228) (-1.349,0.229) (-1.339,0.231) (-1.328,0.232) (-1.318,0.233) (-1.307,0.234) (-1.296,0.235) (-1.286,0.237) (-1.275,0.238) (-1.264,0.239) (-1.254,0.240) (-1.243,0.241) (-1.233,0.243) (-1.222,0.244) (-1.211,0.245) (-1.201,0.246) (-1.190,0.247) (-1.179,0.248) (-1.169,0.250) (-1.158,0.251) (-1.147,0.252) (-1.137,0.253) (-1.126,0.254) (-1.116,0.255) (-1.105,0.257) (-1.094,0.258) (-1.084,0.259) (-1.073,0.260) (-1.062,0.261) (-1.052,0.262) (-1.041,0.263) (-1.031,0.264) (-1.020,0.265) (-1.009,0.266) (-0.999,0.267) (-0.988,0.269) (-0.977,0.270) (-0.967,0.271) (-0.956,0.272) (-0.946,0.273) (-0.935,0.274) (-0.924,0.275) (-0.914,0.276) (-0.903,0.277) (-0.893,0.278) (-0.882,0.279) (-0.871,0.280) (-0.861,0.281) (-0.850,0.282) (-0.839,0.283) (-0.829,0.284) (-0.818,0.285) (-0.808,0.285) (-0.797,0.286) (-0.786,0.287) (-0.776,0.288) (-0.765,0.289) (-0.754,0.290) (-0.744,0.291) (-0.733,0.292) (-0.723,0.293) (-0.712,0.293) (-0.701,0.294) (-0.691,0.295) (-0.680,0.296) (-0.669,0.297) (-0.659,0.298) (-0.648,0.298) (-0.638,0.299) (-0.627,0.300) (-0.616,0.301) (-0.606,0.302) (-0.595,0.303) (-0.584,0.305) (-0.574,0.306) (-0.563,0.308) (-0.552,0.310) (-0.542,0.312) (-0.531,0.316) (-0.521,0.319) (-0.510,0.324) (-0.499,0.330) (-0.489,0.338) (-0.478,0.347) (-0.467,0.358) (-0.457,0.371) (-0.446,0.386) (-0.436,0.405) (-0.425,0.427) (-0.414,0.452) (-0.404,0.481) (-0.393,0.514) (-0.383,0.550) (-0.372,0.590) (-0.361,0.633) (-0.351,0.680) (-0.340,0.729) (-0.329,0.779) (-0.319,0.831) (-0.308,0.883) (-0.298,0.933) (-0.287,0.982) (-0.276,1.027) (-0.266,1.067) (-0.255,1.102) (-0.244,1.130) (-0.234,1.151) (-0.223,1.164) (-0.213,1.168) (-0.202,1.165) (-0.191,1.152) (-0.181,1.132) (-0.170,1.105) (-0.159,1.072) (-0.149,1.032) (-0.138,0.989) (-0.128,0.943) (-0.117,0.895) (-0.106,0.846) (-0.096,0.798) (-0.085,0.752) (-0.074,0.709) (-0.064,0.670) (-0.053,0.635) (-0.043,0.606) (-0.032,0.583) (-0.021,0.566) (-0.011,0.556) (0.000,0.553) (0.011,0.556) (0.021,0.566) (0.032,0.583) (0.043,0.606) (0.053,0.635) (0.064,0.670) (0.074,0.709) (0.085,0.752) (0.096,0.798) (0.106,0.846) (0.117,0.895) (0.128,0.943) (0.138,0.989) (0.149,1.032) (0.159,1.072) (0.170,1.105) (0.181,1.132) (0.191,1.152) (0.202,1.165) (0.212,1.168) (0.223,1.164) (0.234,1.151) (0.244,1.130) (0.255,1.102) (0.266,1.067) (0.276,1.027) (0.287,0.982) (0.297,0.933) (0.308,0.883) (0.319,0.831) (0.329,0.779) (0.340,0.729) (0.351,0.680) (0.361,0.633) (0.372,0.590) (0.382,0.550) (0.393,0.514) (0.404,0.481) (0.414,0.452) (0.425,0.427) (0.436,0.405) (0.446,0.386) (0.457,0.371) (0.467,0.358) (0.478,0.347) (0.489,0.338) (0.499,0.330) (0.510,0.324) (0.521,0.319) (0.531,0.316) (0.542,0.312) (0.552,0.310) (0.563,0.308) (0.574,0.306) (0.584,0.305) (0.595,0.303) (0.606,0.302) (0.616,0.301) (0.627,0.300) (0.638,0.299) (0.648,0.298) (0.659,0.298) (0.669,0.297) (0.680,0.296) (0.691,0.295) (0.701,0.294) (0.712,0.293) (0.722,0.293) (0.733,0.292) (0.744,0.291) (0.754,0.290) (0.765,0.289) (0.776,0.288) (0.786,0.287) (0.797,0.286) (0.807,0.285) (0.818,0.285) (0.829,0.284) (0.839,0.283) (0.850,0.282) (0.861,0.281) (0.871,0.280) (0.882,0.279) (0.892,0.278) (0.903,0.277) (0.914,0.276) (0.924,0.275) (0.935,0.274) (0.946,0.273) (0.956,0.272) (0.967,0.271) (0.977,0.270) (0.988,0.269) (0.999,0.267) (1.009,0.266) (1.020,0.265) (1.031,0.264) (1.041,0.263) (1.052,0.262) (1.062,0.261) (1.073,0.260) (1.084,0.259) (1.094,0.258) (1.105,0.257) (1.116,0.255) (1.126,0.254) (1.137,0.253) (1.147,0.252) (1.158,0.251) (1.169,0.250) (1.179,0.248) (1.190,0.247) (1.201,0.246) (1.211,0.245) (1.222,0.244) (1.233,0.243) (1.243,0.241) (1.254,0.240) (1.264,0.239) (1.275,0.238) (1.286,0.237) (1.296,0.235) (1.307,0.234) (1.318,0.233) (1.328,0.232) (1.339,0.231) (1.349,0.229) (1.360,0.228) (1.371,0.227) (1.381,0.226) (1.392,0.224) (1.403,0.223) (1.413,0.222) (1.424,0.221) (1.434,0.219) (1.445,0.218) (1.456,0.217) (1.466,0.216) (1.477,0.214) (1.488,0.213) (1.498,0.212) (1.509,0.211) (1.519,0.209) (1.530,0.208) (1.541,0.207) (1.551,0.206) (1.562,0.204) (1.572,0.203) (1.583,0.202) (1.594,0.201) (1.604,0.199) (1.615,0.198) (1.626,0.197) (1.636,0.196) (1.647,0.194) (1.657,0.193) (1.668,0.192) (1.679,0.191) (1.689,0.189) (1.700,0.188) (1.711,0.187) (1.721,0.186) (1.732,0.184) (1.742,0.183) (1.753,0.182) (1.764,0.181) (1.774,0.180) (1.785,0.178) (1.796,0.177) (1.806,0.176) (1.817,0.175) (1.827,0.173) (1.838,0.172) (1.849,0.171) (1.859,0.170) (1.870,0.168) (1.881,0.167) (1.891,0.166) (1.902,0.165) (1.912,0.164) (1.923,0.162) (1.934,0.161) (1.944,0.160) (1.955,0.159) (1.966,0.158) (1.976,0.156) (1.987,0.155) (1.997,0.154) (2.008,0.153) (2.019,0.152) (2.029,0.151) (2.040,0.149) (2.051,0.148) (2.061,0.147) (2.072,0.146) (2.083,0.145) (2.093,0.144) (2.104,0.142) (2.114,0.141) (2.125,0.140) (2.136,0.139) (2.146,0.138) (2.157,0.137) (2.167,0.136) (2.178,0.135) (2.189,0.133) (2.199,0.132) (2.210,0.131) (2.221,0.130) (2.231,0.129) (2.242,0.128) (2.252,0.127) (2.263,0.126) (2.274,0.125) (2.284,0.124) (2.295,0.123) (2.306,0.122) (2.316,0.121) (2.327,0.119) (2.338,0.118) (2.348,0.117) (2.359,0.116) (2.369,0.115) (2.380,0.114) (2.391,0.113) (2.401,0.112) (2.412,0.111) (2.423,0.110) (2.433,0.109) (2.444,0.108) (2.454,0.107) (2.465,0.106) (2.476,0.105) (2.486,0.104) (2.497,0.103) (2.508,0.102) (2.518,0.102) (2.529,0.101) (2.539,0.100) (2.550,0.099) (2.561,0.098) (2.571,0.097) (2.582,0.096) (2.592,0.095) (2.603,0.094) (2.614,0.093) (2.624,0.092) (2.635,0.091) (2.646,0.091) (2.656,0.090) (2.667,0.089) (2.677,0.088) (2.688,0.087) (2.699,0.086) (2.709,0.085) (2.720,0.085) (2.731,0.084) (2.741,0.083) (2.752,0.082) (2.762,0.081) (2.773,0.081) (2.784,0.080) (2.794,0.079) (2.805,0.078) (2.816,0.077) (2.826,0.077) (2.837,0.076) (2.847,0.075) (2.858,0.074) (2.869,0.073) (2.879,0.073) (2.890,0.072) (2.901,0.071) (2.911,0.070) (2.922,0.070) (2.932,0.069) (2.943,0.068) (2.954,0.068) (2.964,0.067) (2.975,0.066) (2.986,0.066) (2.996,0.065) (3.007,0.064) (3.018,0.063) (3.028,0.063) (3.039,0.062) (3.049,0.061) (3.060,0.061) (3.071,0.060) (3.081,0.060) (3.092,0.059) (3.103,0.058) (3.113,0.058) (3.124,0.057) (3.134,0.056) (3.145,0.056) (3.156,0.055) (3.166,0.055) (3.177,0.054) (3.188,0.053) (3.198,0.053) (3.209,0.052) (3.219,0.052) (3.230,0.051) (3.241,0.050) (3.251,0.050) (3.262,0.049) (3.272,0.049) (3.283,0.048) (3.294,0.048) (3.304,0.047) (3.315,0.047) (3.326,0.046) (3.336,0.046) (3.347,0.045) (3.357,0.045) (3.368,0.044) (3.379,0.044) (3.389,0.043) (3.400,0.043)};
 
  \fill[gray!35, fill opacity=0.5]
    (-3.400,0) -- plot coordinates {(-3.400,0.042) (-3.389,0.043) (-3.379,0.043) (-3.368,0.044) (-3.357,0.044) (-3.347,0.045) (-3.336,0.045) (-3.326,0.046) (-3.315,0.046) (-3.304,0.047) (-3.294,0.047) (-3.283,0.048) (-3.272,0.048) (-3.262,0.049) (-3.251,0.049) (-3.241,0.050) (-3.230,0.051) (-3.219,0.051) (-3.209,0.052) (-3.198,0.052) (-3.188,0.053) (-3.177,0.053) (-3.166,0.054) (-3.156,0.055) (-3.145,0.055) (-3.134,0.056) (-3.124,0.057) (-3.113,0.057) (-3.103,0.058) (-3.092,0.058) (-3.081,0.059) (-3.071,0.060) (-3.060,0.060) (-3.049,0.061) (-3.039,0.062) (-3.028,0.062) (-3.018,0.063) (-3.007,0.064) (-2.996,0.064) (-2.986,0.065) (-2.975,0.066) (-2.964,0.066) (-2.954,0.067) (-2.943,0.068) (-2.933,0.069) (-2.922,0.069) (-2.911,0.070) (-2.901,0.071) (-2.890,0.071) (-2.879,0.072) (-2.869,0.073) (-2.858,0.074) (-2.848,0.074) (-2.837,0.075) (-2.826,0.076) (-2.816,0.077) (-2.805,0.078) (-2.794,0.078) (-2.784,0.079) (-2.773,0.080) (-2.763,0.081) (-2.752,0.082) (-2.741,0.082) (-2.731,0.083) (-2.720,0.084) (-2.709,0.085) (-2.699,0.086) (-2.688,0.087) (-2.678,0.087) (-2.667,0.088) (-2.656,0.089) (-2.646,0.090) (-2.635,0.091) (-2.624,0.092) (-2.614,0.093) (-2.603,0.094) (-2.593,0.095) (-2.582,0.095) (-2.571,0.096) (-2.561,0.097) (-2.550,0.098) (-2.539,0.099) (-2.529,0.100) (-2.518,0.101) (-2.508,0.102) (-2.497,0.103) (-2.486,0.104) (-2.476,0.105) (-2.465,0.106) (-2.454,0.107) (-2.444,0.108) (-2.433,0.109) (-2.423,0.110) (-2.412,0.111) (-2.401,0.112) (-2.391,0.113) (-2.380,0.114) (-2.369,0.115) (-2.359,0.116) (-2.348,0.117) (-2.338,0.118) (-2.327,0.119) (-2.316,0.120) (-2.306,0.121) (-2.295,0.122) (-2.284,0.123) (-2.274,0.124) (-2.263,0.125) (-2.252,0.126) (-2.242,0.127) (-2.231,0.129) (-2.221,0.130) (-2.210,0.131) (-2.199,0.132) (-2.189,0.133) (-2.178,0.134) (-2.167,0.135) (-2.157,0.136) (-2.146,0.137) (-2.136,0.139) (-2.125,0.140) (-2.114,0.141) (-2.104,0.142) (-2.093,0.143) (-2.083,0.144) (-2.072,0.145) (-2.061,0.147) (-2.051,0.148) (-2.040,0.149) (-2.029,0.150) (-2.019,0.151) (-2.008,0.153) (-1.998,0.154) (-1.987,0.155) (-1.976,0.156) (-1.966,0.157) (-1.955,0.158) (-1.944,0.160) (-1.934,0.161) (-1.923,0.162) (-1.913,0.163) (-1.902,0.165) (-1.891,0.166) (-1.881,0.167) (-1.870,0.168) (-1.859,0.169) (-1.849,0.171) (-1.838,0.172) (-1.828,0.173) (-1.817,0.174) (-1.806,0.176) (-1.796,0.177) (-1.785,0.178) (-1.774,0.179) (-1.764,0.181) (-1.753,0.182) (-1.743,0.183) (-1.732,0.184) (-1.721,0.186) (-1.711,0.187) (-1.700,0.188) (-1.689,0.189) (-1.679,0.191) (-1.668,0.192) (-1.657,0.193) (-1.647,0.194) (-1.636,0.196) (-1.626,0.197) (-1.615,0.198) (-1.604,0.199) (-1.594,0.201) (-1.583,0.202) (-1.573,0.203) (-1.562,0.204) (-1.551,0.206) (-1.541,0.207) (-1.530,0.208) (-1.519,0.209) (-1.509,0.211) (-1.498,0.212) (-1.488,0.213) (-1.477,0.215) (-1.466,0.216) (-1.456,0.217) (-1.445,0.218) (-1.434,0.220) (-1.424,0.221) (-1.413,0.222) (-1.403,0.223) (-1.392,0.225) (-1.381,0.226) (-1.371,0.227) (-1.360,0.228) (-1.349,0.229) (-1.339,0.231) (-1.328,0.232) (-1.318,0.233) (-1.307,0.234) (-1.296,0.236) (-1.286,0.237) (-1.275,0.238) (-1.264,0.239) (-1.254,0.240) (-1.243,0.242) (-1.233,0.243) (-1.222,0.244) (-1.211,0.245) (-1.201,0.246) (-1.190,0.248) (-1.179,0.249) (-1.169,0.250) (-1.158,0.251) (-1.147,0.252) (-1.137,0.253) (-1.126,0.255) (-1.116,0.256) (-1.105,0.257) (-1.094,0.258) (-1.084,0.259) (-1.073,0.260) (-1.062,0.261) (-1.052,0.263) (-1.041,0.264) (-1.031,0.265) (-1.020,0.266) (-1.009,0.267) (-0.999,0.268) (-0.988,0.269) (-0.977,0.270) (-0.967,0.271) (-0.956,0.272) (-0.946,0.273) (-0.935,0.274) (-0.924,0.275) (-0.914,0.276) (-0.903,0.277) (-0.893,0.278) (-0.882,0.279) (-0.871,0.280) (-0.861,0.281) (-0.850,0.282) (-0.839,0.283) (-0.829,0.284) (-0.818,0.285) (-0.808,0.286) (-0.797,0.287) (-0.786,0.288) (-0.776,0.289) (-0.765,0.290) (-0.754,0.291) (-0.744,0.292) (-0.733,0.293) (-0.723,0.293) (-0.712,0.294) (-0.701,0.295) (-0.691,0.296) (-0.680,0.297) (-0.669,0.298) (-0.659,0.298) (-0.648,0.299) (-0.638,0.300) (-0.627,0.301) (-0.616,0.301) (-0.606,0.302) (-0.595,0.303) (-0.584,0.304) (-0.574,0.304) (-0.563,0.305) (-0.552,0.306) (-0.542,0.307) (-0.531,0.307) (-0.521,0.308) (-0.510,0.308) (-0.499,0.309) (-0.489,0.310) (-0.478,0.310) (-0.467,0.311) (-0.457,0.312) (-0.446,0.312) (-0.436,0.313) (-0.425,0.314) (-0.414,0.315) (-0.404,0.316) (-0.393,0.317) (-0.383,0.318) (-0.372,0.319) (-0.361,0.321) (-0.351,0.324) (-0.340,0.327) (-0.329,0.331) (-0.319,0.337) (-0.308,0.344) (-0.298,0.352) (-0.287,0.363) (-0.276,0.377) (-0.266,0.394) (-0.255,0.415) (-0.244,0.441) (-0.234,0.472) (-0.223,0.508) (-0.213,0.551) (-0.202,0.601) (-0.191,0.658) (-0.181,0.722) (-0.170,0.794) (-0.159,0.873) (-0.149,0.959) (-0.138,1.051) (-0.128,1.149) (-0.117,1.249) (-0.106,1.352) (-0.096,1.455) (-0.085,1.555) (-0.074,1.651) (-0.064,1.740) (-0.053,1.820) (-0.043,1.889) (-0.032,1.945) (-0.021,1.986) (-0.011,2.011) (0.000,2.020) (0.011,2.011) (0.021,1.986) (0.032,1.945) (0.043,1.889) (0.053,1.820) (0.064,1.740) (0.074,1.651) (0.085,1.555) (0.096,1.455) (0.106,1.352) (0.117,1.249) (0.128,1.149) (0.138,1.051) (0.149,0.959) (0.159,0.873) (0.170,0.794) (0.181,0.722) (0.191,0.658) (0.202,0.601) (0.212,0.551) (0.223,0.508) (0.234,0.472) (0.244,0.441) (0.255,0.415) (0.266,0.394) (0.276,0.377) (0.287,0.363) (0.297,0.352) (0.308,0.344) (0.319,0.337) (0.329,0.331) (0.340,0.327) (0.351,0.324) (0.361,0.321) (0.372,0.319) (0.382,0.318) (0.393,0.317) (0.404,0.316) (0.414,0.315) (0.425,0.314) (0.436,0.313) (0.446,0.312) (0.457,0.312) (0.467,0.311) (0.478,0.310) (0.489,0.310) (0.499,0.309) (0.510,0.308) (0.521,0.308) (0.531,0.307) (0.542,0.307) (0.552,0.306) (0.563,0.305) (0.574,0.304) (0.584,0.304) (0.595,0.303) (0.606,0.302) (0.616,0.301) (0.627,0.301) (0.638,0.300) (0.648,0.299) (0.659,0.298) (0.669,0.298) (0.680,0.297) (0.691,0.296) (0.701,0.295) (0.712,0.294) (0.722,0.293) (0.733,0.293) (0.744,0.292) (0.754,0.291) (0.765,0.290) (0.776,0.289) (0.786,0.288) (0.797,0.287) (0.807,0.286) (0.818,0.285) (0.829,0.284) (0.839,0.283) (0.850,0.282) (0.861,0.281) (0.871,0.280) (0.882,0.279) (0.892,0.278) (0.903,0.277) (0.914,0.276) (0.924,0.275) (0.935,0.274) (0.946,0.273) (0.956,0.272) (0.967,0.271) (0.977,0.270) (0.988,0.269) (0.999,0.268) (1.009,0.267) (1.020,0.266) (1.031,0.265) (1.041,0.264) (1.052,0.263) (1.062,0.261) (1.073,0.260) (1.084,0.259) (1.094,0.258) (1.105,0.257) (1.116,0.256) (1.126,0.255) (1.137,0.253) (1.147,0.252) (1.158,0.251) (1.169,0.250) (1.179,0.249) (1.190,0.248) (1.201,0.246) (1.211,0.245) (1.222,0.244) (1.233,0.243) (1.243,0.242) (1.254,0.240) (1.264,0.239) (1.275,0.238) (1.286,0.237) (1.296,0.236) (1.307,0.234) (1.318,0.233) (1.328,0.232) (1.339,0.231) (1.349,0.229) (1.360,0.228) (1.371,0.227) (1.381,0.226) (1.392,0.225) (1.403,0.223) (1.413,0.222) (1.424,0.221) (1.434,0.220) (1.445,0.218) (1.456,0.217) (1.466,0.216) (1.477,0.215) (1.488,0.213) (1.498,0.212) (1.509,0.211) (1.519,0.209) (1.530,0.208) (1.541,0.207) (1.551,0.206) (1.562,0.204) (1.572,0.203) (1.583,0.202) (1.594,0.201) (1.604,0.199) (1.615,0.198) (1.626,0.197) (1.636,0.196) (1.647,0.194) (1.657,0.193) (1.668,0.192) (1.679,0.191) (1.689,0.189) (1.700,0.188) (1.711,0.187) (1.721,0.186) (1.732,0.184) (1.742,0.183) (1.753,0.182) (1.764,0.181) (1.774,0.179) (1.785,0.178) (1.796,0.177) (1.806,0.176) (1.817,0.174) (1.827,0.173) (1.838,0.172) (1.849,0.171) (1.859,0.169) (1.870,0.168) (1.881,0.167) (1.891,0.166) (1.902,0.165) (1.912,0.163) (1.923,0.162) (1.934,0.161) (1.944,0.160) (1.955,0.158) (1.966,0.157) (1.976,0.156) (1.987,0.155) (1.997,0.154) (2.008,0.153) (2.019,0.151) (2.029,0.150) (2.040,0.149) (2.051,0.148) (2.061,0.147) (2.072,0.145) (2.083,0.144) (2.093,0.143) (2.104,0.142) (2.114,0.141) (2.125,0.140) (2.136,0.139) (2.146,0.137) (2.157,0.136) (2.167,0.135) (2.178,0.134) (2.189,0.133) (2.199,0.132) (2.210,0.131) (2.221,0.130) (2.231,0.129) (2.242,0.127) (2.252,0.126) (2.263,0.125) (2.274,0.124) (2.284,0.123) (2.295,0.122) (2.306,0.121) (2.316,0.120) (2.327,0.119) (2.338,0.118) (2.348,0.117) (2.359,0.116) (2.369,0.115) (2.380,0.114) (2.391,0.113) (2.401,0.112) (2.412,0.111) (2.423,0.110) (2.433,0.109) (2.444,0.108) (2.454,0.107) (2.465,0.106) (2.476,0.105) (2.486,0.104) (2.497,0.103) (2.508,0.102) (2.518,0.101) (2.529,0.100) (2.539,0.099) (2.550,0.098) (2.561,0.097) (2.571,0.096) (2.582,0.095) (2.592,0.095) (2.603,0.094) (2.614,0.093) (2.624,0.092) (2.635,0.091) (2.646,0.090) (2.656,0.089) (2.667,0.088) (2.677,0.087) (2.688,0.087) (2.699,0.086) (2.709,0.085) (2.720,0.084) (2.731,0.083) (2.741,0.082) (2.752,0.082) (2.762,0.081) (2.773,0.080) (2.784,0.079) (2.794,0.078) (2.805,0.078) (2.816,0.077) (2.826,0.076) (2.837,0.075) (2.847,0.074) (2.858,0.074) (2.869,0.073) (2.879,0.072) (2.890,0.071) (2.901,0.071) (2.911,0.070) (2.922,0.069) (2.932,0.069) (2.943,0.068) (2.954,0.067) (2.964,0.066) (2.975,0.066) (2.986,0.065) (2.996,0.064) (3.007,0.064) (3.018,0.063) (3.028,0.062) (3.039,0.062) (3.049,0.061) (3.060,0.060) (3.071,0.060) (3.081,0.059) (3.092,0.058) (3.103,0.058) (3.113,0.057) (3.124,0.057) (3.134,0.056) (3.145,0.055) (3.156,0.055) (3.166,0.054) (3.177,0.053) (3.188,0.053) (3.198,0.052) (3.209,0.052) (3.219,0.051) (3.230,0.051) (3.241,0.050) (3.251,0.049) (3.262,0.049) (3.272,0.048) (3.283,0.048) (3.294,0.047) (3.304,0.047) (3.315,0.046) (3.326,0.046) (3.336,0.045) (3.347,0.045) (3.357,0.044) (3.368,0.044) (3.379,0.043) (3.389,0.043) (3.400,0.042)} -- (3.400,0) -- cycle;
  \draw[gray!90, thin] plot coordinates {(-3.400,0.042) (-3.389,0.043) (-3.379,0.043) (-3.368,0.044) (-3.357,0.044) (-3.347,0.045) (-3.336,0.045) (-3.326,0.046) (-3.315,0.046) (-3.304,0.047) (-3.294,0.047) (-3.283,0.048) (-3.272,0.048) (-3.262,0.049) (-3.251,0.049) (-3.241,0.050) (-3.230,0.051) (-3.219,0.051) (-3.209,0.052) (-3.198,0.052) (-3.188,0.053) (-3.177,0.053) (-3.166,0.054) (-3.156,0.055) (-3.145,0.055) (-3.134,0.056) (-3.124,0.057) (-3.113,0.057) (-3.103,0.058) (-3.092,0.058) (-3.081,0.059) (-3.071,0.060) (-3.060,0.060) (-3.049,0.061) (-3.039,0.062) (-3.028,0.062) (-3.018,0.063) (-3.007,0.064) (-2.996,0.064) (-2.986,0.065) (-2.975,0.066) (-2.964,0.066) (-2.954,0.067) (-2.943,0.068) (-2.933,0.069) (-2.922,0.069) (-2.911,0.070) (-2.901,0.071) (-2.890,0.071) (-2.879,0.072) (-2.869,0.073) (-2.858,0.074) (-2.848,0.074) (-2.837,0.075) (-2.826,0.076) (-2.816,0.077) (-2.805,0.078) (-2.794,0.078) (-2.784,0.079) (-2.773,0.080) (-2.763,0.081) (-2.752,0.082) (-2.741,0.082) (-2.731,0.083) (-2.720,0.084) (-2.709,0.085) (-2.699,0.086) (-2.688,0.087) (-2.678,0.087) (-2.667,0.088) (-2.656,0.089) (-2.646,0.090) (-2.635,0.091) (-2.624,0.092) (-2.614,0.093) (-2.603,0.094) (-2.593,0.095) (-2.582,0.095) (-2.571,0.096) (-2.561,0.097) (-2.550,0.098) (-2.539,0.099) (-2.529,0.100) (-2.518,0.101) (-2.508,0.102) (-2.497,0.103) (-2.486,0.104) (-2.476,0.105) (-2.465,0.106) (-2.454,0.107) (-2.444,0.108) (-2.433,0.109) (-2.423,0.110) (-2.412,0.111) (-2.401,0.112) (-2.391,0.113) (-2.380,0.114) (-2.369,0.115) (-2.359,0.116) (-2.348,0.117) (-2.338,0.118) (-2.327,0.119) (-2.316,0.120) (-2.306,0.121) (-2.295,0.122) (-2.284,0.123) (-2.274,0.124) (-2.263,0.125) (-2.252,0.126) (-2.242,0.127) (-2.231,0.129) (-2.221,0.130) (-2.210,0.131) (-2.199,0.132) (-2.189,0.133) (-2.178,0.134) (-2.167,0.135) (-2.157,0.136) (-2.146,0.137) (-2.136,0.139) (-2.125,0.140) (-2.114,0.141) (-2.104,0.142) (-2.093,0.143) (-2.083,0.144) (-2.072,0.145) (-2.061,0.147) (-2.051,0.148) (-2.040,0.149) (-2.029,0.150) (-2.019,0.151) (-2.008,0.153) (-1.998,0.154) (-1.987,0.155) (-1.976,0.156) (-1.966,0.157) (-1.955,0.158) (-1.944,0.160) (-1.934,0.161) (-1.923,0.162) (-1.913,0.163) (-1.902,0.165) (-1.891,0.166) (-1.881,0.167) (-1.870,0.168) (-1.859,0.169) (-1.849,0.171) (-1.838,0.172) (-1.828,0.173) (-1.817,0.174) (-1.806,0.176) (-1.796,0.177) (-1.785,0.178) (-1.774,0.179) (-1.764,0.181) (-1.753,0.182) (-1.743,0.183) (-1.732,0.184) (-1.721,0.186) (-1.711,0.187) (-1.700,0.188) (-1.689,0.189) (-1.679,0.191) (-1.668,0.192) (-1.657,0.193) (-1.647,0.194) (-1.636,0.196) (-1.626,0.197) (-1.615,0.198) (-1.604,0.199) (-1.594,0.201) (-1.583,0.202) (-1.573,0.203) (-1.562,0.204) (-1.551,0.206) (-1.541,0.207) (-1.530,0.208) (-1.519,0.209) (-1.509,0.211) (-1.498,0.212) (-1.488,0.213) (-1.477,0.215) (-1.466,0.216) (-1.456,0.217) (-1.445,0.218) (-1.434,0.220) (-1.424,0.221) (-1.413,0.222) (-1.403,0.223) (-1.392,0.225) (-1.381,0.226) (-1.371,0.227) (-1.360,0.228) (-1.349,0.229) (-1.339,0.231) (-1.328,0.232) (-1.318,0.233) (-1.307,0.234) (-1.296,0.236) (-1.286,0.237) (-1.275,0.238) (-1.264,0.239) (-1.254,0.240) (-1.243,0.242) (-1.233,0.243) (-1.222,0.244) (-1.211,0.245) (-1.201,0.246) (-1.190,0.248) (-1.179,0.249) (-1.169,0.250) (-1.158,0.251) (-1.147,0.252) (-1.137,0.253) (-1.126,0.255) (-1.116,0.256) (-1.105,0.257) (-1.094,0.258) (-1.084,0.259) (-1.073,0.260) (-1.062,0.261) (-1.052,0.263) (-1.041,0.264) (-1.031,0.265) (-1.020,0.266) (-1.009,0.267) (-0.999,0.268) (-0.988,0.269) (-0.977,0.270) (-0.967,0.271) (-0.956,0.272) (-0.946,0.273) (-0.935,0.274) (-0.924,0.275) (-0.914,0.276) (-0.903,0.277) (-0.893,0.278) (-0.882,0.279) (-0.871,0.280) (-0.861,0.281) (-0.850,0.282) (-0.839,0.283) (-0.829,0.284) (-0.818,0.285) (-0.808,0.286) (-0.797,0.287) (-0.786,0.288) (-0.776,0.289) (-0.765,0.290) (-0.754,0.291) (-0.744,0.292) (-0.733,0.293) (-0.723,0.293) (-0.712,0.294) (-0.701,0.295) (-0.691,0.296) (-0.680,0.297) (-0.669,0.298) (-0.659,0.298) (-0.648,0.299) (-0.638,0.300) (-0.627,0.301) (-0.616,0.301) (-0.606,0.302) (-0.595,0.303) (-0.584,0.304) (-0.574,0.304) (-0.563,0.305) (-0.552,0.306) (-0.542,0.307) (-0.531,0.307) (-0.521,0.308) (-0.510,0.308) (-0.499,0.309) (-0.489,0.310) (-0.478,0.310) (-0.467,0.311) (-0.457,0.312) (-0.446,0.312) (-0.436,0.313) (-0.425,0.314) (-0.414,0.315) (-0.404,0.316) (-0.393,0.317) (-0.383,0.318) (-0.372,0.319) (-0.361,0.321) (-0.351,0.324) (-0.340,0.327) (-0.329,0.331) (-0.319,0.337) (-0.308,0.344) (-0.298,0.352) (-0.287,0.363) (-0.276,0.377) (-0.266,0.394) (-0.255,0.415) (-0.244,0.441) (-0.234,0.472) (-0.223,0.508) (-0.213,0.551) (-0.202,0.601) (-0.191,0.658) (-0.181,0.722) (-0.170,0.794) (-0.159,0.873) (-0.149,0.959) (-0.138,1.051) (-0.128,1.149) (-0.117,1.249) (-0.106,1.352) (-0.096,1.455) (-0.085,1.555) (-0.074,1.651) (-0.064,1.740) (-0.053,1.820) (-0.043,1.889) (-0.032,1.945) (-0.021,1.986) (-0.011,2.011) (0.000,2.020) (0.011,2.011) (0.021,1.986) (0.032,1.945) (0.043,1.889) (0.053,1.820) (0.064,1.740) (0.074,1.651) (0.085,1.555) (0.096,1.455) (0.106,1.352) (0.117,1.249) (0.128,1.149) (0.138,1.051) (0.149,0.959) (0.159,0.873) (0.170,0.794) (0.181,0.722) (0.191,0.658) (0.202,0.601) (0.212,0.551) (0.223,0.508) (0.234,0.472) (0.244,0.441) (0.255,0.415) (0.266,0.394) (0.276,0.377) (0.287,0.363) (0.297,0.352) (0.308,0.344) (0.319,0.337) (0.329,0.331) (0.340,0.327) (0.351,0.324) (0.361,0.321) (0.372,0.319) (0.382,0.318) (0.393,0.317) (0.404,0.316) (0.414,0.315) (0.425,0.314) (0.436,0.313) (0.446,0.312) (0.457,0.312) (0.467,0.311) (0.478,0.310) (0.489,0.310) (0.499,0.309) (0.510,0.308) (0.521,0.308) (0.531,0.307) (0.542,0.307) (0.552,0.306) (0.563,0.305) (0.574,0.304) (0.584,0.304) (0.595,0.303) (0.606,0.302) (0.616,0.301) (0.627,0.301) (0.638,0.300) (0.648,0.299) (0.659,0.298) (0.669,0.298) (0.680,0.297) (0.691,0.296) (0.701,0.295) (0.712,0.294) (0.722,0.293) (0.733,0.293) (0.744,0.292) (0.754,0.291) (0.765,0.290) (0.776,0.289) (0.786,0.288) (0.797,0.287) (0.807,0.286) (0.818,0.285) (0.829,0.284) (0.839,0.283) (0.850,0.282) (0.861,0.281) (0.871,0.280) (0.882,0.279) (0.892,0.278) (0.903,0.277) (0.914,0.276) (0.924,0.275) (0.935,0.274) (0.946,0.273) (0.956,0.272) (0.967,0.271) (0.977,0.270) (0.988,0.269) (0.999,0.268) (1.009,0.267) (1.020,0.266) (1.031,0.265) (1.041,0.264) (1.052,0.263) (1.062,0.261) (1.073,0.260) (1.084,0.259) (1.094,0.258) (1.105,0.257) (1.116,0.256) (1.126,0.255) (1.137,0.253) (1.147,0.252) (1.158,0.251) (1.169,0.250) (1.179,0.249) (1.190,0.248) (1.201,0.246) (1.211,0.245) (1.222,0.244) (1.233,0.243) (1.243,0.242) (1.254,0.240) (1.264,0.239) (1.275,0.238) (1.286,0.237) (1.296,0.236) (1.307,0.234) (1.318,0.233) (1.328,0.232) (1.339,0.231) (1.349,0.229) (1.360,0.228) (1.371,0.227) (1.381,0.226) (1.392,0.225) (1.403,0.223) (1.413,0.222) (1.424,0.221) (1.434,0.220) (1.445,0.218) (1.456,0.217) (1.466,0.216) (1.477,0.215) (1.488,0.213) (1.498,0.212) (1.509,0.211) (1.519,0.209) (1.530,0.208) (1.541,0.207) (1.551,0.206) (1.562,0.204) (1.572,0.203) (1.583,0.202) (1.594,0.201) (1.604,0.199) (1.615,0.198) (1.626,0.197) (1.636,0.196) (1.647,0.194) (1.657,0.193) (1.668,0.192) (1.679,0.191) (1.689,0.189) (1.700,0.188) (1.711,0.187) (1.721,0.186) (1.732,0.184) (1.742,0.183) (1.753,0.182) (1.764,0.181) (1.774,0.179) (1.785,0.178) (1.796,0.177) (1.806,0.176) (1.817,0.174) (1.827,0.173) (1.838,0.172) (1.849,0.171) (1.859,0.169) (1.870,0.168) (1.881,0.167) (1.891,0.166) (1.902,0.165) (1.912,0.163) (1.923,0.162) (1.934,0.161) (1.944,0.160) (1.955,0.158) (1.966,0.157) (1.976,0.156) (1.987,0.155) (1.997,0.154) (2.008,0.153) (2.019,0.151) (2.029,0.150) (2.040,0.149) (2.051,0.148) (2.061,0.147) (2.072,0.145) (2.083,0.144) (2.093,0.143) (2.104,0.142) (2.114,0.141) (2.125,0.140) (2.136,0.139) (2.146,0.137) (2.157,0.136) (2.167,0.135) (2.178,0.134) (2.189,0.133) (2.199,0.132) (2.210,0.131) (2.221,0.130) (2.231,0.129) (2.242,0.127) (2.252,0.126) (2.263,0.125) (2.274,0.124) (2.284,0.123) (2.295,0.122) (2.306,0.121) (2.316,0.120) (2.327,0.119) (2.338,0.118) (2.348,0.117) (2.359,0.116) (2.369,0.115) (2.380,0.114) (2.391,0.113) (2.401,0.112) (2.412,0.111) (2.423,0.110) (2.433,0.109) (2.444,0.108) (2.454,0.107) (2.465,0.106) (2.476,0.105) (2.486,0.104) (2.497,0.103) (2.508,0.102) (2.518,0.101) (2.529,0.100) (2.539,0.099) (2.550,0.098) (2.561,0.097) (2.571,0.096) (2.582,0.095) (2.592,0.095) (2.603,0.094) (2.614,0.093) (2.624,0.092) (2.635,0.091) (2.646,0.090) (2.656,0.089) (2.667,0.088) (2.677,0.087) (2.688,0.087) (2.699,0.086) (2.709,0.085) (2.720,0.084) (2.731,0.083) (2.741,0.082) (2.752,0.082) (2.762,0.081) (2.773,0.080) (2.784,0.079) (2.794,0.078) (2.805,0.078) (2.816,0.077) (2.826,0.076) (2.837,0.075) (2.847,0.074) (2.858,0.074) (2.869,0.073) (2.879,0.072) (2.890,0.071) (2.901,0.071) (2.911,0.070) (2.922,0.069) (2.932,0.069) (2.943,0.068) (2.954,0.067) (2.964,0.066) (2.975,0.066) (2.986,0.065) (2.996,0.064) (3.007,0.064) (3.018,0.063) (3.028,0.062) (3.039,0.062) (3.049,0.061) (3.060,0.060) (3.071,0.060) (3.081,0.059) (3.092,0.058) (3.103,0.058) (3.113,0.057) (3.124,0.057) (3.134,0.056) (3.145,0.055) (3.156,0.055) (3.166,0.054) (3.177,0.053) (3.188,0.053) (3.198,0.052) (3.209,0.052) (3.219,0.051) (3.230,0.051) (3.241,0.050) (3.251,0.049) (3.262,0.049) (3.272,0.048) (3.283,0.048) (3.294,0.047) (3.304,0.047) (3.315,0.046) (3.326,0.046) (3.336,0.045) (3.347,0.045) (3.357,0.044) (3.368,0.044) (3.379,0.043) (3.389,0.043) (3.400,0.042)};
 
  \node at (0.75,1.95) {$\theta_{\#}\mu$};
  \node at (0.95,1.15) {$\theta_{\#}\nu$};
\end{scope}
 
\draw[->, very thin] (-0.2,-3.3) to[bend right=40] (1.7,-5.6);
\node[left] at (-0.4,-4.3) {$x \mapsto \langle x, \theta\rangle$};
 
\draw[->, very thin] (8.4,-3.3) to[bend left=40] (6.5,-5.6);
\node[right] at (8.6,-4.3) {$x \mapsto \langle x, \theta\rangle$};
 
\end{tikzpicture}